\documentclass[11pt,twoside,a4paper]{article}
\usepackage{amsfonts,amssymb,amsmath,amsthm,mathrsfs,float,multicol,epsfig,setspace,indentfirst,breqn,cite,graphicx,subfigure,verbatim}
\topmargin=-10.4mm \oddsidemargin=6mm \evensidemargin=-3mm
\textwidth=155mm \textheight=225mm
\newtheorem{Theorem}{Theorem}[section]
\newtheorem{Lemma}{Lemma}[section]
\newtheorem{Remark}{Remark}[section]
\newtheorem{Definition}{Definition}[section]
\numberwithin{equation}{section}

\begin{document}

\title{Effect of leakage delay on Hopf bifurcation in a fractional BAM neural network}
\author{Jiazhe Lin$^a$,\quad Rui Xu$^{b,c,}$\thanks{Author for Correspondence, e-mail: xur2020meu@163.com, rxu88@163.com.},\quad Liangchen Li$^a$,\quad Xiaohong Tian$^{b,c}$\\
{\small a Institute of Applied Mathematics, Army Engineering University}\\
{\small Shijiazhuang 050003, Hebei, P.R. China}\\
{\small b Complex Systems Research Center, Shanxi University}\\
{\small Taiyuan 030006, Shanxi, P.R. China}\\
{\small c Shanxi Key Laboratory of Mathematical Techniques and Big Data Analysis on}\\
{\small Disease Control and Prevention, Shanxi University}\\
{\small Taiyuan 030006, Shanxi, P.R. China}\\}
\date{}
\maketitle
\noindent
\begin{minipage}{140mm}
{\bf Abstract:} Recently, the influence of leakage delay on the dynamics of integer-order neural networks has been investigated extensively. It has been confirmed that fractional calculus can depict the memory and hereditary attributes of neural networks more accurately. In this paper, we study the existence of Hopf bifurcation in a six-neuron fractional bidirectional associative memory (BAM) neural network with leakage delay. By selecting two appropriate bifurcation parameters and analyzing corresponding characteristic equations, it is verified that the delayed fractional neural network generates a Hopf bifurcation when the bifurcation parameters pass through some critical values. In order to measure how much impact of leakage delay on Hopf bifurcation, sensitivity analysis methods, such as scatter plots and partial rank correlation coefficients (PRCCs), are introduced to assess the sensitivity of bifurcation amplitudes to leakage delay. Numerical examples are carried out to illustrate the theoretical results and help us gain an insight into the effect of leakage delay vividly.  \\
{\bf Keywords:} Leakage delay; Hopf bifurcation; fractional order; BAM neural network; sensitivity analysis.
\end{minipage}

\section{Introduction}

In the past few decades, BAM neural networks as well as their various generalizations have attracted the attention of many researchers due to their potential applications in parallel computation, associative memory and nonlinear optimization problems (see, e.g. \cite{ZPZ,XTL,BZ,L,LXZ,XZ,WYX,YYC}, and the references cited therein). The BAM neural networks are originally proposed by Kosko \cite{K}, which are composed of neurons arranged in two layers and described by the following system
\begin{equation}\label{101}
\left\{
\begin{aligned}
&{{\dot x}_i}(t) =  - {a_i}{x_i}(t) + \sum\nolimits_{j = 1}^p {{u_{ji}}} {f_i}({y_j}(t)) + {I_i},\quad i = 1,2,\cdots,q, \\
&{{\dot y}_j}(t) =  - {b_j}{y_j}(t) + \sum\nolimits_{i = 1}^q {{v_{ij}}} {f_j}({x_i}(t)) + {I_j},\quad j = 1,2,\cdots,p,
\end{aligned}
\right.
\end{equation}
where $u_{ji}$, $v_{ij}$ are the connection weights at the time $t$ through the neurons in two layers: the I-layer and the J-layer, $a_i$ and $b_j$ describe the stability of internal neuron processes on the I-layer and the J-layer, respectively. On the I-layer, the neurons whose states are denoted by $x_i(t)$ receive the inputs $I_i$ and the inputs outputted by
those neurons in the J-layer via activation functions $f_i$, while on the J-layer, the neurons whose associated states denoted by $y_j(t)$ receive the inputs $I_j$ and the inputs outputted by those neurons in the I-layer via activation functions $f_j$.

Obviously, the influence of time delays on the dynamics of BAM neural networks has been not taken into account in system \eqref{101}. Due to the finite speed of signal transmission and amplifiers switching, time delay inevitably exists in neural networks. On one hand, time delays are harmful to the dynamical behavior of considered neural networks, causing oscillation, even chaos. For instance, Xu et al. \cite{XTL} investigated a six-neuron BAM neural network model with communication delays and illustrated that the model undergoes a Hopf bifurcation in some cases. On the other hand, a proper time delay is advantageous for the dynamical behavior of neural networks, such as, it can improve the stability performance of networks (see, e.g. \cite{XHC,XZC,ZW}). Meanwhile, the leakage term in each of the right sides of \eqref{101} corresponds to a stabilizing negative feedback of the system which acts instantaneously without time delay. In practice, time is required to isolate the static state. Gopalsamy \cite{C} found that time delays in leakage terms have an important influence on the dynamic behavior, and the stability of the neural networks was determined by leakage delays. The mathematical model is described by
\begin{equation}\label{102}
\left\{
\begin{aligned}
&{{\dot x}_i}(t) =  - {a_i}{x_i}(t - {\tau _i}) + \sum\nolimits_{j = 1}^p {{u_{ji}}} {f_i}({y_j}(t - {\tau _{ji}})) + {I_i},\quad i = 1,2, \cdots ,q, \\
&{{\dot y}_j}(t) =  - {b_j}{y_j}(t - {\tau _j}) + \sum\nolimits_{i = 1}^q {{v_{ij}}} {f_j}({x_i}(t - {\sigma _{ij}})) + {I_j},\quad j = 1,2, \cdots ,p,
\end{aligned}
\right.
\end{equation}
where $\tau_i$ and $\tau_j$ denote the leakage delay, respectively; $\tau_{ji}$ and $\sigma_{ij}$ represent the communication delay, respectively. Similarly, Tian and Xu \cite{TX} illustrated that time delays in the stabilizing negative feedback terms will have a tendency to destabilize a system.

In recent years, fraction-order derivatives provide an excellent tool for the description of memory and hereditary properties of various materials and processes. In general, plenty of practical objects can be described clearly by the fractional differential equations, due to their more degrees of freedom and infinite memory. Hence, the research of dynamical analysis of fractional neural networks has gained a lot of attention and some valuable results have been referred to \cite{HMC,HCX,HC,TXS,HCX2}. As we know, the integer-order calculus can only determine the local features of neural networks, while the fractional calculus can depict the memory and hereditary attributes more accurately \cite{A,LHS}.

In \cite{HMC}, Huang et al. studied the stability and bifurcation of four-neuron fractional BAM neural networks with time delay in leakage terms and illustrated that the leakage delay has a destabilizing influence on the stability performance, which cannot be ignored. Furthermore, Huang and Cao \cite{HC} concentrated on the issue of bifurcation analysis for high-order fractional BAM neural networks involving leakage delay and obtain similar bifurcation analysis results. In \cite{TXS}, Tao et al. proposed a fractional two-gene regulatory network model with delays and found that the delayed fractional genetic network can generate a Hopf bifurcation when the total delay passes through some critical values.

However, it is noteworthy that some of above results about stability and bifurcation for the fractional systems are defective on the analysis methods or relatively few neurons in both two layers in BAM neural networks. When the number of neurons in BAM neural network \eqref{102} is larger, the simplified system \eqref{201} can reflect the really large neural networks more closely. Motivated by Xu et al. \cite{XTL} and Huang et al. \cite{HMC,HC}, the primary objective of this paper is to study the existence of Hopf bifurcation for the following six-neuron fractional BAM neural network involving leakage delay
\begin{small}
\begin{equation}\label{201}
\left\{
\begin{aligned}
&{D^\theta }{x_1}(t) =  - {k_1}{x_1}(t - {\tau _1}) + {m_{11}}{f_{11}}({y_1}(t - {\tau _2})) + {m_{12}}{f_{12}}({y_2}(t - {\tau _2})) + {m_{13}}{f_{13}}({y_3}(t - {\tau _2})), \\
&{D^\theta }{x_2}(t) =  - {k_2}{x_2}(t - {\tau _1}) + {m_{21}}{f_{21}}({y_1}(t - {\tau _2})) + {m_{22}}{f_{22}}({y_2}(t - {\tau _2})) + {m_{23}}{f_{23}}({y_3}(t - {\tau _2})), \\
&{D^\theta }{x_3}(t) =  - {k_3}{x_3}(t - {\tau _1}) + {m_{31}}{f_{31}}({y_1}(t - {\tau _2})) + {m_{32}}{f_{32}}({y_2}(t - {\tau _2})) + {m_{33}}{f_{33}}({y_3}(t - {\tau _2})), \\
&{D^\theta }{y_1}(t) =  - {k_4}{y_1}(t - {\tau _1}) + {n_{11}}{g_{11}}({x_1}(t - {\tau _2})) + {n_{12}}{g_{12}}({x_2}(t - {\tau _2})) + {n_{13}}{g_{13}}({x_3}(t - {\tau _2})), \\
&{D^\theta }{y_2}(t) =  - {k_5}{y_2}(t - {\tau _1}) + {n_{21}}{g_{21}}({x_1}(t - {\tau _2})) + {n_{22}}{g_{22}}({x_2}(t - {\tau _2})) + {n_{23}}{g_{23}}({x_3}(t - {\tau _2})), \\
&{D^\theta }{y_3}(t) =  - {k_6}{y_3}(t - {\tau _1}) + {n_{31}}{g_{31}}({x_1}(t - {\tau _2})) + {n_{32}}{g_{32}}({x_2}(t - {\tau _2})) + {n_{33}}{g_{33}}({x_3}(t - {\tau _2})),
\end{aligned}
\right.
\end{equation}
\end{small}
where $\theta \in (0,1]$ is the fractional order, $x_i(t)$, $y_i(t)$ $(i=1,2,3)$ stand for state variables of the $i$th neuron in I-layer and J-layer, respectively; $k_l>0$ $(l=1,2,3,4,5,6)$ describes the stability of internal neuron processes on the I-layer and the J-layer, respectively; $m_{ij}$, $n_{ij}$ $(i,j=1,2,3)$ represent connection weights; $f_{ij}(\cdot)$, $g_{ij}(\cdot)$ denote activation functions; $\tau_1$ is leakage delay, $\tau_2$ is communication delay.

Above discussion may raise the question that, for a given fractional BAM neural network, which one contributes more to the influence of dynamical behavior, leakage delay or communication delay? In order to assess how much impact of leakage delay on Hopf bifurcation, we introduce sensitivity analysis into this paper. Sensitivity analysis (SA) assesses how variations in model outputs can be apportioned, qualitatively or quantitatively, to different input sources \cite{S,SCS}. Through the process of recalculating outcomes under alternative assumptions, we can determine the impact of a variable by scatter plots and PRCCs. Sensitivity analysis provides an evaluation of how much each input is contributing to the output uncertainty and performs the role of ordering by importance, namely, by the strength and relevance of the inputs in determining the variations in the output \cite{SR}.

The main contributions of this paper can be summarized as follows:

$\bullet$ The problem of stability and bifurcation in a six-neuron fractional BAM neural network with leakage delay is discussed by introducing two appropriate bifurcation parameters, and the existence of Hopf bifurcation for the proposed network is established.

$\bullet$ The impact of the order on the critical frequencies and bifurcation points is further demonstrated numerically for the fractional BAM neural network.

$\bullet$ Sensitivity analysis is introduced into bifurcation analysis of the fractional BAM neural networks, which helps us to estimate whether leakage delay or communication delay contributes more to the influence of dynamical behavior for a given fractional BAM neural network.

The outline of the paper is listed as follows: In Section 2, the stability of the equilibrium and the existence of Hopf bifurcation are studied. In Section 3, numerical simulations are carried out to illustrate the validity of the main results. Sensitivity analysis of leakage delay and communication delay is performed in Section 4. A brief remark is given in Section 5 to conclude this work.

\section{Main results for bifurcation analysis}

In this section, we introduce two appropriate bifurcation parameters. Through analyzing corresponding characteristic equations, we obtain the local stability of the equilibrium and the existence of Hopf bifurcation to system \eqref{201}. Before bifurcation analysis, we address the following assumption.

{\bf (H1)} $f_{ij},g_{ij} \in C(R,R)$, $f_{ij}(0)=g_{ij}(0)=0$, $xf_{ij}(x)>0$, $xg_{ij}(x)>0$ $(i,j=1,2,3)$ for $x\neq0$.

There are several definitions of fractional derivatives. Riemann-Liouville definition and Caputo definition are commonly used. Since Caputo derivative only requires initial conditions given by means of integer-order derivative, representing well-understood features of physical situation and making it more applicable to real world problems \cite{HMC,HC}. Hence, the definition of Caputo derivative is adopted in this paper.

\begin{Definition}\label{def202}
{\rm \cite{P}} For a function $f(t) \in C^n([t_0,\infty),R)$, Caputo fractional derivative of order $\theta$ is defined by
\[{D^\theta }f(t) = \frac{1}{{\Gamma (n - \theta )}}\int_{{t_0}}^t{\frac{{{f^{(n)}}(s)}}{{{{(t - s)}^{\theta  - n + 1}}}}ds} ,\]
where $t\ge {t_0}$, and $n$ is a positive integer such that $n-1\le\theta<n$.

Moreover, when $0<\theta<1$,
\[{D^\theta }f(t) = \frac{1}{{\Gamma (1 - \theta )}}\int_{{t_0}}^t {\frac{{f'(s)}}{{{{(t - s)}^\theta }}}ds}.\]
\end{Definition}

It is obvious that the origin is an equilibrium point of system \eqref{201} under Assumption {\bf (H1)}. Linearizing system \eqref{201} at the origin, we obtain that
\begin{equation}\label{301}
\left\{
\begin{aligned}
&{D^\theta }{x_1}(t) =  - {k_1}{x_1}(t - {\tau _1}) + {\phi _{11}}{y_1}(t - {\tau _2}) + {\phi _{12}}{y_2}(t - {\tau _2}) + {\phi _{13}}{y_3}(t - {\tau _2}), \\
&{D^\theta }{x_2}(t) =  - {k_2}{x_2}(t - {\tau _1}) + {\phi _{21}}{y_1}(t - {\tau _2}) + {\phi _{22}}{y_2}(t - {\tau _2}) + {\phi _{23}}{y_3}(t - {\tau _2}), \\
&{D^\theta }{x_3}(t) =  - {k_3}{x_3}(t - {\tau _1}) + {\phi _{31}}{y_1}(t - {\tau _2}) + {\phi _{32}}{y_2}(t - {\tau _2}) + {\phi _{33}}{y_3}(t - {\tau _2}), \\
&{D^\theta }{y_1}(t) =  - {k_4}{y_1}(t - {\tau _1}) + {\varphi _{11}}{x_1}(t - {\tau _2}) + {\varphi _{12}}{x_2}(t - {\tau _2}) + {\varphi _{13}}{x_3}(t - {\tau _2}), \\
&{D^\theta }{y_2}(t) =  - {k_5}{y_2}(t - {\tau _1}) + {\varphi _{21}}{x_1}(t - {\tau _2}) + {\varphi _{22}}{x_2}(t - {\tau _2}) + {\varphi _{23}}{x_3}(t - {\tau _2}), \\
&{D^\theta }{y_3}(t) =  - {k_6}{y_3}(t - {\tau _1}) + {\varphi _{31}}{x_1}(t - {\tau _2}) + {\varphi _{32}}{x_2}(t - {\tau _2}) + {\varphi _{33}}{x_3}(t - {\tau _2}),
\end{aligned}
\right.
\end{equation}
where ${\phi _{ij}} = {m_{ij}}{f'_{ij}}(0)$, ${\varphi _{ij}} = {n_{ij}}{g'_{ij}}(0)$ $(i,j=1,2,3)$. By applying Laplace transformation, the characteristic matrix of system \eqref{301} is
\begin{small}
\begin{align*}
\det \left|
\begin{array}{*{20}{c}}
   {{\lambda ^\theta } + {k_1}{e^{ - \lambda{\tau _1}}}} & 0 & 0 & { - {\phi _{11}}{e^{ - \lambda{\tau _2}}}} & { - {\phi _{12}}{e^{ - \lambda{\tau _2}}}} & { - {\phi _{13}}{e^{ - \lambda{\tau _2}}}}  \\
   0 & {{\lambda ^\theta } + {k_2}{e^{ - \lambda{\tau _1}}}} & 0 & { - {\phi _{21}}{e^{ - \lambda{\tau _2}}}} & { - {\phi _{22}}{e^{ - \lambda{\tau _2}}}} & { - {\phi _{23}}{e^{ - \lambda{\tau _2}}}}  \\
   0 & 0 & {{\lambda ^\theta } + {k_3}{e^{ - \lambda{\tau _1}}}} & { - {\phi _{31}}{e^{ - \lambda{\tau _2}}}} & { - {\phi _{32}}{e^{ - \lambda{\tau _2}}}} & { - {\phi _{33}}{e^{ - \lambda{\tau _2}}}}  \\
   { - {\varphi _{11}}{e^{ - \lambda{\tau _2}}}} & { - {\varphi _{12}}{e^{ - \lambda{\tau _2}}}} & { - {\varphi _{13}}{e^{ - \lambda{\tau _2}}}} & {{\lambda ^\theta } + {k_4}{e^{ - \lambda{\tau _1}}}} & 0 & 0  \\
   { - {\varphi _{21}}{e^{ - \lambda{\tau _2}}}} & { - {\varphi _{22}}{e^{ - \lambda{\tau _2}}}} & { - {\varphi _{23}}{e^{ - \lambda{\tau _2}}}} & 0 & {{\lambda ^\theta } + {k_5}{e^{ - \lambda{\tau _1}}}} & 0  \\
   { - {\varphi _{31}}{e^{ - \lambda{\tau _2}}}} & { - {\varphi _{32}}{e^{ - \lambda{\tau _2}}}} & { - {\varphi _{33}}{e^{ - \lambda{\tau _2}}}} & 0 & 0 & {{\lambda ^\theta } + {k_6}{e^{ - \lambda{\tau _1}}}}  \\
\end{array}
\right| = 0.
\end{align*}
\end{small}
Thus, we obtain the characteristic equation of system \eqref{301} as follows
\begin{equation}\label{302}
\begin{aligned}
{\lambda ^{6\theta }} &+ {c_{11}}{e^{ - \lambda{\tau _1}}}{\lambda ^{5\theta }} + \left( {{c_{21}}{e^{ - 2\lambda{\tau _1}}} - {c_{22}}{e^{ - 2\lambda{\tau _2}}}} \right){\lambda ^{4\theta }} + \left( {{c_{31}}{e^{ - 3\lambda{\tau _1}}} - {c_{32}}{e^{ - \lambda{\tau _1}}}{e^{ - 2\lambda{\tau _2}}}} \right){\lambda ^{3\theta }} \\
&+ \left[ {{c_{41}}{e^{ - 4\lambda{\tau _1}}} - {c_{42}}{e^{ - 2\lambda{\tau _1}}}{e^{ - 2\lambda{\tau _2}}} + \left( {{c_{43}} - {c_{44}}} \right){e^{ - 4\lambda{\tau _2}}}} \right]{\lambda ^{2\theta }} \\
&+ \left[ {{c_{51}}{e^{ - 5\lambda{\tau _1}}} - {c_{52}}{e^{ - 3\lambda{\tau _1}}}{e^{ - 2\lambda{\tau _2}}} + \left( {{c_{53}} - {c_{54}}} \right){e^{ - \lambda{\tau _1}}}{e^{ - 4\lambda{\tau _2}}}} \right]{\lambda ^\theta } \\
&+ \left[ {{c_{61}}{e^{ - 6\lambda{\tau _1}}} - {c_{62}}{e^{ - 4\lambda{\tau _1}}}{e^{ - 2\lambda{\tau _2}}} + \left( {{c_{63}} - {c_{64}}} \right){e^{ - 2\lambda{\tau _1}}}{e^{ - 4\lambda{\tau _2}}} - {c_{65}}{e^{ - 6\lambda{\tau _2}}}} \right] = 0,
\end{aligned}
\end{equation}
where  positive constants $c_{11}$, $c_{21}$, $c_{22}$, $c_{31}$, $c_{32}$, $c_{41}$, $c_{42}$, $c_{43}$, $c_{44}$, $c_{51}$, $c_{52}$, $c_{53}$, $c_{54}$, $c_{61}$, $c_{62}$, $c_{63}$, $c_{64}$, $c_{65}$ are defined in Appendix A.

Define ${\tau _3} = \left( {{\tau _1} + {\tau _2}} \right)/2, {\tau _4} = \left( {{\tau _1} - {\tau _2}} \right)/2$, which are selected as bifurcation parameters. If $\tau_3=0$ and $\tau_4=0$, Eq. \eqref{302} is changed into
\[{({{\lambda ^\theta }})^6} + {d_1}{({{\lambda ^\theta }})^5} + {d_2}{({{\lambda ^\theta }})^4} + {d_3}{({{\lambda ^\theta }})^3} + {d_4}{({{\lambda ^\theta }})^2} + {d_5}{\lambda ^\theta } + {d_6} = 0,\]
where
\begin{align*}
&{d_1} = {c_{11}},\quad {d_2} = {c_{21}} - {c_{22}},\quad {d_3} = {c_{31}} - {c_{32}},\quad{d_4} = {c_{41}} - {c_{42}} + {c_{43}} - {c_{44}},  \\
&{d_5} = {c_{51}} - {c_{52}} + {c_{53}} - {c_{54}},\quad {d_6} = {c_{61}} - {c_{62}} + {c_{63}} - {c_{64}} - {c_{65}}.
\end{align*}
The following lemma about the stability of fractional autonomous systems is listed for later analysis.
\begin{Lemma}\label{lem301}
{\rm \cite{M,DLL}} The following autonomous system
\[{D^\theta }x = Jx,\quad x(0) = {x_0},\]
where $0<\theta<1$, $x\in R^n$, $J\in R^{n\times n}$, this system is asymptotically stable if and only if $\left| {\arg ({\lambda _i})} \right| > \theta \pi /2$ $(i = 1,2,\cdots,n)$. In this case, each component of the states decays towards $0$ like $t^{-\theta}$. Also, this system is stable if and only if $\left| {\arg ({\lambda _i})} \right| \ge \theta \pi /2$ and those critical eigenvalues that satisfy $\left| {\arg ({\lambda _i})} \right|=\theta \pi /2$ have geometric multiplicity one.
\end{Lemma}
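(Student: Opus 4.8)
The plan is to derive the explicit solution of the linear Caputo system through the matrix Mittag-Leffler function and then read off stability from the large-time asymptotics of that function. First I would apply the Laplace transform to $D^\theta x = Jx$ with $x(0)=x_0$, using the Caputo transform rule $\mathcal{L}\{D^\theta x\}(s) = s^\theta X(s) - s^{\theta-1}x_0$ valid for $0<\theta<1$. This gives $(s^\theta I - J)X(s) = s^{\theta-1}x_0$, hence $X(s) = (s^\theta I - J)^{-1}s^{\theta-1}x_0$. Inverting, and using $\mathcal{L}^{-1}\{s^{\theta-1}/(s^\theta-\lambda)\}(t) = E_\theta(\lambda t^\theta)$, yields the closed form $x(t) = E_\theta(Jt^\theta)x_0$, where $E_\theta(z) = \sum_{k\ge 0} z^k/\Gamma(\theta k + 1)$ is the Mittag-Leffler function and $E_\theta(Jt^\theta)$ is defined by the same series with matrix argument.

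Next I would bring $J$ to Jordan canonical form $J = P^{-1}\Lambda P$. Since $E_\theta$ acts on each Jordan block separately, the behavior of $x(t)$ is governed by the scalar quantities $E_\theta(\lambda_i t^\theta)$, together with their $\lambda$-derivatives whenever a block has size larger than one. The entire problem thus reduces to understanding $E_\theta(\lambda t^\theta)$ as $t\to\infty$ for a single eigenvalue $\lambda$.

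The heart of the argument is the classical asymptotic expansion of $E_\theta$. For $0<\theta<1$ and any $\mu$ with $\theta\pi/2 < \mu < \theta\pi$, one has, as $|z|\to\infty$, two regimes: if $\mu \le |\arg z| \le \pi$ then $E_\theta(z) = -\sum_{k=1}^{N} z^{-k}/\Gamma(1-\theta k) + O(|z|^{-N-1})$, whereas if $|\arg z| \le \mu$ an extra term $\theta^{-1}z^{(1-\theta)/\theta}\exp(z^{1/\theta})$ appears. Substituting $z = \lambda t^\theta$, so that $\arg z = \arg\lambda$ and $z^{1/\theta} = \lambda^{1/\theta}t$, I would split into three cases. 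If $|\arg\lambda| > \theta\pi/2$, only the algebraically decaying tail survives and $E_\theta(\lambda t^\theta) = O(t^{-\theta})$, giving asymptotic stability with the stated $t^{-\theta}$ decay rate. If $|\arg\lambda| < \theta\pi/2$, then $\mathrm{Re}(\lambda^{1/\theta}) > 0$ and the exponential term forces unbounded growth, so the system is unstable.

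The step I expect to be the main obstacle is the critical boundary case $|\arg\lambda| = \theta\pi/2$, where $\lambda^{1/\theta}$ is purely imaginary and $\exp(\lambda^{1/\theta}t)$ merely oscillates. Here the scalar factor $E_\theta(\lambda t^\theta)$ stays bounded, so a semisimple critical eigenvalue yields boundedness (stability without asymptotic stability); but if such an eigenvalue sits in a nontrivial Jordan block, the $\lambda$-derivatives of $E_\theta$ multiplying the off-diagonal entries introduce polynomial-in-$t$ factors against a non-decaying oscillation, producing growth. Carefully quantifying these derivative terms and matching them to the geometric-multiplicity-one condition on critical eigenvalues is the delicate part; the remaining cases follow directly from the asymptotics above.
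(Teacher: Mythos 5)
The paper does not actually prove this lemma: it is imported verbatim as a known result of Matignon \cite{M} and Deng, Li and Lv \cite{DLL}, so there is no internal proof to compare yours against. What you have written is, in substance, the classical argument that underlies those citations: Laplace transform of the Caputo derivative to obtain $x(t)=E_\theta(Jt^\theta)x_0$, reduction to Jordan blocks, and the two-regime asymptotic expansion of the Mittag--Leffler function to separate the sector $|\arg\lambda|<\theta\pi/2$ (exponential growth), the exterior $|\arg\lambda|>\theta\pi/2$ (algebraic decay, with the leading term $-z^{-1}/\Gamma(1-\theta)$ giving exactly the stated $t^{-\theta}$ rate), and the critical rays. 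The overall structure is sound. Two points to tidy up. First, for the one-parameter function $E_\theta=E_{\theta,1}$ the exponential term in the expansion is $\theta^{-1}\exp(z^{1/\theta})$ with \emph{no} power prefactor (the general factor $z^{(1-\beta)/\theta}$ equals $1$ at $\beta=1$); with the prefactor $z^{(1-\theta)/\theta}$ you wrote, the critical case $|\arg\lambda|=\theta\pi/2$ would spuriously appear to grow like $t^{1-\theta}$, so the correct prefactor matters precisely in the delicate boundary case you flag, and your stated conclusion there (boundedness for semisimple critical eigenvalues) silently relies on it. Second, the ``only if'' directions and the degenerate eigenvalue $\lambda=0$ (for which $\arg\lambda$ is undefined) each deserve an explicit sentence. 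Your identification of the hard step --- that a critical eigenvalue in a nontrivial Jordan block contributes $\lambda$-derivatives of $E_\theta(\lambda t^\theta)$, hence polynomial-in-$t$ factors against a purely oscillatory exponential --- is exactly how the geometric-multiplicity condition enters in Matignon's original proof, so completing that computation would close the argument.
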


\begin{Theorem}\label{thm301}
System \eqref{201} is asymptotically stable when $\tau_3=\tau_4=0$ and $D_i>0$ $(i=1,2,3,4,5,6)$ hold, where $D_i$ is defined as follows
\end{Theorem}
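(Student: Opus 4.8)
The plan is to exploit the fact that the hypothesis $\tau_3=\tau_4=0$ is equivalent to $\tau_1=\tau_2=0$, so that the transcendental characteristic equation \eqref{302} collapses to the ordinary polynomial displayed just before the statement. First I would set $s=\lambda^\theta$ and write that polynomial as $P(s)=s^6+d_1s^5+d_2s^4+d_3s^3+d_4s^2+d_5s+d_6$. Since with vanishing delays the linearization \eqref{301} is an autonomous fractional system $D^\theta X=JX$ with constant Jacobian $J$, the roots of $P(s)$ are precisely the eigenvalues of $J$, and Lemma \ref{lem301} tells us that \eqref{201} is asymptotically stable exactly when every such root $s$ obeys $|\arg(s)|>\theta\pi/2$.

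Next I would introduce the $D_i$ as the successive Hurwitz determinants built from the coefficients $d_1,\dots,d_6$ (with the convention $d_k=0$ for $k>6$); thus $D_1=d_1$, $D_2=d_1d_2-d_3$, and so on up to the full determinant $D_6$ of the $6\times 6$ Hurwitz matrix. By the Routh--Hurwitz criterion applied to the monic polynomial $P$, the assumption $D_i>0$ for $i=1,\dots,6$ is equivalent to the statement that all six roots of $P(s)$ lie in the open left half-plane, i.e. $\mathrm{Re}(s)<0$ for each root.

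Finally I would convert this into the sector condition required by Lemma \ref{lem301}. A root with $\mathrm{Re}(s)<0$ satisfies $|\arg(s)|>\pi/2$, and since $\theta\in(0,1]$ we have $\theta\pi/2\le\pi/2<|\arg(s)|$; hence every eigenvalue of $J$ is forced into the stability sector with strict inequality, and Lemma \ref{lem301} yields the asymptotic stability of system \eqref{201}.

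I expect the argument to be essentially routine, with no analytic difficulty once the delay terms have disappeared. The only genuinely structural point is the observation that the fractional stability sector $|\arg|>\theta\pi/2$ is strictly wider than the integer-order one $|\arg|>\pi/2$ whenever $\theta<1$, so that the classical Routh--Hurwitz conclusion (negative real parts of the $s$-roots) is more than sufficient. The main obstacle is therefore purely bookkeeping: writing out the six Hurwitz determinants $D_i$ correctly in terms of $d_1,\dots,d_6$, which in turn are the combinations of the $c_{ij}$ recorded in Appendix A.
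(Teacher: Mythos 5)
Your proposal is correct and follows essentially the same route as the paper: reduce to the delay-free polynomial in $s=\lambda^{\theta}$, use the positivity of the Hurwitz determinants $D_i$ to place all roots in the open left half-plane, observe that $\mathrm{Re}(s)<0$ forces $|\arg(s)|>\pi/2\ge\theta\pi/2$, and invoke Lemma~\ref{lem301}. The only difference is one of completeness: the paper's proof simply asserts that $D_i>0$ implies $|\arg(\lambda_i)|>\theta\pi/2$, whereas you explicitly supply the Routh--Hurwitz step and the sector comparison that justify this implication.
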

\[\begin{array}{l}
 {D_1} = {d_1},\quad {D_2} = \det \left| {\begin{array}{*{20}{c}}
   {{d_1}} & 1  \\
   {{d_3}} & {{d_2}}  \\
\end{array}} \right|,\quad {D_3} = \det \left| {\begin{array}{*{20}{c}}
   {{d_1}} & 1 & 0  \\
   {{d_3}} & {{d_2}} & {{d_1}}  \\
   {{d_5}} & {{d_4}} & {{d_3}}  \\
\end{array}} \right|, \\
 {D_4} = \det \left| {\begin{array}{*{20}{c}}
   {{d_1}} & 1 & 0 & 0  \\
   {{d_3}} & {{d_2}} & {{d_1}} & 1  \\
   {{d_5}} & {{d_4}} & {{d_3}} & {{d_2}}  \\
   0 & {{d_6}} & {{d_5}} & {{d_4}}  \\
\end{array}} \right|,\quad {D_5} = \det \left| {\begin{array}{*{20}{c}}
   {{d_1}} & 1 & 0 & 0 & 0  \\
   {{d_3}} & {{d_2}} & {{d_1}} & 1 & 0  \\
   {{d_5}} & {{d_4}} & {{d_3}} & {{d_2}} & {{d_1}}  \\
   0 & {{d_6}} & {{d_5}} & {{d_4}} & {{d_3}}  \\
   0 & 0 & 0 & {{d_6}} & {{d_5}}  \\
\end{array}} \right|,\quad {D_6} = {d_6}{D_5}. \\
 \end{array}\]
\begin{proof}
If $D_i>0$ $(i=1,2,3,4,5,6)$ holds, it follows that all the roots $\lambda_i$ satisfy $\left| {\arg ({\lambda _i})} \right| > \theta \pi /2$ $(i = 1,2,3,4,5,6)$. According to Lemma \ref{lem301}, we can easily conclude that system \eqref{201} is asymptotically stable when $\tau_3=\tau_4=0$.
\end{proof}

\subsection{Hopf bifurcation with respect to $\tau_3$}

If $\tau_3\neq0$ and $\tau_4=0$, i.e., $\tau_3=\tau_1=\tau_2$, Eq. \eqref{302} becomes
\begin{equation}\label{303}
\begin{aligned}
{\lambda ^{6\theta }} &+ {c_{11}}{e^{ - \lambda{\tau _3}}}{\lambda ^{5\theta }} + \left( {{c_{21}} - {c_{22}}} \right){e^{ - 2\lambda{\tau _3}}}{\lambda ^{4\theta }} + \left( {{c_{31}} - {c_{32}}} \right){e^{ - 3\lambda{\tau _3}}}{\lambda ^{3\theta }} \\
&+ \left( {{c_{41}} + {c_{43}} - {c_{42}} - {c_{44}}} \right){e^{ - 4\lambda{\tau _3}}}{\lambda ^{2\theta }} + \left( {{c_{51}} + {c_{53}} - {c_{52}} - {c_{54}}} \right){e^{ - 5\lambda{\tau _3}}}{\lambda ^\theta } \\
&+ \left( {{c_{61}} + {c_{63}} - {c_{62}} - {c_{64}} - {c_{65}}} \right){e^{ - 6\lambda{\tau _3}}} = 0.
\end{aligned}
\end{equation}
Multiplying $e^{6\lambda\tau_3}$ on both sides of Eq. \eqref{303} and denoting $s=e^{\lambda\tau_3}\lambda^\theta$, it follows that
\begin{equation}\label{304}
\begin{aligned}
{s^6} &+ {c_{11}}{s^5} + \left( {{c_{21}} - {c_{22}}} \right){s^4} + \left( {{c_{31}} - {c_{32}}} \right){s^3} + \left( {{c_{41}} + {c_{43}} - {c_{42}} - {c_{44}}} \right){s^2} \\
&+ \left( {{c_{51}} + {c_{53}} - {c_{52}} - {c_{54}}} \right)s + \left( {{c_{61}} + {c_{63}} - {c_{62}} - {c_{64}} - {c_{65}}} \right) = 0.
\end{aligned}
\end{equation}
Define the six roots of Eq. \eqref{304} as $s_n = {R_n} + i{I_n}$ $(n = 1,2,3,4,5,6)$, where ${R_n}$ and ${I_n}$ are the real and imaginary parts of $s_n$, respectively. Note that
\begin{equation}\label{305}
{e^{\lambda {\tau _3}}}{\lambda ^\theta } = {s_n}.
\end{equation}
To find possible periodic solutions, which may bifurcate from a Hopf bifurcation point, let $\lambda=i\omega$ be a root of \eqref{305}. Substituting $\lambda=i\omega=\omega(cos\frac{\pi}{2}+isin\frac{\pi}{2})$ into \eqref{305} and separating the real and imaginary parts yields
\begin{equation}\label{306}
\left\{
\begin{aligned}
&{\omega ^\theta }cos\frac{{\theta \pi }}{2}\cos \omega {\tau _3} - {\omega ^\theta }sin\frac{{\theta \pi }}{2}\sin \omega {\tau _3} = {R_n}, \\
&{\omega ^\theta }cos\frac{{\theta \pi }}{2}\sin \omega {\tau _3} + {\omega ^\theta }sin\frac{{\theta \pi }}{2}\cos \omega {\tau _3} = {I_n}.
\end{aligned}
\right.
\end{equation}
From \eqref{306}, it follows that
\[\cos \omega {\tau _3} = \frac{{{R_n}\cos \frac{{\theta \pi }}{2} + {I_n}\sin \frac{{\theta \pi }}{2}}}{{{\omega ^\theta }}},\quad \sin \omega {\tau _3} = \frac{{{I_n}\cos \frac{{\theta \pi }}{2} - {R_n}\sin \frac{{\theta \pi }}{2}}}{{{\omega ^\theta }}}.\]
Noting that ${\sin ^2}\omega {\tau _3} + {\cos ^2}\omega {\tau _3} = 1$, we obtain that
\begin{subequations}\label{307}
\begin{align}
&\omega  = \sqrt[2\theta ]{{{R_n}^2 + {I_n}^2}},\\
&{\tau ^{(k)}} = \frac{1}{\omega }\left[ {\arccos \left( {\frac{{{R_n}\cos \frac{{\theta \pi }}{2} + {I_n}\sin \frac{{\theta \pi }}{2}}}{{{\omega ^\theta }}}} \right) + 2k\pi } \right],\quad k = 0,1,2,\cdots.
\end{align}
\end{subequations}
Define the bifurcation point of system \eqref{201} as follows:
\[{\tau _0} = \min \{ {\tau ^{(k)}}\} ,\quad k = 0,1,2,\cdots.\]
To establish the main results of this section, we make the following assumptions.

\noindent{\bf (H2)} Eq. (\ref{307}a) has no positive real root.

\noindent{\bf (H3)} Eq. (\ref{307}a) has at least one positive real root.

\noindent{\bf (H4)} $\left({{\Phi _1}{\Psi _1} + {\Phi _2}{\Psi _2}}\right)/\left({{\Psi _1}^2 + {\Psi _2}^2}\right)\neq0$, where ${\Phi _i}$, ${\Psi _i}$ $(i=1,2)$ are defined in Appendix B.

\begin{Lemma}\label{lem302}
Let $\lambda (\tau_3) = \mu (\tau_3) + i\omega (\tau_3)$, be the root of Eq. \eqref{303} near $\tau_3=\tau_0$ satisfying $\mu(\tau_0)=0$, $\omega(\tau_0)=\omega_0$, then the following transversality condition holds
\[{\mathop{\rm Re}\nolimits} \left[ {\frac{{d\lambda }}{{d\tau_3}}} \right]\Big| {_{\tau_3 = {\tau _0}}}  \ne 0.\]
\end{Lemma}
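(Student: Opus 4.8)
The plan is to treat $\lambda=\lambda(\tau_3)$ as a branch defined implicitly by the characteristic equation near $\tau_3=\tau_0$ and to show it crosses the imaginary axis with nonzero speed. The key observation I would start from is that, on the branch under consideration, the auxiliary quantity $s=e^{\lambda\tau_3}\lambda^\theta$ introduced before \eqref{304} is pinned to a \emph{fixed} root $s_n=R_n+iI_n$ of the polynomial \eqref{304}, and that polynomial does not depend on $\tau_3$. Hence the branch is governed by relation \eqref{305}, namely $e^{\lambda\tau_3}\lambda^\theta=s_n$ with $s_n$ constant; this is the cleanest object to differentiate, and it is equivalent to differentiating \eqref{303} directly, the two forms being related by the factor $e^{6\lambda\tau_3}$.

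First I would differentiate \eqref{305} implicitly with respect to $\tau_3$. Writing $\lambda'=d\lambda/d\tau_3$ and using $\tfrac{d}{d\tau_3}e^{\lambda\tau_3}=(\lambda+\tau_3\lambda')e^{\lambda\tau_3}$ together with $\tfrac{d}{d\tau_3}\lambda^\theta=\theta\lambda^{\theta-1}\lambda'$, the constancy of the right-hand side forces
\[e^{\lambda\tau_3}\big[(\lambda+\tau_3\lambda')\lambda^\theta+\theta\lambda^{\theta-1}\lambda'\big]=0.\]
Since $e^{\lambda\tau_3}\neq0$, this is linear in $\lambda'$ and can be solved to give a closed quotient $\lambda'=\Phi/\Psi$; one convenient choice is $\Psi=\tau_3\lambda^\theta+\theta\lambda^{\theta-1}$ and $\Phi=-\lambda^{\theta+1}$. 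Evaluating at $\tau_3=\tau_0$, $\lambda=i\omega_0$, and separating real and imaginary parts through the polar form $i\omega_0=\omega_0(\cos\tfrac{\pi}{2}+i\sin\tfrac{\pi}{2})$ produces the real numbers $\Phi_1,\Phi_2,\Psi_1,\Psi_2$ recorded in Appendix B.

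Then, rationalising the quotient $\Phi/\Psi$ yields
\[\mathrm{Re}\Big[\frac{d\lambda}{d\tau_3}\Big]\Big|_{\tau_3=\tau_0}=\frac{\Phi_1\Psi_1+\Phi_2\Psi_2}{\Psi_1^2+\Psi_2^2},\]
which is precisely the quantity assumed nonzero in \textbf{(H4)}, so the transversality condition follows at once. As a check that this hypothesis is not vacuous, the simplified expression above collapses to $d\lambda/d\tau_3=-\lambda^2/(\tau_3\lambda+\theta)$, whose real part at $(i\omega_0,\tau_0)$ equals $\omega_0^2\theta/(\theta^2+\tau_0^2\omega_0^2)>0$; this also shows the sign of the crossing is genuinely positive under \textbf{(H3)}, which guarantees the bifurcating frequency $\omega_0>0$ exists.

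The hard part will not be the algebra but the analytic justification that a differentiable branch $\lambda(\tau_3)$ with $\lambda(\tau_0)=i\omega_0$ exists near $\tau_0$ in the first place: this is an implicit function theorem argument applied to \eqref{303}, whose nondegeneracy hypothesis is exactly $\Psi=\tau_3\lambda^\theta+\theta\lambda^{\theta-1}\neq0$ at the bifurcation point, equivalently $\tau_0\lambda+\theta\neq0$, which holds since $\theta>0$. A secondary technical point I would need to address is the fractional power $\lambda^\theta$: one must fix the principal branch so that $\lambda^\theta$ is holomorphic in a neighbourhood of $\lambda=i\omega_0$ (legitimate because $i\omega_0\neq0$) in order to use $\tfrac{d}{d\tau_3}\lambda^\theta=\theta\lambda^{\theta-1}\lambda'$ and to regard $s=e^{\lambda\tau_3}\lambda^\theta$ as a well-defined smooth function of $\tau_3$ along the branch.
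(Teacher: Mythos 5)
Your proof is correct but follows a genuinely different (and in fact sharper) route than the paper. The paper differentiates the full characteristic equation \eqref{303} implicitly in $\tau_3$, obtains $d\lambda/d\tau_3=\Phi(s)/\Psi(s)$ with the long expressions reproduced in Appendix B, and then simply \emph{assumes} the resulting real part is nonzero via \textbf{(H4)}. You instead exploit the fact that along a continuous root branch the quantity $s=e^{\lambda\tau_3}\lambda^\theta$ is locked to a fixed root $s_n$ of the $\tau_3$-independent polynomial \eqref{304}, and differentiate the scalar relation \eqref{305}; this collapses the computation to $d\lambda/d\tau_3=-\lambda^{2}/(\tau_3\lambda+\theta)$ and yields $\mathrm{Re}[d\lambda/d\tau_3]|_{\tau_3=\tau_0}=\omega_0^{2}\theta/(\theta^{2}+\tau_0^{2}\omega_0^{2})>0$ unconditionally, so your argument actually proves the transversality condition (with a definite sign) rather than postulating it --- it shows \textbf{(H4)} is superfluous for this lemma whenever the relevant $s_n$ is a simple root of \eqref{304}. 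Two caveats: (i) the equivalence of your quotient with the paper's $\Phi/\Psi$ holds because $F=e^{-6\lambda\tau_3}P(e^{\lambda\tau_3}\lambda^\theta)$, so at a zero of $P$ both partial derivatives of $F$ pick up the common factor $e^{-6\lambda\tau_3}P'(s_n)$, which cancels in the ratio only if $P'(s_n)\neq0$; you should state this simplicity assumption explicitly, since at a multiple root the paper's $\Phi/\Psi$ degenerates to $0/0$ while your formula survives. (ii) Your claim that evaluating your $\Phi=-\lambda^{\theta+1}$ and $\Psi=\tau_3\lambda^\theta+\theta\lambda^{\theta-1}$ "produces the real numbers $\Phi_1,\Phi_2,\Psi_1,\Psi_2$ recorded in Appendix B" is not literally correct --- those appendix quantities are the real and imaginary parts of the paper's full $\Phi(s)$ and $\Psi(s)$, which differ from yours by the nonzero factor $e^{-6\lambda\tau_3}P'(s_n)$; only the value of the quotient coincides, not the numerator and denominator separately. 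Your closing remarks on the implicit function theorem and the choice of a holomorphic branch of $\lambda^\theta$ near $i\omega_0\neq0$ address analytic points the paper passes over in silence.
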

\begin{proof}
Based on implicit function theorem, we calculate the derivative of Eq. \eqref{303} with respect to $\tau_3$ as follows
\begin{align*}
6\theta {\lambda ^{6\theta  - 1}}\frac{{d\lambda }}{{d{\tau _3}}} &+ {c_{11}}\left[ {{\lambda ^{5\theta }}{e^{ - \lambda {\tau _3}}}\left( { - {\tau _3}\frac{{d\lambda }}{{d{\tau _3}}} - \lambda } \right) + 5\theta {\lambda ^{5\theta  - 1}}{e^{ - \lambda {\tau _3}}}\frac{{d\lambda }}{{d{\tau _3}}}} \right] \\
&+ \left( {{c_{21}} - {c_{22}}} \right)\left[ {{\lambda ^{4\theta }}{e^{ - 2\lambda {\tau _3}}}\left( { - 2{\tau _3}\frac{{d\lambda }}{{d{\tau _3}}} - 2\lambda } \right) + 4\theta {\lambda ^{4\theta  - 1}}{e^{ - 2\lambda {\tau _3}}}\frac{{d\lambda }}{{d{\tau _3}}}} \right] \\
&+ \left( {{c_{31}} - {c_{32}}} \right)\left[ {{\lambda ^{3\theta }}{e^{ - 3\lambda {\tau _3}}}\left( { - 3{\tau _3}\frac{{d\lambda }}{{d{\tau _3}}} - 3\lambda } \right) + 3\theta {\lambda ^{3\theta  - 1}}{e^{ - 3\lambda {\tau _3}}}\frac{{d\lambda }}{{d{\tau _3}}}} \right] \\
&+ \left( {{c_{41}} + {c_{43}} - {c_{42}} - {c_{44}}} \right)\left[ {{\lambda ^{2\theta }}{e^{ - 4\lambda {\tau _3}}}\left( { - 4{\tau _3}\frac{{d\lambda }}{{d{\tau _3}}} - 4\lambda } \right) + 2\theta {\lambda ^{2\theta  - 1}}{e^{ - 4\lambda {\tau _3}}}\frac{{d\lambda }}{{d{\tau _3}}}} \right] \\
&+ \left( {{c_{51}} + {c_{53}} - {c_{52}} - {c_{54}}} \right)\left[ {{\lambda ^\theta }{e^{ - 5\lambda {\tau _3}}}\left( { - 5{\tau _3}\frac{{d\lambda }}{{d{\tau _3}}} - 5\lambda } \right) + \theta {\lambda ^{\theta  - 1}}{e^{ - 5\lambda {\tau _3}}}\frac{{d\lambda }}{{d{\tau _3}}}} \right] \\
&+ {e^{ - 6\lambda {\tau _3}}}\left( {{c_{61}} + {c_{63}} - {c_{62}} - {c_{64}} - {c_{65}}} \right)\left( { - 6{\tau _3}\frac{{d\lambda }}{{d{\tau _3}}} - 6\lambda } \right) = 0.
\end{align*}
Direct calculation shows that
\[\frac{{d\lambda }}{{d{\tau _3}}} = \frac{{\Phi (s)}}{{\Psi (s)}},\]
where
\begin{align*}
\Phi(s) =& \lambda \left[{c_{11}}{\lambda ^{5\theta }}{e^{ - \lambda {\tau _3}}} + 2\left( {{c_{21}} - {c_{22}}} \right){\lambda ^{4\theta }}{e^{ - 2\lambda {\tau _3}}} + 3\left( {{c_{31}} - {c_{32}}} \right){\lambda ^{3\theta }}{e^{ - 3\lambda {\tau _3}}} \right.\\
&\quad + 4\left( {{c_{41}} + {c_{43}} - {c_{42}} - {c_{44}}} \right){\lambda ^{2\theta }}{e^{ - 4\lambda {\tau _3}}} + 5\left( {{c_{51}} + {c_{53}} - {c_{52}} - {c_{54}}} \right){\lambda ^\theta }{e^{ - 5\lambda {\tau _3}}} \\
&\quad \left.+ 6\left( {{c_{61}} + {c_{63}} - {c_{62}} - {c_{64}} - {c_{65}}} \right){e^{ - 6\lambda {\tau _3}}} \right], \\
\Psi(s) =& 6\theta {\lambda ^{6\theta  - 1}} + {c_{11}}\left( {5\theta {\lambda ^{5\theta  - 1}} - {\tau _3}{\lambda ^{5\theta }}} \right){e^{ - \lambda {\tau _3}}} + \left( {{c_{21}} - {c_{22}}} \right)\left( {4\theta {\lambda ^{4\theta  - 1}} - 2{\tau _3}{\lambda ^{4\theta }}} \right){e^{ - 2\lambda {\tau _3}}} \\
&+ \left( {{c_{31}} - {c_{32}}} \right)\left( {3\theta {\lambda ^{3\theta  - 1}} - 3{\tau _3}{\lambda ^{3\theta }}} \right){e^{ - 3\lambda {\tau _3}}} \\
&+ \left( {{c_{41}} + {c_{43}} - {c_{42}} - {c_{44}}} \right)\left( {2\theta {\lambda ^{2\theta  - 1}} - 4{\tau _3}{\lambda ^{2\theta }}} \right){e^{ - 4\lambda {\tau _3}}} \\
&+ \left( {{c_{51}} + {c_{53}} - {c_{52}} - {c_{54}}} \right)\left( {\theta {\lambda ^{\theta  - 1}} - 5{\tau _3}{\lambda ^\theta }} \right){e^{ - 5\lambda {\tau _3}}} \\
&- 6\left( {{c_{61}} + {c_{63}} - {c_{62}} - {c_{64}} - {c_{65}}} \right){\tau _3}{e^{ - 6\lambda {\tau _3}}}.
\end{align*}
The real part of $d\lambda /d{\tau _3}$ at $\tau_3 = {\tau _0}$ is
\[{\mathop{\rm Re}\nolimits} \left[ {\frac{{d\lambda }}{{d{\tau _3}}}} \right]\Big| {_{\tau_3 = {\tau _0}}}  = \frac{{{\Phi _1}{\Psi _1} + {\Phi _2}{\Psi _2}}}{{{\Psi _1}^2 + {\Psi _2}^2}},\]
where $\Phi_1$ and $\Phi_2$ are the real and imaginary parts of $\Phi(s)$, respectively; $\Psi_1$ and $\Psi_2$ are the real and imaginary parts of $\Psi(s)$, respectively. Based on Assumption {\bf (H4)}, the transversality condition meets. This completes the proof.
\end{proof}
According to Assumptions {\bf (H1)-(H4)}, the following theorem can be derived.
\begin{Theorem}\label{thm302}
For system \eqref{201}, the following results hold

\noindent{\bf (\romannumeral1)} If Assumptions {\bf (H1), (H2)} are satisfied, the zero equilibrium point is global asymptotically stable for $\tau_3 \in [0,\infty)$.

\noindent{\bf (\romannumeral2)} If Assumptions {\bf (H1), (H3), (H4)} and Lemma \ref{lem302} hold,

{\bf a)} The zero equilibrium point is locally asymptotically stable for $\tau_3 \in [0,\tau_0)$;

{\bf b)} system \eqref{201} undergoes a Hopf bifurcation at the origin when $\tau_3=\tau_0$, i.e., it has a branch of periodic solutions bifurcating from the zero equilibrium point near $\tau_3=\tau_0$.
\end{Theorem}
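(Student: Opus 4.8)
The plan is to track how the roots of the transcendental characteristic equation \eqref{303} migrate in the complex plane as the bifurcation parameter $\tau_3$ increases from $0$, using Theorem \ref{thm301} as the anchor at $\tau_3 = 0$ and the purely-imaginary-root analysis \eqref{304}--\eqref{307} to pin down the only places where a stability switch can occur.

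First, for part \textbf{(\romannumeral1)}, I would show that under Assumption \textbf{(H2)} equation \eqref{303} admits no root of the form $\lambda = i\omega$ with $\omega > 0$ for any $\tau_3 \geq 0$. The substitution $s = e^{\lambda\tau_3}\lambda^\theta$ reduces \eqref{303} to the polynomial \eqref{304}, whose six roots $s_n = R_n + iI_n$ are fixed; a purely imaginary $\lambda = i\omega$ would then force the compatibility relation $(\ref{307}a)$, and Assumption \textbf{(H2)} rules out any admissible $\omega$, so no eigenvalue can reach the imaginary axis. Combining this with the asymptotic stability at $\tau_3 = 0$ from Theorem \ref{thm301} and the continuous dependence of the roots on $\tau_3$, no root may cross the stability boundary, so the zero equilibrium stays asymptotically stable for every $\tau_3 \in [0,\infty)$.

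For part \textbf{(\romannumeral2)} \textbf{a)}, under Assumption \textbf{(H3)} there is a positive $\omega_0$ consistent with $(\ref{307}a)$, and $(\ref{307}b)$ generates the sequence $\{\tau^{(k)}\}$ with $\tau_0 = \min\{\tau^{(k)}\}$ the first value at which a conjugate pair $\pm i\omega_0$ becomes a root of \eqref{303}. Starting once more from the stable configuration at $\tau_3 = 0$ and invoking continuity, no root has touched the imaginary axis on $[0,\tau_0)$, so all roots keep $|\arg(\lambda_i)| > \theta\pi/2$ and Lemma \ref{lem301} yields local asymptotic stability there. For part \textbf{(\romannumeral2)} \textbf{b)}, at $\tau_3 = \tau_0$ the pair $\pm i\omega_0$ sits exactly on the imaginary axis; Lemma \ref{lem302}, which under Assumption \textbf{(H4)} gives ${\mathop{\rm Re}\nolimits}[d\lambda/d\tau_3]|_{\tau_3 = \tau_0} \neq 0$, guarantees this pair crosses transversally, and the Hopf bifurcation criterion for fractional delayed systems then produces the bifurcating branch of periodic solutions.

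The step I expect to be the main obstacle is converting the algebraic picture back into a statement about the original transcendental spectrum: I must check that the substitution $s = e^{\lambda\tau_3}\lambda^\theta$ neither introduces spurious purely imaginary roots nor suppresses genuine ones, and that $\tau_0$ really is the first crossing value, i.e. that the branches $\tau^{(k)}$ are correctly ordered and the relevant $s_n$ is selected. Establishing monotone, transversal crossing simultaneously for all six branches $s_n$, rather than for a single isolated root, is the delicate point, and this is precisely where Lemma \ref{lem302} and Assumption \textbf{(H4)} carry the load.
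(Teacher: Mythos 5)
Your proposal is correct and follows essentially the same route the paper takes: the paper gives no explicit proof of Theorem~\ref{thm302}, simply asserting that it "can be derived" from Theorem~\ref{thm301} (stability at $\tau_3=0$), the imaginary-axis analysis in \eqref{304}--\eqref{307}, and the transversality condition of Lemma~\ref{lem302}, which is exactly the root-continuation argument you spell out. The one caveat---inherited from the paper's own statement rather than introduced by you---is that an argument based solely on the characteristic equation of the linearization \eqref{301} can only deliver local, not global, asymptotic stability in part~(i).
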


\subsection{Hopf bifurcation with respect to $\tau_4$}

If $\tau_4\neq0$, fix $\tau_1$ and the characteristic equation \eqref{302} becomes
\begin{equation}\label{311}
{e^{ - 4\lambda {\tau _4}}}{p_1}(\lambda ) + {p_2}(\lambda ) + {e^{4\lambda {\tau _4}}}{p_3}(\lambda ) + {e^{8\lambda {\tau _4}}}{p_4} = 0,
\end{equation}
where
\begin{align*}
{p_1}(\lambda ) =& {e^{6\lambda {\tau _1}}}{\lambda ^{6\theta }} + {c_{11}}{e^{5\lambda {\tau _1}}}{\lambda ^{5\theta }} + {c_{21}}{e^{4\lambda {\tau _1}}}{\lambda ^{4\theta }} + {c_{31}}{e^{3\lambda {\tau _1}}}{\lambda ^{3\theta }} \\
&+ {c_{41}}{e^{2\lambda {\tau _1}}}{\lambda ^{2\theta }} + {c_{51}}{e^{\lambda {\tau _1}}}{\lambda ^\theta } + {c_{61}}, \\
{p_2}(\lambda ) =& - \left( {{c_{22}}{e^{4\lambda {\tau _1}}}{\lambda ^{4\theta }} + {c_{32}}{e^{3\lambda {\tau _1}}}{\lambda ^{3\theta }} + {c_{42}}{e^{2\lambda {\tau _1}}}{\lambda ^{2\theta }} + {c_{52}}{e^{\lambda {\tau _1}}}{\lambda ^\theta } + {c_{62}}} \right), \\
{p_3}(\lambda ) =& \left( {{c_{43}} - {c_{44}}} \right){e^{2\lambda {\tau _1}}}{\lambda ^{2\theta }} + \left( {{c_{53}} - {c_{54}}} \right){e^{\lambda {\tau _1}}}{\lambda ^\theta } + \left( {{c_{63}} - {c_{64}}} \right), \\
{p_4} =&  - {c_{65}}.
\end{align*}
Let $\lambda=i\omega$ and substitute it into Eq. \eqref{311}, it follows that
\begin{equation}\label{312}
\begin{aligned}
&\left( {{a_1} + i{b_1}} \right)\left( {\cos (4\omega {\tau _4}) - i\sin (4\omega {\tau _4}}) \right)+ \left( {{a_2} + i{b_2}} \right) + \left( {{a_3} + i{b_3}} \right)\left( {\cos (4\omega {\tau _4}) + i\sin (4\omega {\tau _4}}) \right) \\
&+ {a_4} \left( {\cos (8\omega {\tau _4}) + i\sin (8\omega {\tau _4}}) \right) = 0,
\end{aligned}
\end{equation}
in which ${a_n} = {\mathop{\rm Re}\nolimits} [{p_n}(i\omega )]$, ${b_n} = {\mathop{\rm Im}\nolimits} [ {p_n}(i\omega )]$, $n = 1,2,3,4$. Separating the real and imaginary parts, we obtain that
\begin{equation}\label{313}
\begin{aligned}
&\left( {{a_1} + {a_3}} \right)\cos (4\omega {\tau _4}) + \left( {{b_1} - {b_3}} \right)\sin (4\omega {\tau _4}) + {a_2} =  - {a_4}\cos (8\omega {\tau _4}), \\
&\left( {{b_1} + {b_3}} \right)\cos (4\omega {\tau _4}) + \left( {{a_3} - {a_1}} \right)\sin (4\omega {\tau _4}) + {b_2} =  - {a_4}\sin (8\omega {\tau _4}).
\end{aligned}
\end{equation}
Squaring both sides of the two equations of \eqref{313}, respectively, and adding them up yields
\begin{equation}\label{314}
\begin{aligned}
&\left[ {\left( {{a_1} + {a_3}} \right)\cos (4\omega {\tau _4}) + \left( {{b_1} - {b_3}} \right)\sin (4\omega {\tau _4}) + {a_2}} \right]^2 \\
&+ \left[ {\left( {{b_1} + {b_3}} \right)\cos (4\omega {\tau _4}) + \left( {{a_3} - {a_1}} \right)\sin (4\omega {\tau _4}) + {b_2}} \right]^2 = {a_4}^2.
\end{aligned}
\end{equation}
Noting that $\sin(4\omega {\tau _4}) =  \pm \sqrt {1 - {{\cos }^2}(4\omega {\tau _4}})$, we consider the two cases:

{\bf (1)} If $\sin(4\omega {\tau _4}) = \sqrt {1 - {{\cos }^2}(4\omega {\tau _4}})$, then Eq. \eqref{314} takes the following form:
\begin{equation}\label{315}
\begin{aligned}
&\left[ {\left( {{a_1} + {a_3}} \right)\cos (4\omega {\tau _4}) + \left( {{b_1} - {b_3}} \right)\sqrt {1 - {{\cos }^2}(4\omega {\tau _4}})  + {a_2}} \right]^2 \\
&+ \left[ {\left( {{b_1} + {b_3}} \right)\cos (4\omega {\tau _4}) + \left( {{a_3} - {a_1}} \right)\sqrt {1 - {{\cos }^2}(4\omega {\tau _4}})  + {b_2}} \right]^2 = {a_4}^2.
\end{aligned}
\end{equation}
It is easy to see that Eq. \eqref{315} is equivalent to
\begin{equation}\label{316}
{q_1}{\cos ^4}(4\omega {\tau _4}) + {q_2}{\cos ^3}(4\omega {\tau _4}) + {q_3}{\cos ^2}(4\omega {\tau _4}) + {q_4}\cos (4\omega {\tau _4}) + {q_5} = 0,
\end{equation}
where
\begin{small}
\begin{align*}
{q_1} =& 16{\left( {{a_1}{a_3} + {b_1}{b_3}} \right)^2} + 16{\left( {{a_3}{b_1} - {a_1}{b_3}} \right)^2}, \\
{q_2} =& 16\left( {{a_1}{a_3} + {b_1}{b_3}} \right)\left[ {{a_2}\left( {{a_1} + {a_3}} \right) + {b_2}\left( {{b_1} + {b_3}} \right)} \right] + 16\left( {{a_3}{b_1} - {a_1}{b_3}} \right)\left[ {{a_2}\left( {{b_1} - {b_3}} \right) + {b_2}\left( {{a_3} - {a_1}} \right)} \right], \\
{q_3} =& 8\left( {{a_1}{a_3} + {b_1}{b_3}} \right)\left[ {{a_2}^2 + {{\left( {{a_3} - {a_1}} \right)}^2} - {a_4}^2 + {{\left( {{b_1} - {b_3}} \right)}^2} + {b_2}^2} \right] + 4{\left[ {{a_2}\left( {{a_1} + {a_3}} \right) + {b_2}\left( {{b_1} + {b_3}} \right)} \right]^2} \\
&+ 4{\left[ {{a_2}\left( {{b_1} - {b_3}} \right) + {b_2}\left( {{a_3} - {a_1}} \right)} \right]^2} - 16{\left( {{a_3}{b_1} - {a_1}{b_3}} \right)^2}, \\
{q_4} =& 4\left[ {{a_2}\left( {{a_1} + {a_3}} \right) + {b_2}\left( {{b_1} + {b_3}} \right)} \right]\left[ { {a_2}^2 +{{\left( {{a_3} - {a_1}} \right)}^2} - {a_4}^2 + {{\left( {{b_1} - {b_3}} \right)}^2} + {b_2}^2} \right] \\
&- 16\left( {{a_3}{b_1} - {a_1}{b_3}} \right)\left[ {{a_2}\left( {{b_1} - {b_3}} \right) + {b_2}\left( {{a_3} - {a_1}} \right)} \right], \\
{q_5} =& {\left[ {{a_2}^2 + {{\left( {{a_3} - {a_1}} \right)}^2} - {a_4}^2 + {{\left( {{b_1} - {b_3}} \right)}^2} + {b_2}^2} \right]^2} - 4{\left[ {{a_2}\left( {{b_1} - {b_3}} \right) + {b_2}\left( {{a_3} - {a_1}} \right)} \right]^2}.
\end{align*}
\end{small}
Denote $r=\cos (4\omega {\tau _4})$, Eq. \eqref{316} is changed into
\begin{equation}\label{317}
{q_1}{r^4} + {q_2}{r^3} + {q_3}{r^2} + {q_4}r + {q_5} = 0.
\end{equation}
From Ferrari's method by back changing the variables and using the formulas for the quadratic and cubic equations, the four roots for Eq. \eqref{317} are listed as follows
\begin{equation}\label{318}
{r_{1,2}} =  - \frac{{{q_2}}}{{4{q_1}}} - S \pm \frac{1}{2}\sqrt { - 4{S^2} - 2\alpha  + \frac{\beta }{S}} ,\quad {r_{3,4}} =  - \frac{{{q_2}}}{{4{q_1}}} + S \pm \frac{1}{2}\sqrt { - 4{S^2} - 2\alpha  - \frac{\beta }{S}},
\end{equation}
where
\[\alpha  = \frac{{8{q_1}{q_3} - 3{q_2^2}}}{{8{q_1^2}}},\quad \beta  = \frac{{{q_2^3} - 4{q_1}{q_2}{q_3} + 8{q_1^2}{q_4}}}{{8{q_1^3}}},\]
in which
\[S = \frac{1}{2}\sqrt { - \frac{2}{3}\alpha + \frac{1}{{3q_1}}\left( {Q + \frac{{{\Delta _0}}}{Q}} \right)} ,\quad Q = \sqrt[3]{{\frac{{{\Delta _1} + \sqrt {{\Delta _1}^2 - 4{\Delta _0^3}} }}{2}}},\]
with ${\Delta _0} = {q_3^2} - 3{q_2}{q_4} + 12{q_1}{q_5}$, ${\Delta _1} = 2{q_3^3} - 9{q_2}{q_3}{q_4} + 27{q_2^2}{q_5} + 27{q_1}{q_4^2} - 72{q_1}{q_3}{q_5}$.
The bifurcation point of system \eqref{201} in this case follows that
\begin{equation}\label{319}
\tau^{*(k)} = \frac{1}{{4\omega }}\left[ {\arccos \left( {{r_j}} \right) + 2k\pi } \right],\quad j = 1,2,3,4,\quad k = 0,1,2, \cdots .
\end{equation}

{\bf (2)} If $\sin(4\omega {\tau _4}) = -\sqrt {1 - {{\cos }^2}(4\omega {\tau _4}})$, then \eqref{314} is changed into
\begin{equation}\label{320}
\begin{aligned}
&{\left[ {\left( {{a_1} + {a_3}} \right)\cos (4\omega {\tau _4}) - \left( {{b_1} - {b_3}} \right)\sqrt {1 - {{\cos }^2}(4\omega {\tau _4}}) + {a_2}} \right]^2} \\
&+ {\left[ {\left( {{b_1} + {b_3}} \right)\cos (4\omega {\tau _4}) - \left( {{a_3} - {a_1}} \right)\sqrt {1 - {{\cos }^2}(4\omega {\tau _4}}) + {b_2}} \right]^2} = {a_4}^2.
\end{aligned}
\end{equation}
Direct calculation shows that Eq. \eqref{320} has similar roots with Eq. \eqref{315}. Define the bifurcation point of system \eqref{201} as ${\tau _0^*} = \min \{\tau^{*(k)}\}$, $k = 0,1,2,\cdots$, where $\tau^{*(k)}$ is defined by \eqref{319}. By the above discussion, we obtain that ${\tau _0^*}$ is a function with respect to $\omega$. Denote $f(\omega)={\tau _0^*}$. From \eqref{314}, we have
\begin{equation}\label{321}
\begin{aligned}
&{\left[ {\left( {{a_1} + {a_3}} \right)\cos \left( {4\omega f(\omega )} \right) + \left( {{b_1} - {b_3}} \right)\sin \left( {4\omega f(\omega )} \right) + {a_2}} \right]^2} \\
&+ {\left[ {\left( {{b_1} + {b_3}} \right)\cos \left( {4\omega f(\omega )} \right) + \left( {{a_3} - {a_1}} \right)\sin \left( {4\omega f(\omega )} \right) + {b_2}} \right]^2} = {a_4}^2.
\end{aligned}
\end{equation}

\noindent Similarly, we make the following assumptions.

\noindent{\bf (H5)} Eq. \eqref{321} has at least one positive real root.

\noindent{\bf (H6)} $\left({{\Theta_1}{\Upsilon_1} + {\Theta_2}{\Upsilon _2}}\right)/\left({{\Upsilon _1}^2 + {\Upsilon _2}^2}\right)\neq0$, where ${\Theta _i}$, ${\Upsilon _i}$ $(i=1,2)$ are defined in Appendix C.

\begin{Lemma}\label{lem303}
Let $\lambda (\tau_4) = \mu (\tau_4) + i\omega (\tau_4)$, be the root of Eq. \eqref{311} near $\tau_4=\tau_0^*$ satisfying $\mu(\tau_0^*)=0$, $\omega(\tau_0^*)=\omega_0^*$, then the following transversality condition holds
\[{\mathop{\rm Re}\nolimits} \left[ {\frac{{d\lambda }}{{d\tau_4}}} \right]\Big| {_{\tau_4 = {\tau _0^*}}}  \ne 0.\]
\end{Lemma}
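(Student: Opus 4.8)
The plan is to follow the same implicit-differentiation strategy that established Lemma \ref{lem302}, now applied to the characteristic equation \eqref{311}. Regarding $\lambda$ as an implicit function of $\tau_4$ near the critical point, I would differentiate both sides of \eqref{311} with respect to $\tau_4$. The left-hand side splits naturally into four blocks carrying the exponential factors $e^{-4\lambda\tau_4}$, $1$, $e^{4\lambda\tau_4}$, and $e^{8\lambda\tau_4}$, so each block is handled by the product rule: differentiating the outer exponential prefactor produces a term proportional to $\mp 4(\lambda + \tau_4\,d\lambda/d\tau_4)$ (respectively $8(\lambda + \tau_4\,d\lambda/d\tau_4)$ for the last block), while differentiating the polynomial $p_n(\lambda)$ produces $p_n'(\lambda)\,d\lambda/d\tau_4$. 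Note that each $p_n(\lambda)$ itself contains the fixed-$\tau_1$ exponentials $e^{j\lambda\tau_1}$ and fractional powers $\lambda^{k\theta}$, so $p_n'(\lambda)$ must be computed by the chain rule, yielding contributions of the form $j\tau_1 e^{j\lambda\tau_1}\lambda^{k\theta}$ and $k\theta\, e^{j\lambda\tau_1}\lambda^{k\theta-1}$.

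The next step is purely algebraic: collect all terms that multiply $d\lambda/d\tau_4$ on one side and move the remaining terms (those arising solely from differentiating the outer exponentials, which are proportional to $\lambda$) to the other. Solving gives
\[\frac{d\lambda}{d\tau_4} = \frac{\Theta(s)}{\Upsilon(s)},\]
where $\Upsilon(s)$ is exactly the accumulated coefficient of $d\lambda/d\tau_4$ — namely $p_2'(\lambda)$, together with $e^{-4\lambda\tau_4}\bigl(p_1'(\lambda) - 4\tau_4 p_1(\lambda)\bigr)$, $e^{4\lambda\tau_4}\bigl(p_3'(\lambda) + 4\tau_4 p_3(\lambda)\bigr)$, and $8\tau_4 e^{8\lambda\tau_4}p_4$ — and $\Theta(s) = 4\lambda\bigl[e^{-4\lambda\tau_4}p_1(\lambda) - e^{4\lambda\tau_4}p_3(\lambda) - 2e^{8\lambda\tau_4}p_4\bigr]$ collects the $\lambda$-proportional remainder. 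These are precisely the quantities whose real and imaginary parts, $\Theta_i$ and $\Upsilon_i$, are tabulated in Appendix C.

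I would then evaluate at $\lambda = i\omega_0^*$, $\tau_4 = \tau_0^*$ and separate real and imaginary parts, writing $\Theta(s) = \Theta_1 + i\Theta_2$ and $\Upsilon(s) = \Upsilon_1 + i\Upsilon_2$. Rationalizing the quotient gives
\[{\mathop{\rm Re}\nolimits}\left[\frac{d\lambda}{d\tau_4}\right]\Big|_{\tau_4 = \tau_0^*} = \frac{\Theta_1\Upsilon_1 + \Theta_2\Upsilon_2}{\Upsilon_1^2 + \Upsilon_2^2},\]
and Assumption \textbf{(H6)} asserts exactly that this expression is nonzero, which closes the argument.

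The hard part is not the logical structure — which is identical to Lemma \ref{lem302} — but the bookkeeping in the separation of real and imaginary parts. Unlike the $\tau_3$ case, here three distinct exponential arguments ($-4\omega\tau_4$, $4\omega\tau_4$, $8\omega\tau_4$) are in play simultaneously, and each $p_n(i\omega)$ must itself be expanded using $(i\omega)^{k\theta} = \omega^{k\theta}\bigl(\cos\frac{k\theta\pi}{2} + i\sin\frac{k\theta\pi}{2}\bigr)$ together with the $\tau_1$-dependent phases $e^{ij\omega\tau_1}$. Consequently $\Theta_1, \Theta_2, \Upsilon_1, \Upsilon_2$ are lengthy trigonometric expressions, and the practical difficulty is carrying out this expansion cleanly; packaging the final nondegeneracy requirement into the single hypothesis \textbf{(H6)} is precisely what lets the proof sidestep an explicit verification that the combination $\Theta_1\Upsilon_1 + \Theta_2\Upsilon_2$ fails to vanish.
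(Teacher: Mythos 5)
Your proposal is correct and follows essentially the same route as the paper: implicit differentiation of Eq.~\eqref{311} in $\tau_4$, solving for $d\lambda/d\tau_4=\Theta(s)/\Upsilon(s)$ with $\Theta(s)=4\lambda\bigl[e^{-4\lambda\tau_4}p_1-e^{4\lambda\tau_4}p_3-2e^{8\lambda\tau_4}p_4\bigr]$, evaluating at $\lambda=i\omega_0^*$, and invoking {\bf (H6)} to conclude nonvanishing of $\bigl(\Theta_1\Upsilon_1+\Theta_2\Upsilon_2\bigr)/\bigl(\Upsilon_1^2+\Upsilon_2^2\bigr)$. The one point of divergence is that your fully chain-ruled $p_n'(\lambda)$ includes the contributions $j\tau_1 e^{j\lambda\tau_1}\lambda^{k\theta}$ from differentiating the fixed-$\tau_1$ exponentials, whereas the paper's $\Upsilon(s)$ and the Appendix~C expressions for $\Upsilon_1,\Upsilon_2$ retain only the $k\theta\,e^{j\lambda\tau_1}\lambda^{k\theta-1}$ terms --- so your quantities do not match Appendix~C ``precisely'' as you claim, though your version is the more careful computation.
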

\begin{proof}
Based on implicit function theorem, we calculate the derivative of Eq. \eqref{311} with respect to $\tau_4$ as follows
\begin{align*}
&{e^{ - 4\lambda {\tau _4}}}{p_1}(\lambda )\left( { - 4\lambda  - 4{\tau _4}\frac{{d\lambda }}{{d{\tau _4}}}} \right) + {e^{ - 4\lambda {\tau _4}}}\frac{{d{p_1}(\lambda )}}{{d{\tau _4}}} + \frac{{d{p_2}(\lambda )}}{{d{\tau _4}}} \\
&+ {e^{4\lambda {\tau _4}}}{p_3}(\lambda )\left( {4\lambda  + 4{\tau _4}\frac{{d\lambda }}{{d{\tau _4}}}} \right) + {e^{4\lambda {\tau _4}}}\frac{{d{p_3}(\lambda )}}{{d{\tau _4}}} + \left( {8\lambda  + 8{\tau _4}\frac{{d\lambda }}{{d{\tau _4}}}} \right){e^{8\lambda {\tau _4}}}{p_4} = 0.
\end{align*}
Direct calculation yields
\[\frac{{d\lambda }}{{d{\tau _4}}} = \frac{{\Theta (s)}}{{\Upsilon (s)}},\]
where
\begin{align*}
\Theta (s) =& 4{e^{ - 4\lambda {\tau _4}}}\lambda {p_1}(\lambda ) - 4{e^{4\lambda {\tau _4}}}\lambda {p_3}(\lambda ) - 8{e^{8\lambda {\tau _4}}}\lambda {p_4}, \\
\Upsilon (s) =& 6\theta {e^{6\lambda {\tau _1}}}{e^{ - 4\lambda {\tau _4}}}{\lambda ^{6\theta  - 1}} + 5{c_{11}}\theta {e^{5\lambda {\tau _1}}}{e^{ - 4\lambda {\tau _4}}}{\lambda ^{5\theta  - 1}} + 4{c_{21}}\theta {e^{4\lambda {\tau _1}}}{e^{ - 4\lambda {\tau _4}}}{\lambda ^{4\theta  - 1}} \\
&+ 3{c_{31}}\theta {e^{3\lambda {\tau _1}}}{e^{ - 4\lambda {\tau _4}}}{\lambda ^{3\theta  - 1}} + 2{c_{41}}\theta {e^{2\lambda {\tau _1}}}{e^{ - 4\lambda {\tau _4}}}{\lambda ^{2\theta  - 1}} + {c_{51}}\theta {e^{\lambda {\tau _1}}}{e^{ - 4\lambda {\tau _4}}}{\lambda ^{\theta  - 1}} \\
&- 4{\tau _4}{e^{ - 4\lambda {\tau _4}}}{p_1}(\lambda ) + 4{\tau _4}{e^{4\lambda {\tau _4}}}{p_3}(\lambda ) + 8{p_4}{\tau _4}{e^{8\lambda {\tau _4}}} + 2\theta \left( {{c_{43}} - {c_{44}}} \right){e^{2\lambda {\tau _1}}}{e^{4\lambda {\tau _4}}}{\lambda ^{2\theta  - 1}} \\
&+ \theta \left( {{c_{53}} - {c_{54}}} \right){e^{\lambda {\tau _1}}}{e^{4\lambda {\tau _4}}}{\lambda ^{\theta  - 1}} - 4{c_{22}}\theta {e^{4\lambda {\tau _1}}}{\lambda ^{4\theta  - 1}} - 3{c_{32}}\theta {e^{3\lambda {\tau _1}}}{\lambda ^{3\theta  - 1}} \\
&- 2{c_{42}}\theta {e^{2\lambda {\tau _1}}}{\lambda ^{2\theta  - 1}} - {c_{52}}\theta {e^{\lambda {\tau _1}}}{\lambda ^{\theta  - 1}}.
\end{align*}
The real part of $d\lambda /d{\tau _4}$ at $\tau_4 = {\tau_0^*}$ is
\[{\rm{Re}}\left[ {\frac{{d\lambda }}{{d{\tau _4}}}} \right]\Big|_{{\tau _4} = {\tau _0^*}} = \frac{{{\Theta _1}{\Upsilon _1} + {\Theta _2}{\Upsilon _2}}}{{{\Upsilon _1}^2 + {\Upsilon _2}^2}},\]
where ${\Theta _1}$ and ${\Theta _2}$ are the real and imaginary parts of $\Theta(s)$, respectively; ${\Upsilon _1}$ and ${\Upsilon _2}$ are the real and imaginary parts of $\Upsilon(s)$, respectively. From Assumption {\bf (H6)}, the transversality condition meets. This completes the proof.
\end{proof}

\begin{Theorem}\label{thm303}
For system \eqref{201}, if Assumptions {\bf (H1), (H5), (H6)} and Lemma \ref{lem303} hold, we obtain the following results

{\bf a)} The zero equilibrium point is locally asymptotically stable for $\tau_4 \in [0,\tau_0^*)$;

{\bf b)} system \eqref{201} undergoes a Hopf bifurcation at the origin when $\tau_4=\tau_0^*$, i.e., it has a branch of periodic solutions bifurcating from the zero equilibrium point near $\tau_4=\tau_0^*$.
\end{Theorem}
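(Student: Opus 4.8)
The plan is to reproduce, for the parameter $\tau_4$, the three-step architecture already used for $\tau_3$ in Theorem \ref{thm302}, assembling a baseline stability, a critical pair of purely imaginary roots, and the transversal crossing supplied by Lemma \ref{lem303}. First I would anchor the continuation at the left endpoint. With $\tau_1$ held in the stable range and $\tau_4=0$ (so that $\tau_2=\tau_1=\tau_3$), the characteristic equation \eqref{311} reduces to the situation treated in Theorem \ref{thm302}, whose part (ii)a) gives local asymptotic stability of the origin when $\tau_1$ lies in $[0,\tau_0)$; this is the (tacit) admissibility requirement on the fixed parameter $\tau_1$, and it provides the starting point for the root-continuation argument.

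Second, I would use {\bf (H5)} to extract the critical data. A positive real root $\omega_0^*$ of \eqref{321} means that at that frequency the amplitude balance \eqref{314} holds, so the pair $\lambda=\pm i\omega_0^*$ solves the split real/imaginary system \eqref{313} and hence the full equation \eqref{311}; the corresponding delays are enumerated by \eqref{319}, and $\tau_0^*=\min\{\tau^{*(k)}\}$ is by construction the least value of $\tau_4$ at which \eqref{311} acquires a root on the imaginary axis. For part a) I would then argue by continuity of the roots of \eqref{311} in $\tau_4$: since minimality of $\tau_0^*$ forbids any purely imaginary root on $[0,\tau_0^*)$, and the baseline is stable, no root can migrate across the imaginary axis before $\tau_0^*$, so the origin remains locally asymptotically stable on that interval.

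Third, for part b) I would check the two defining conditions of a Hopf bifurcation at $\tau_4=\tau_0^*$: the presence of a pair of purely imaginary roots $\pm i\omega_0^*$ (just produced) and the transversal crossing $\mathrm{Re}[d\lambda/d\tau_4]\big|_{\tau_4=\tau_0^*}\neq0$, which is precisely the conclusion of Lemma \ref{lem303} under {\bf (H6)}. Invoking the Hopf bifurcation theorem for fractional delayed systems then produces the branch of periodic solutions bifurcating from the origin near $\tau_4=\tau_0^*$.

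The main obstacle lies in the bookkeeping hidden in the passage from \eqref{313} to \eqref{317}. Squaring and adding the two equations of \eqref{313} to form the quartic \eqref{317} in $r=\cos(4\omega\tau_4)$ can introduce spurious roots, and the sign split $\sin(4\omega\tau_4)=\pm\sqrt{1-r^2}$ into cases (1)--(2) must be reconciled so that the selected pair $(\omega_0^*,\tau_0^*)$ genuinely satisfies the unsquared system \eqref{313}, not merely \eqref{314}. Compounding this, $\tau_0^*$ is defined only implicitly as a function $f(\omega)$ through Ferrari's formulas \eqref{318}, so the admissible frequency $\omega_0^*$ must simultaneously solve the self-consistency equation \eqref{321} with a real root satisfying $|r|\le 1$; I would need to confirm that such a consistent, admissible pair exists before the transversality computation is even meaningful. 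This root-selection and consistency step is where the $\tau_4$ analysis is genuinely more delicate than the $\tau_3$ case, in which the substitution $s=e^{\lambda\tau_3}\lambda^\theta$ collapsed the delay dependence into a single ordinary polynomial \eqref{304}.
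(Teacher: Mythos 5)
Your proposal matches the paper's (implicit) argument: the paper states Theorem \ref{thm303} with no separate proof, relying exactly on the preceding derivation of the critical pair $(\omega_0^*,\tau_0^*)$ from {\bf (H5)} together with the transversality condition of Lemma \ref{lem303} under {\bf (H6)}, which is precisely the three-step structure (stable baseline at $\tau_4=0$, purely imaginary roots at $\tau_0^*$, transversal crossing) that you describe. Your closing caution about spurious roots introduced by squaring \eqref{313} and about the implicit definition $\tau_0^*=f(\omega)$ is a legitimate worry, but it is a gap the paper itself glosses over rather than a defect of your approach.
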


\section{Representative examples}

In this section, we present some numerical simulations of system \eqref{201} to verify the analytical predictions obtained in Section 2. The simulation results are based on Adams-Bashforth-Moulton algorithm \cite{BV} and step-length $h=0.01$. Selecting appropriate system parameters, system \eqref{201} is changed into
\begin{small}
\begin{equation}\label{401}
\left\{
\begin{aligned}
&{D^\theta }{x_1}(t) =  - 0.4{x_1}(t - {\tau _1}) - 0.8{f_{11}}({y_1}(t - {\tau _2})) - 1.5{f_{12}}({y_2}(t - {\tau _2})) - 0.7{f_{13}}({y_3}(t - {\tau _2})), \\
&{D^\theta }{x_2}(t) =  - 0.6{x_2}(t - {\tau _1}) - 0.5{f_{21}}({y_1}(t - {\tau _2})) - 0.6{f_{22}}({y_2}(t - {\tau _2})) - 0.8{f_{23}}({y_3}(t - {\tau _2})), \\
&{D^\theta }{x_3}(t) =  - 0.5{x_3}(t - {\tau _1}) - 1.2{f_{31}}({y_1}(t - {\tau _2})) - 1.3{f_{32}}({y_2}(t - {\tau _2})) + 0.8{f_{33}}({y_3}(t - {\tau _2})), \\
&{D^\theta }{y_1}(t) =  - 0.7{y_1}(t - {\tau _1}) - 0.5{g_{11}}({x_1}(t - {\tau _2})) + 1.8{g_{12}}({x_2}(t - {\tau _2})) + 1.5{g_{13}}({x_3}(t - {\tau _2})), \\
&{D^\theta }{y_2}(t) =  - 0.8{y_2}(t - {\tau _1}) - 0.5{g_{21}}({x_1}(t - {\tau _2})) + 1.2{g_{22}}({x_2}(t - {\tau _2})) + 1.5{g_{23}}({x_3}(t - {\tau _2})), \\
&{D^\theta }{y_3}(t) =  - 0.3{y_3}(t - {\tau _1}) + 1.5{g_{31}}({x_1}(t - {\tau _2})) + 1.6{g_{32}}({x_2}(t - {\tau _2})) + 1.2{g_{33}}({x_3}(t - {\tau _2})),
\end{aligned}
\right.
\end{equation}
\end{small}
where activation functions are chosen as $f_{ij}(\cdot)=g_{ij}(\cdot)=tanh(\cdot)$.

In the next two subsections, we will illustrate our theoretical results by selecting different leakage delays and communication delays. Example 1 is relevant for Section 3.1, while Example 2 is related to Section 3.2. By means of plotting temporal solutions of $x_1(t)$, $x_2(t)$, $x_3(t)$, $y_1(t)$, $y_2(t)$ and $y_3(t)$ versus $t$ and corresponding phase diagrams of system \eqref{401} projected on different quadrants, the dynamical behavior of system \eqref{401} can be depicted clearly.

\subsection{Example 1}

In this example, we choose $\tau_3=\tau_1=\tau_2$, i.e., $\tau_4=0$, and illustrate the theoretical results obtained in Section 3.1. The fractional order here is chosen as $\theta=0.91$ and the initial value is selected as $\left(x_1(0),x_2(0),x_3(0),y_1(0),y_2(0),y_3(0)\right)=(0.2,0.4,-0.3,0.3,-0.5,-0.4)$. From \eqref{307}, we obtain that the critical frequency $\omega_0=2.2603$ and the bifurcation point $\tau_0=0.1234$. From Fig.\ref{fig1} and Fig.\ref{fig2}, it can be illustrated that the zero equilibrium point is locally asymptotically stable when $\tau_3=0.1<\tau_0=0.1234$, while, Fig.\ref{fig3} and Fig.\ref{fig4} demonstrate that the zero equilibrium point loses stability and Hopf bifurcation occurs when $\tau_3=0.15>\tau_0=0.1234$. This example indicates that leakage delay and communication delay indeed have a destabilizing influence on the stability performance of system \eqref{401}.

\begin{figure}[H]
\centering
\includegraphics[width=2in]{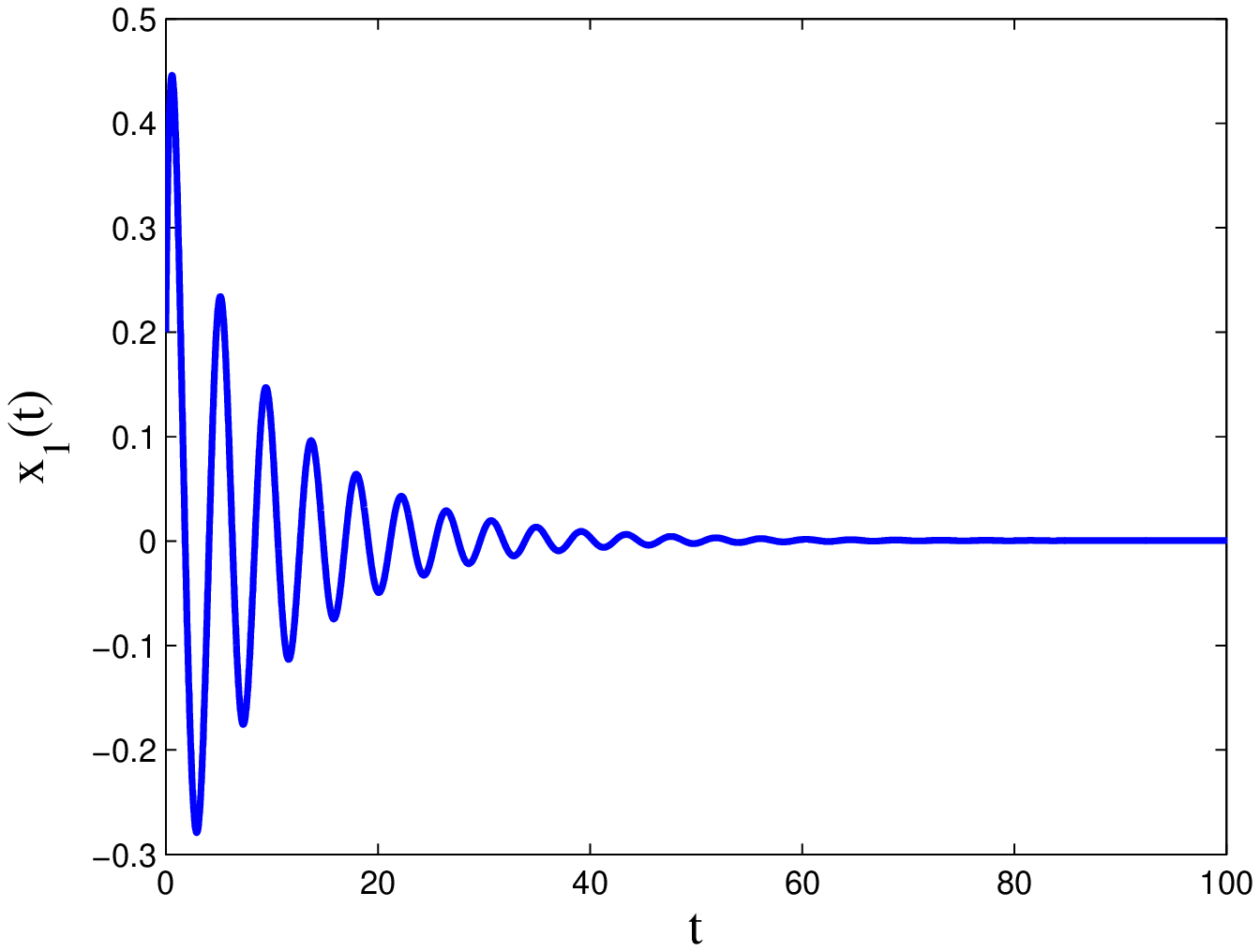}
\includegraphics[width=2in]{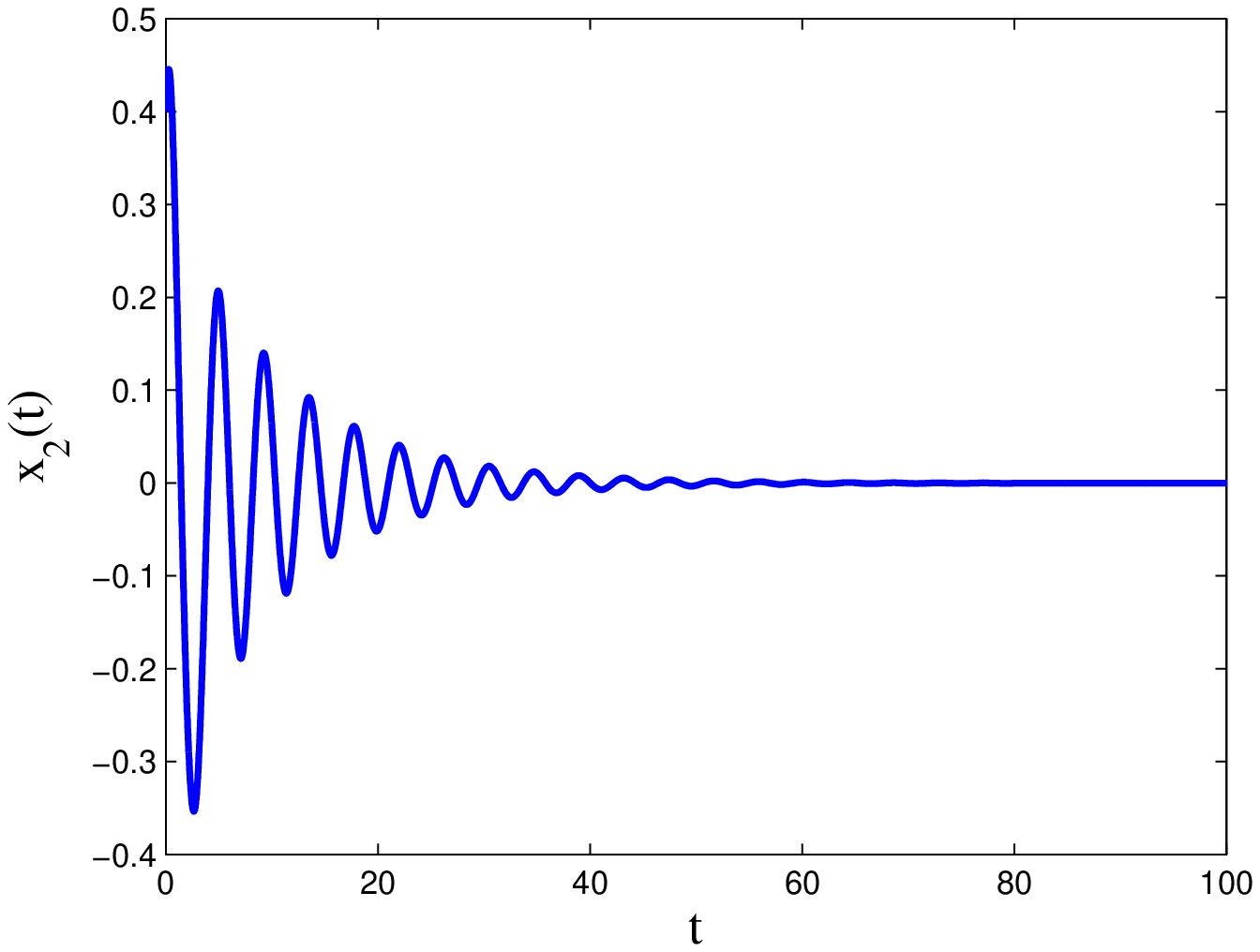}
\includegraphics[width=2in]{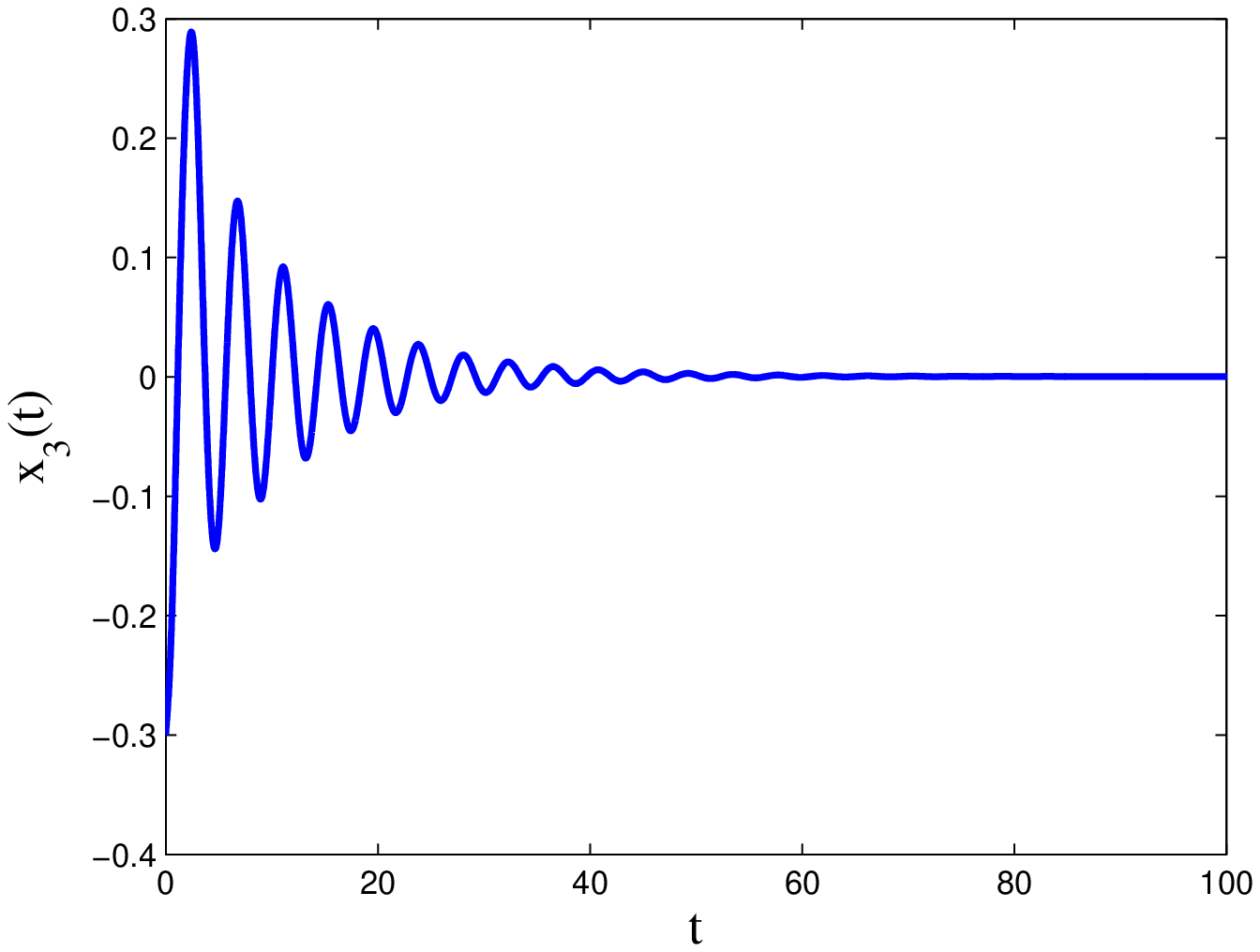}
\includegraphics[width=2in]{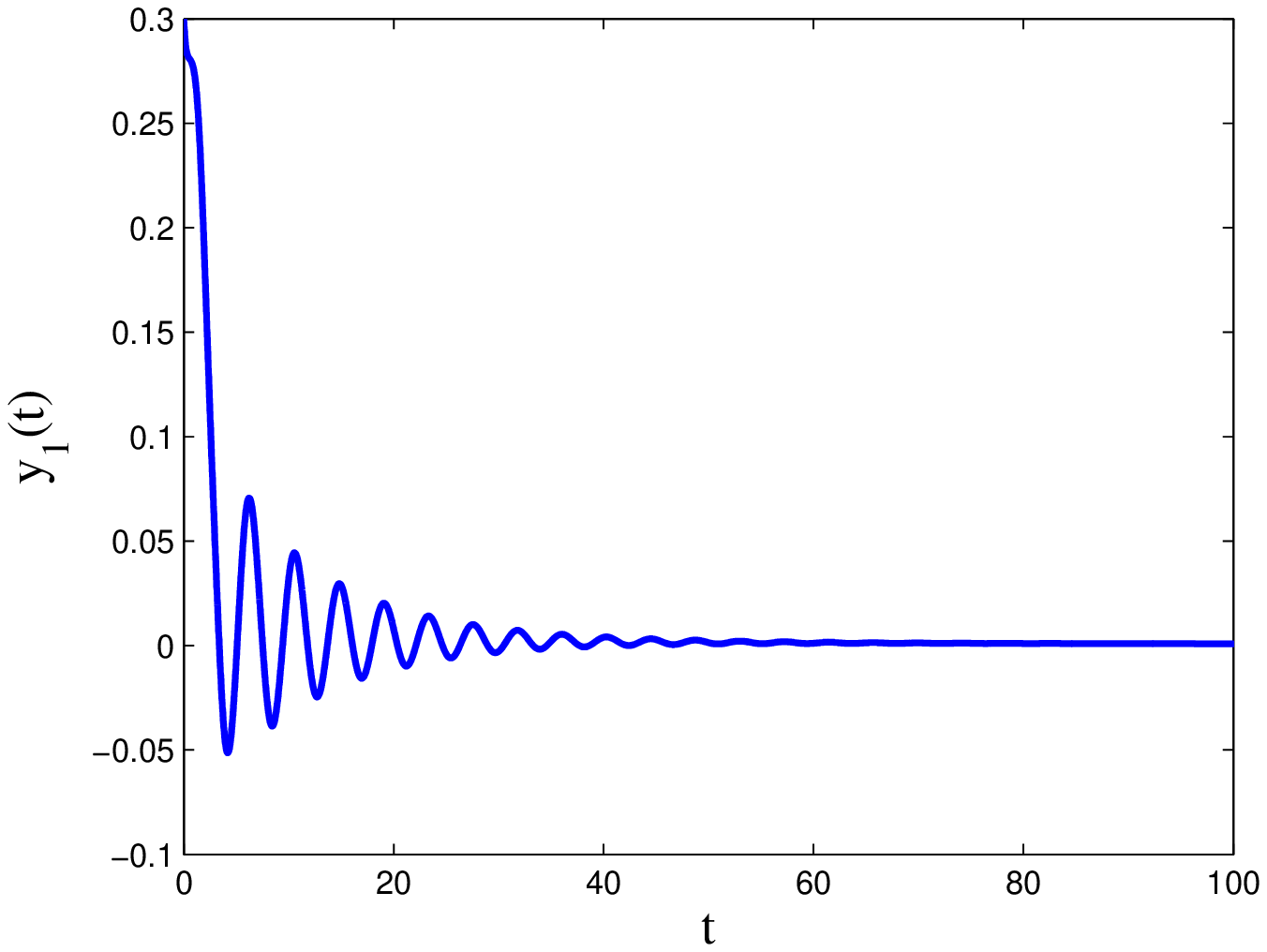}
\includegraphics[width=2in]{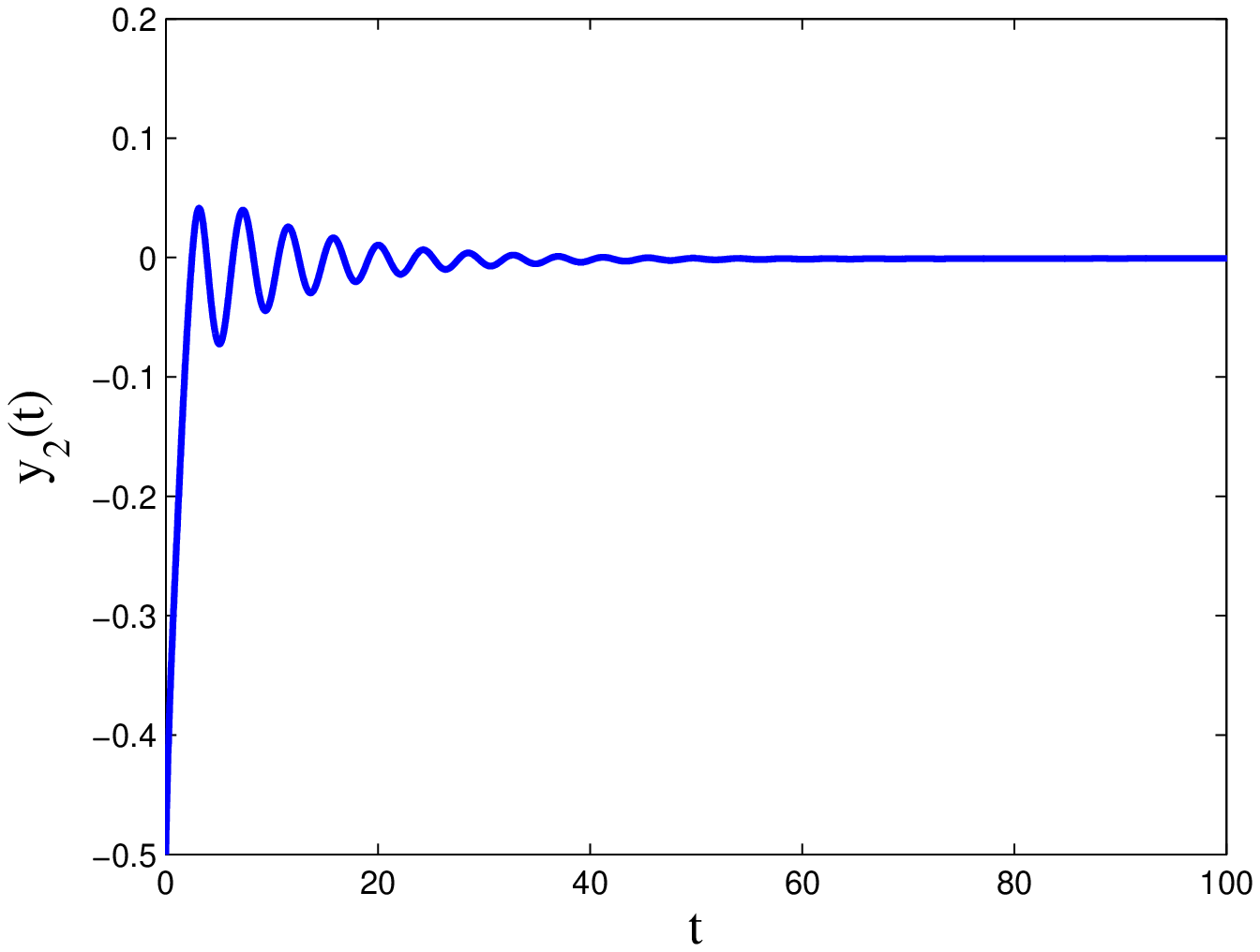}
\includegraphics[width=2in]{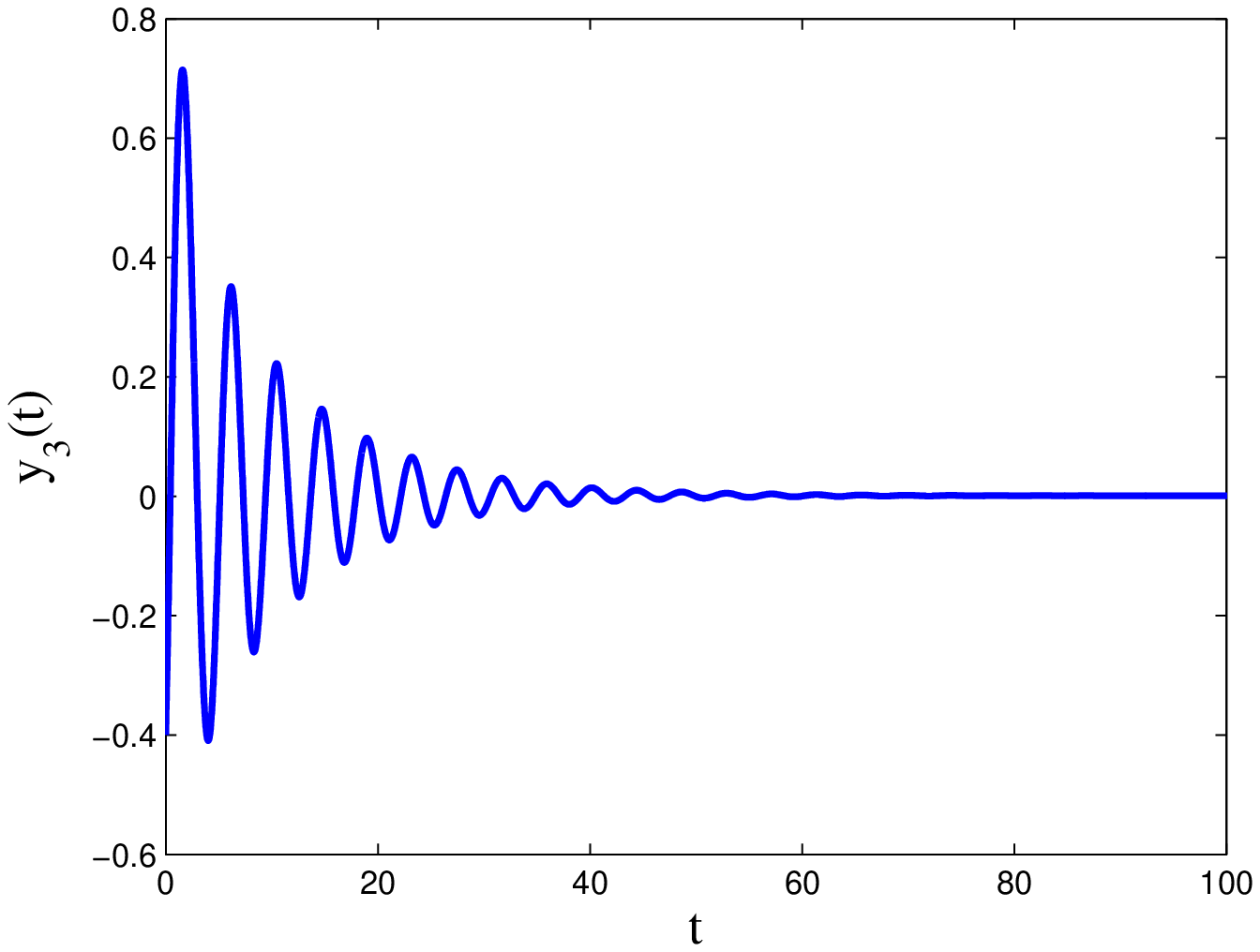}
\caption{\small{The temporal solutions of $x_1(t)$, $x_2(t)$, $x_3(t)$, $y_1(t)$, $y_2(t)$ and $y_3(t)$ versus $t$ of system \eqref{401} with $\theta=0.91$ and $\tau_3=0.1<\tau_0=0.1234$.}}
\label{fig1}
\end{figure}

\begin{figure}[H]
\centering
\includegraphics[width=2in]{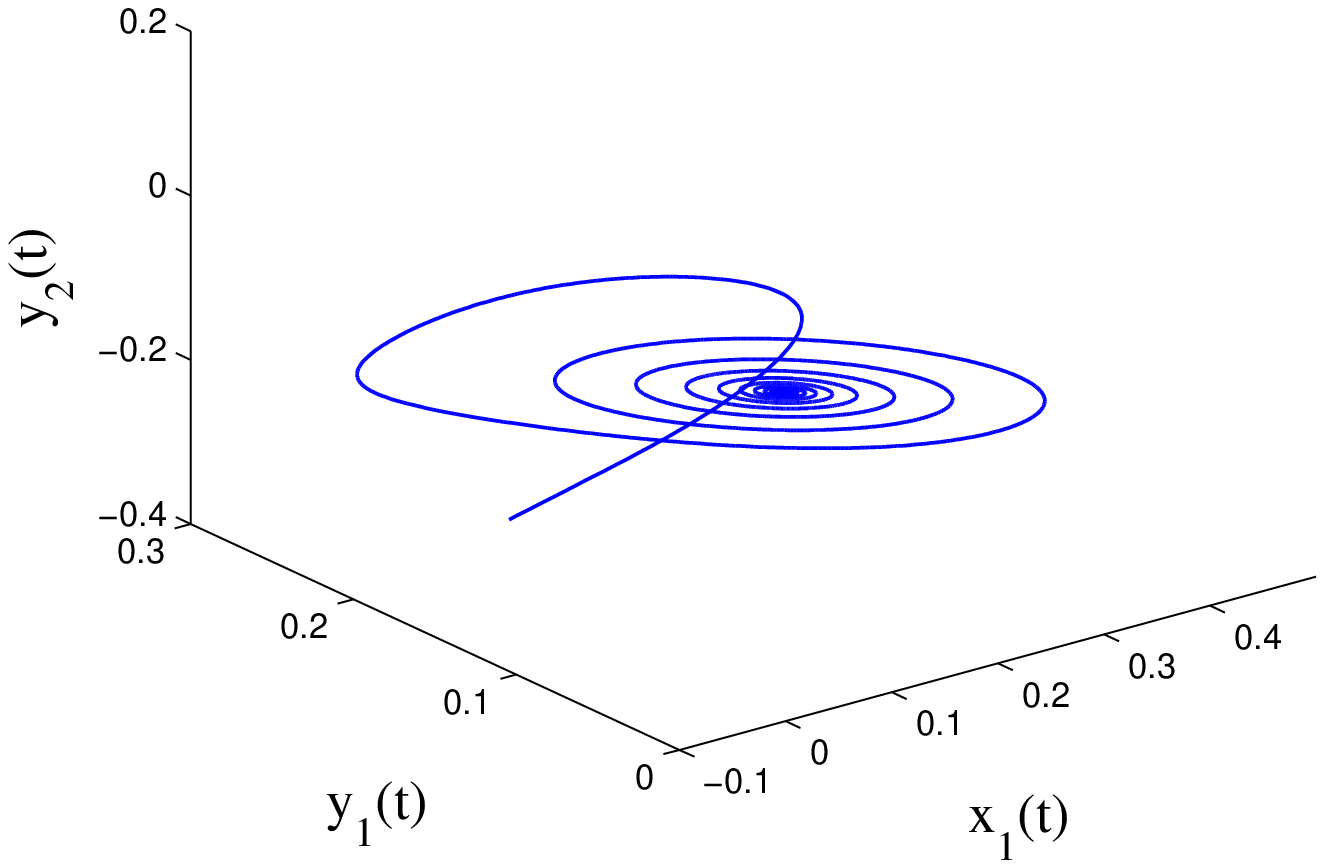}
\includegraphics[width=2in]{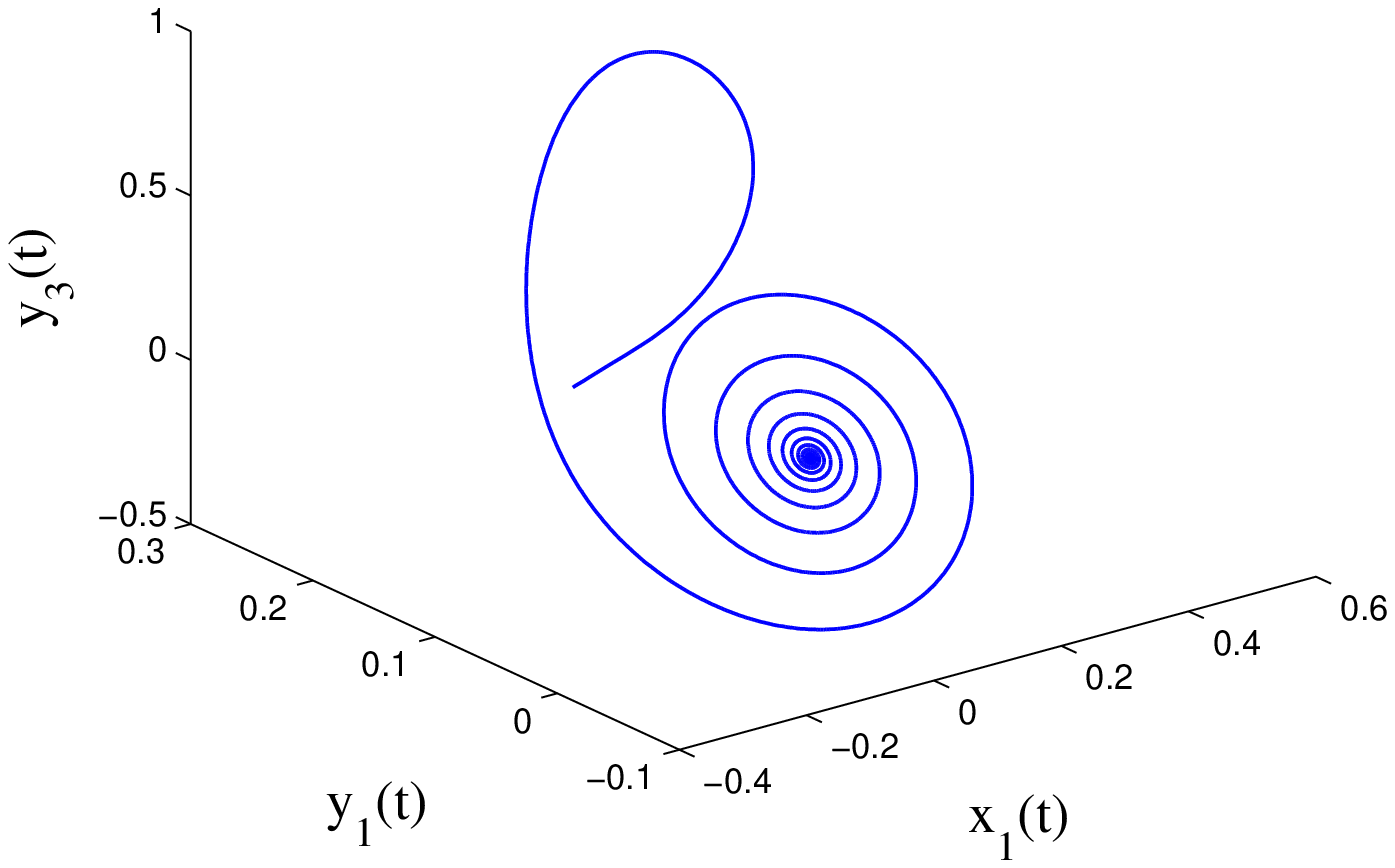}
\includegraphics[width=2in]{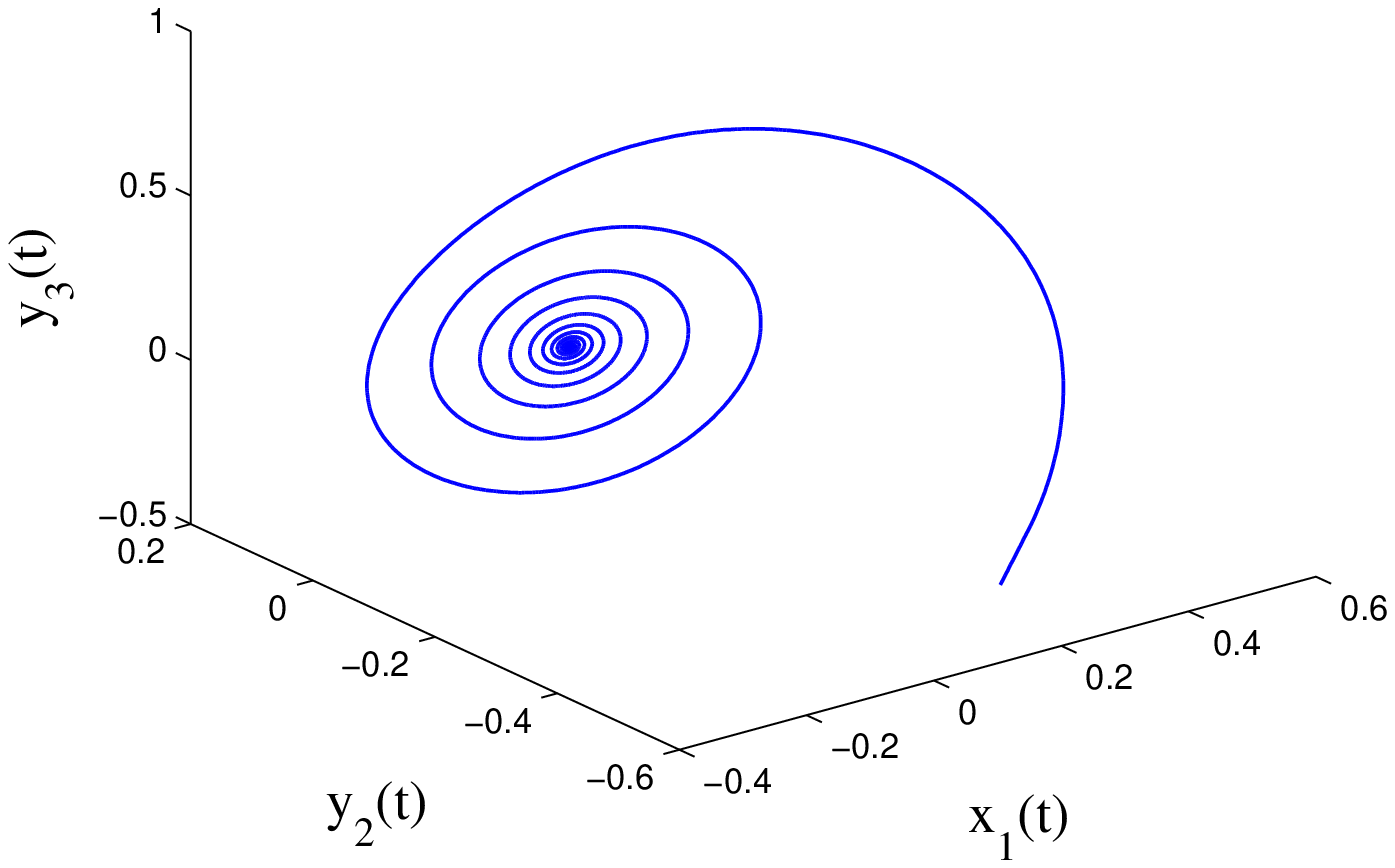}
\includegraphics[width=2in]{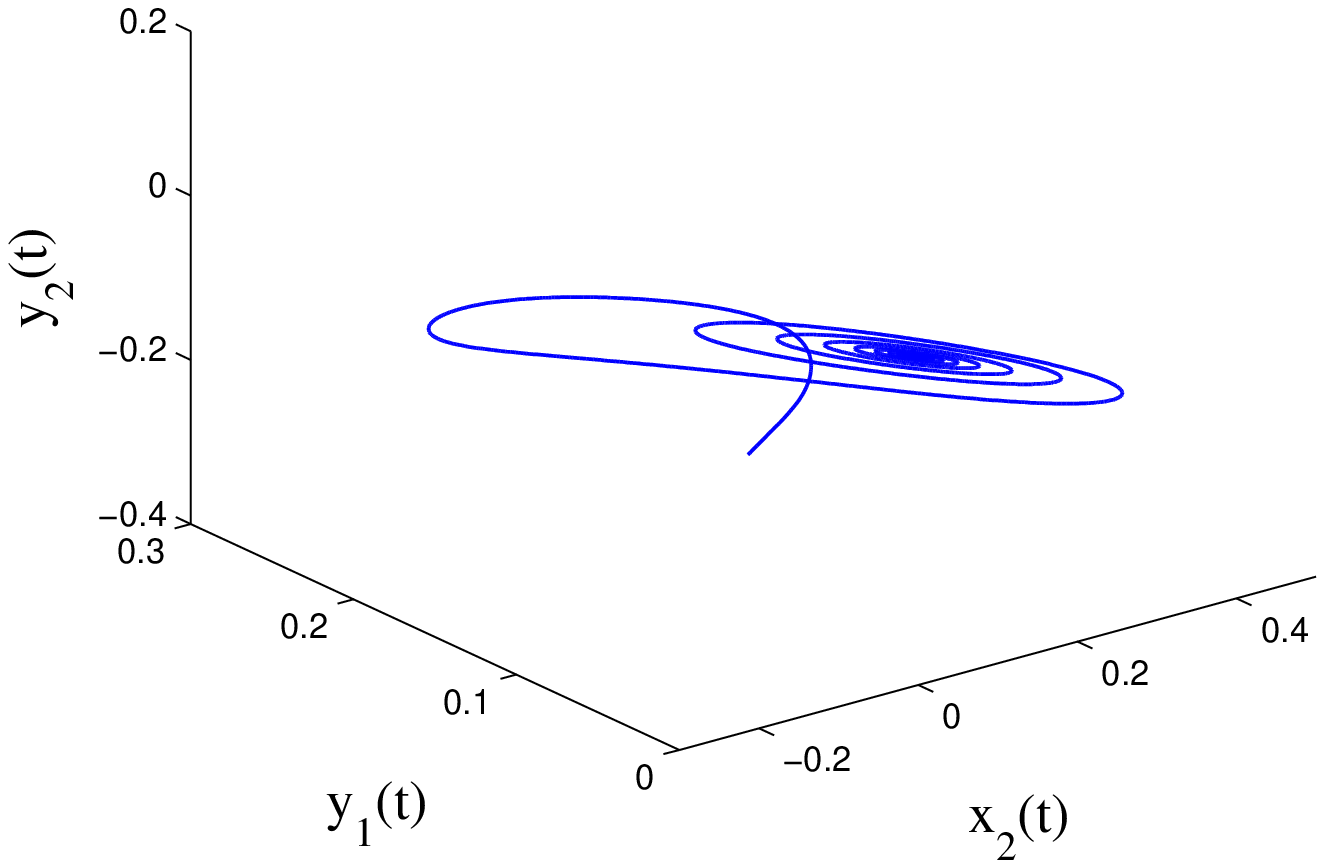}
\includegraphics[width=2in]{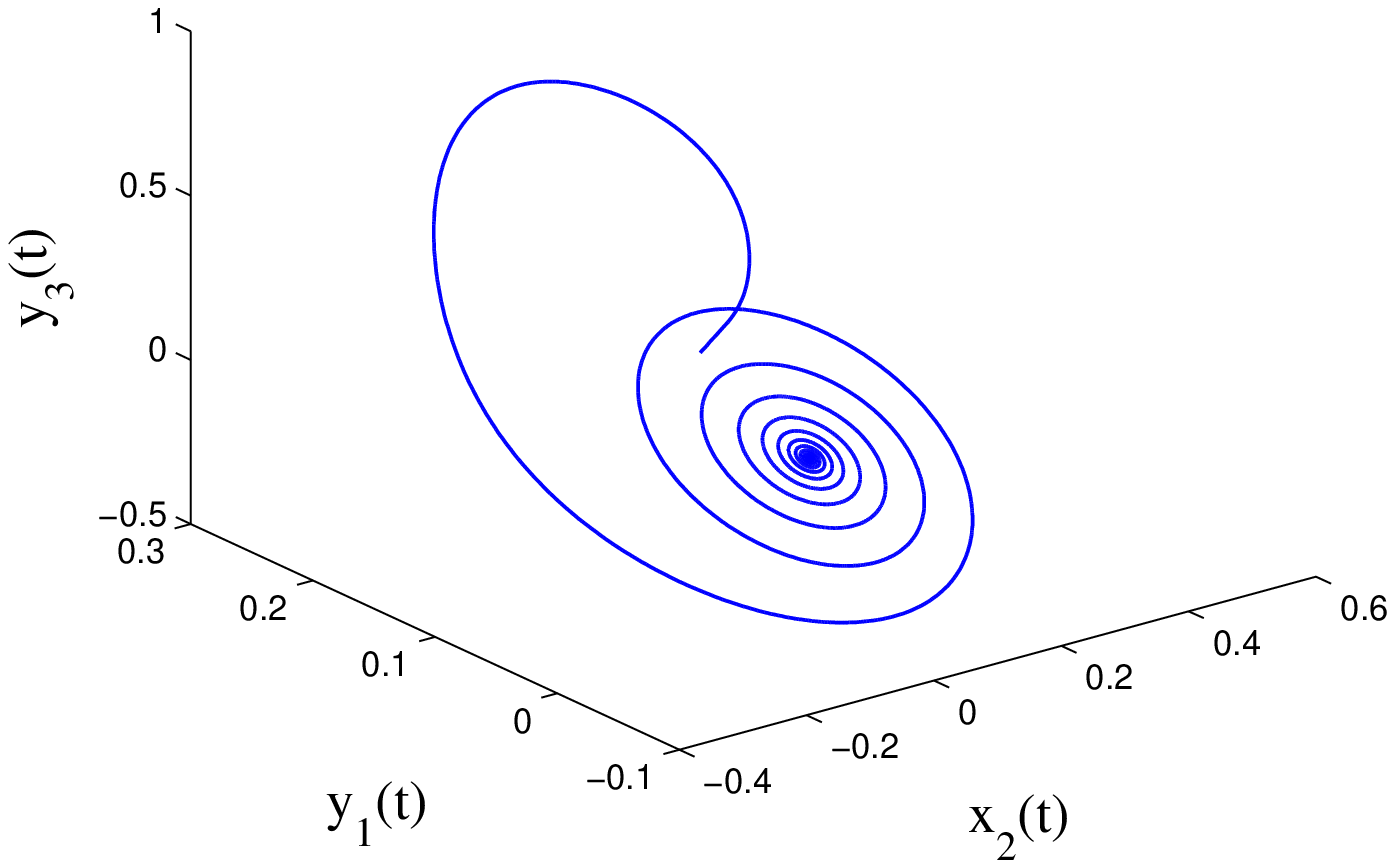}
\includegraphics[width=2in]{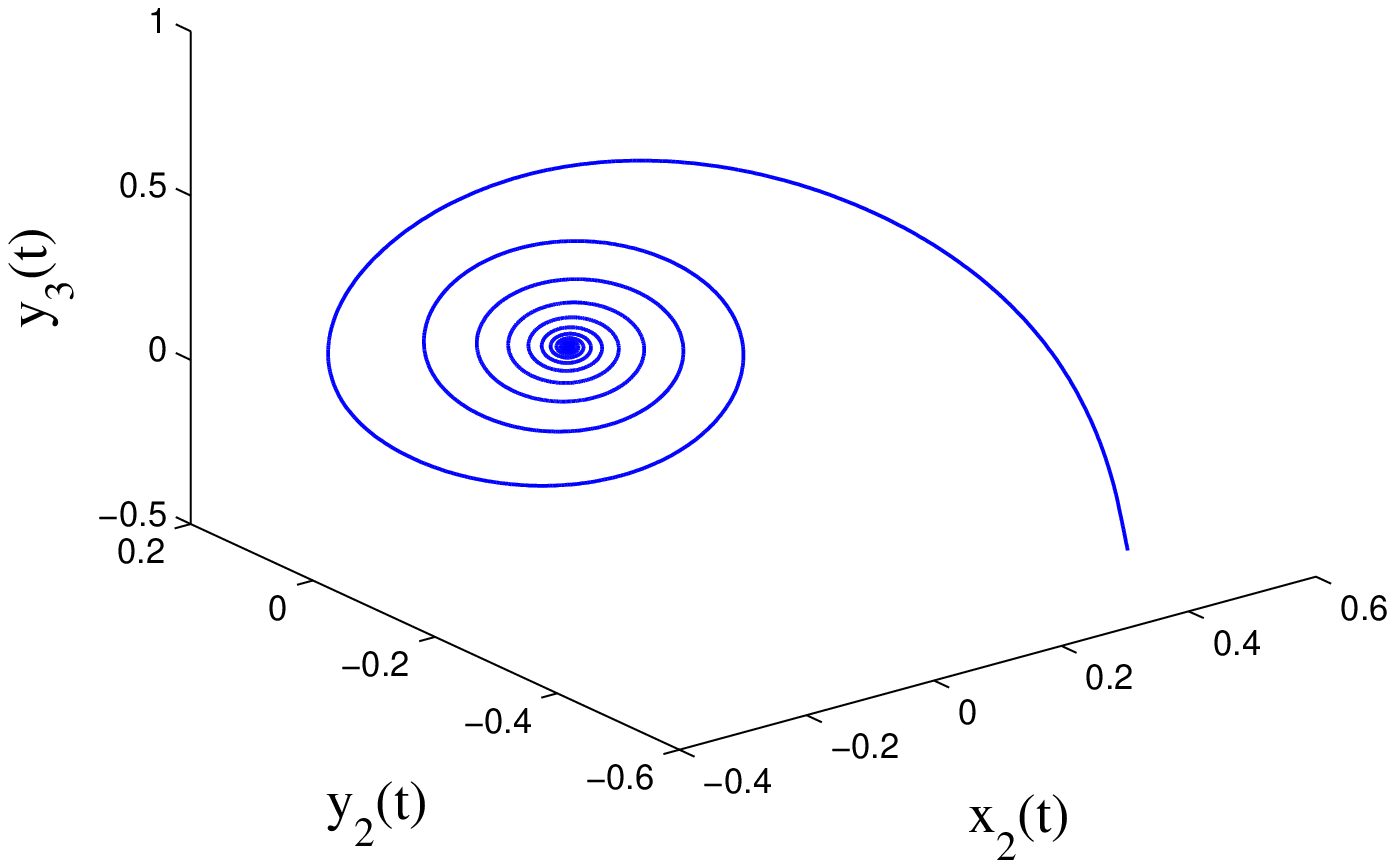}
\caption{\small{Phase diagrams of system \eqref{401} with $\theta=0.91$ and $\tau_3=0.1<\tau_0=0.1234$ projected on $x_1-y_1-y_2$, $x_1-y_1-y_3$, $x_1-y_2-y_3$, $x_2-y_1-y_2$, $x_2-y_1-y_3$ and $x_2-y_2-y_3$, respectively.}}
\label{fig2}
\end{figure}

\begin{figure}[H]
\centering
\includegraphics[width=2in]{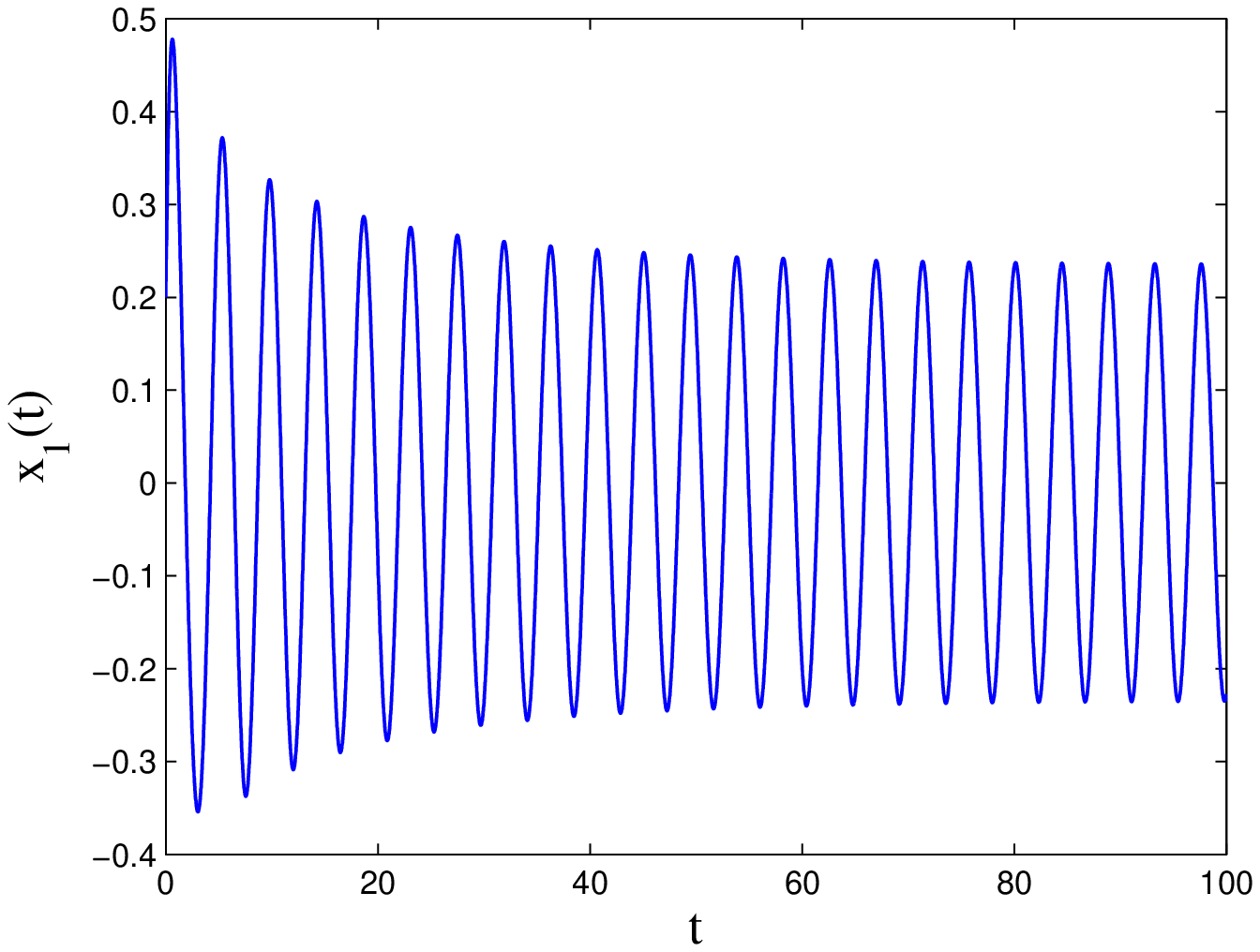}
\includegraphics[width=2in]{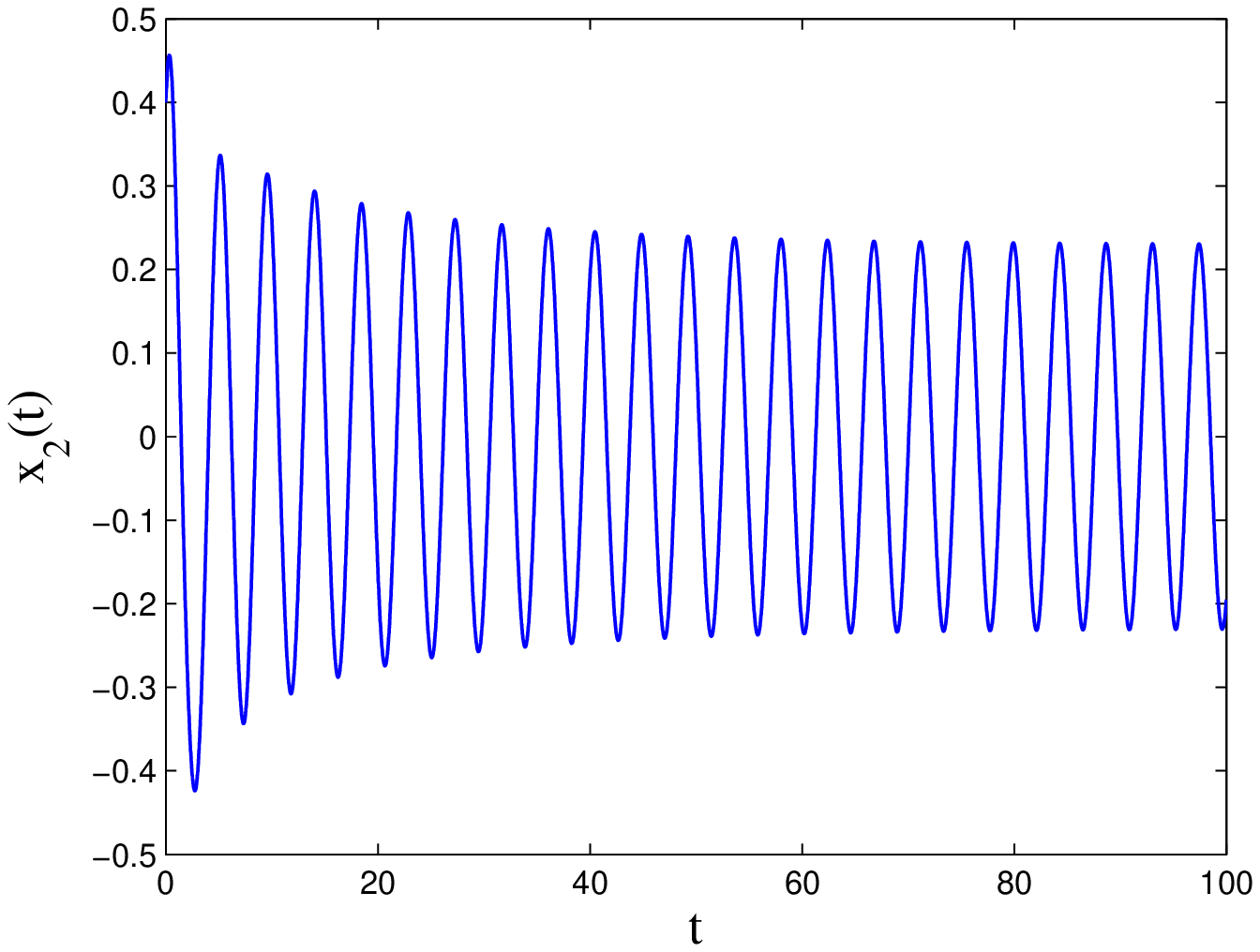}
\includegraphics[width=2in]{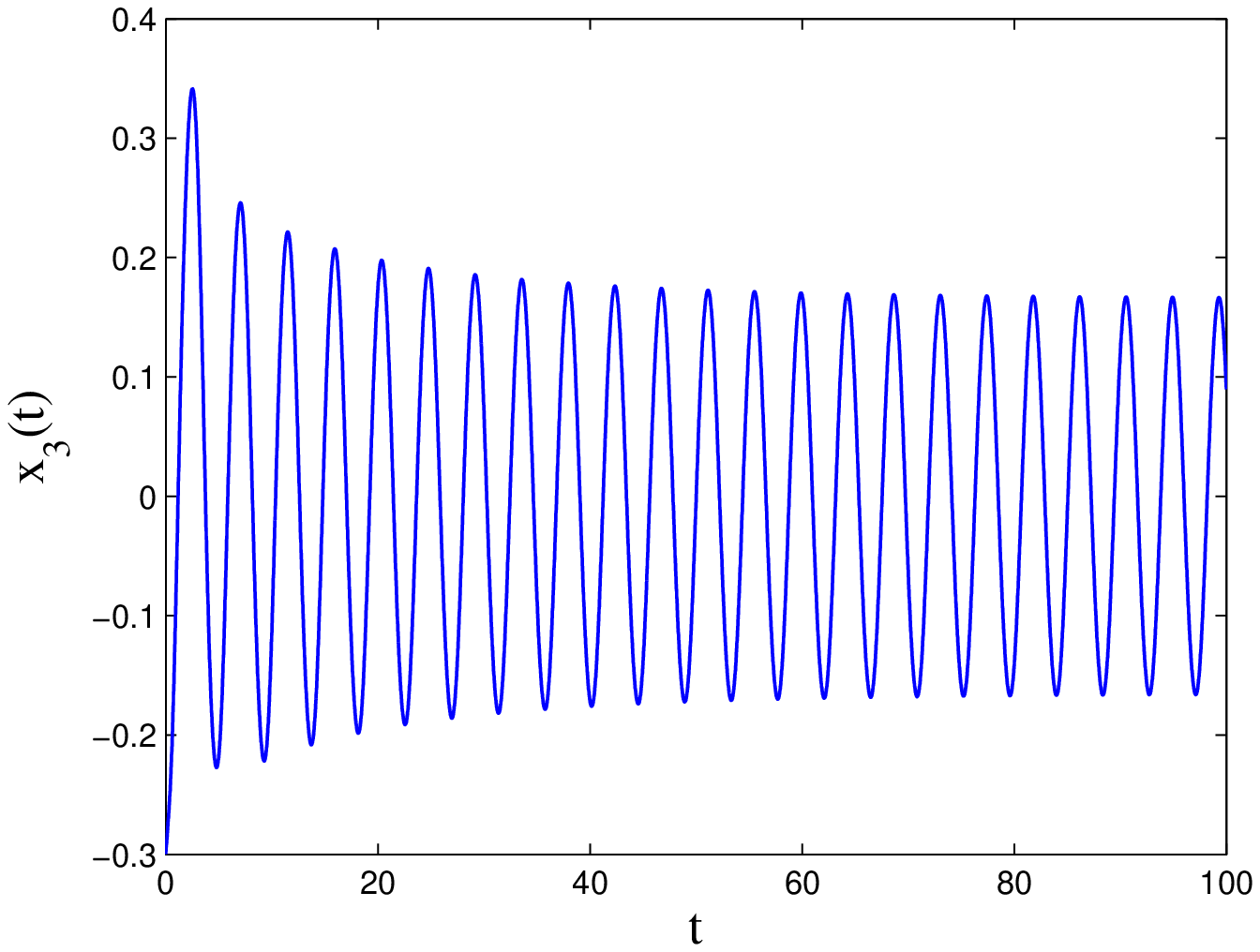}
\includegraphics[width=2in]{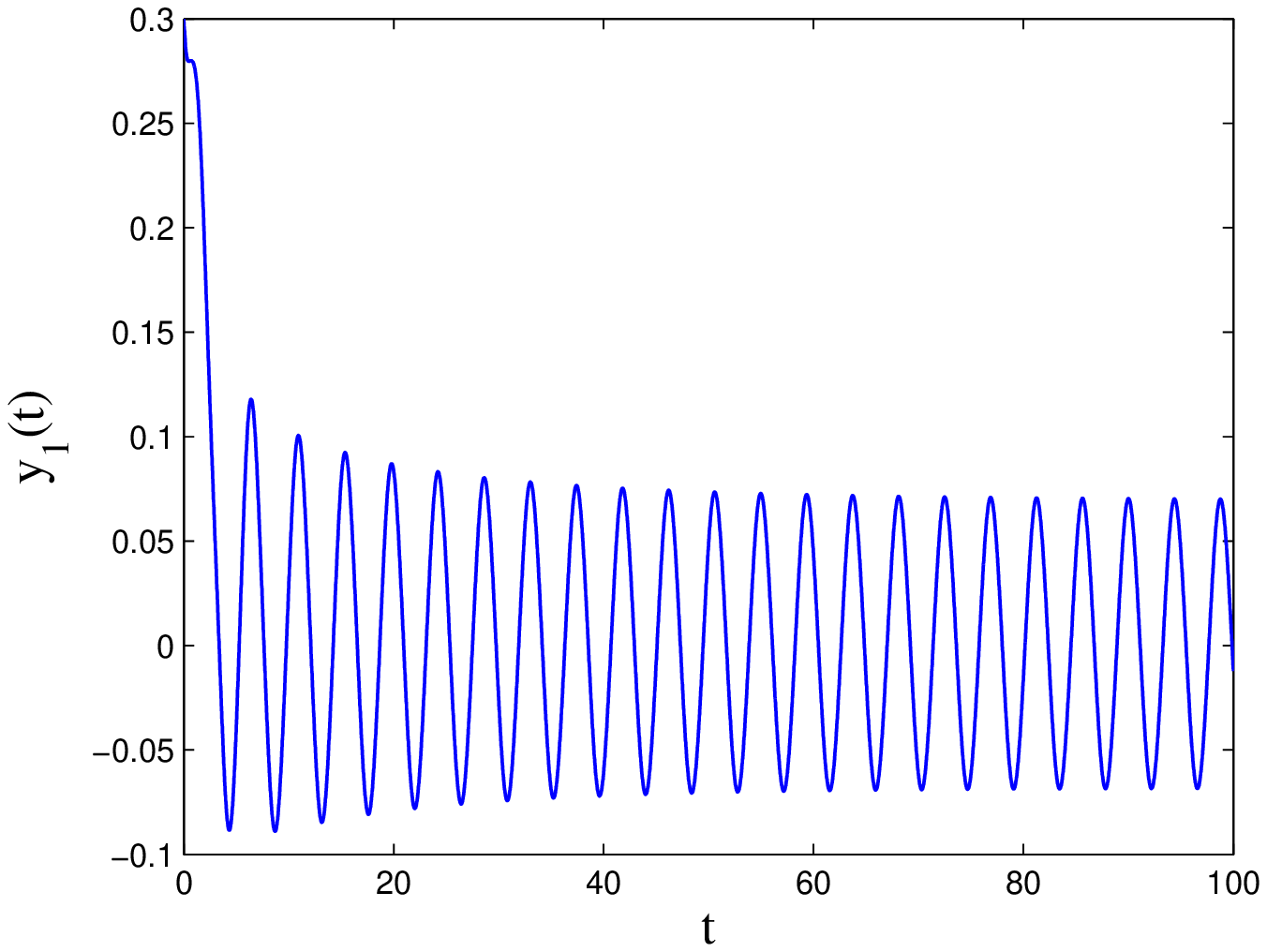}
\includegraphics[width=2in]{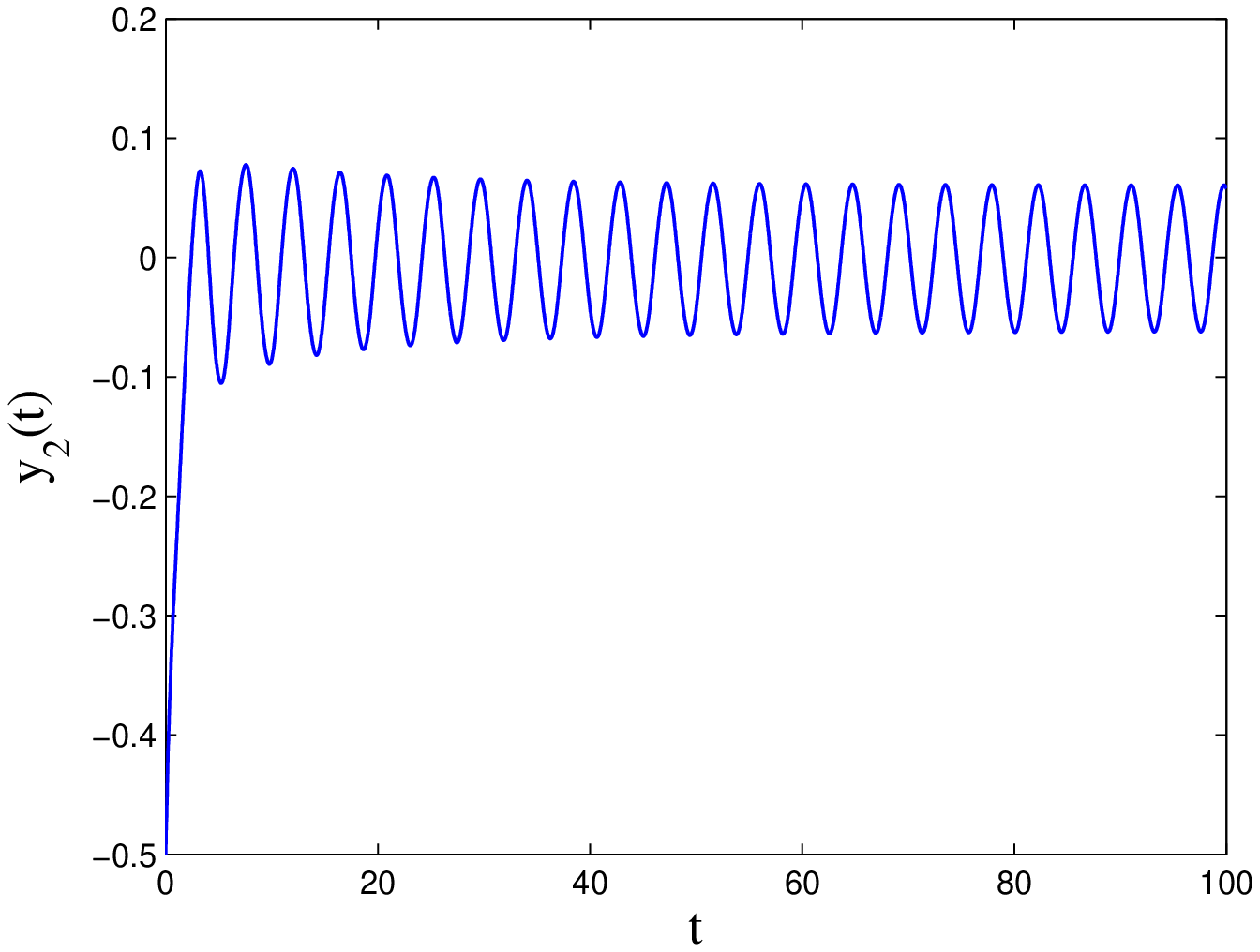}
\includegraphics[width=2in]{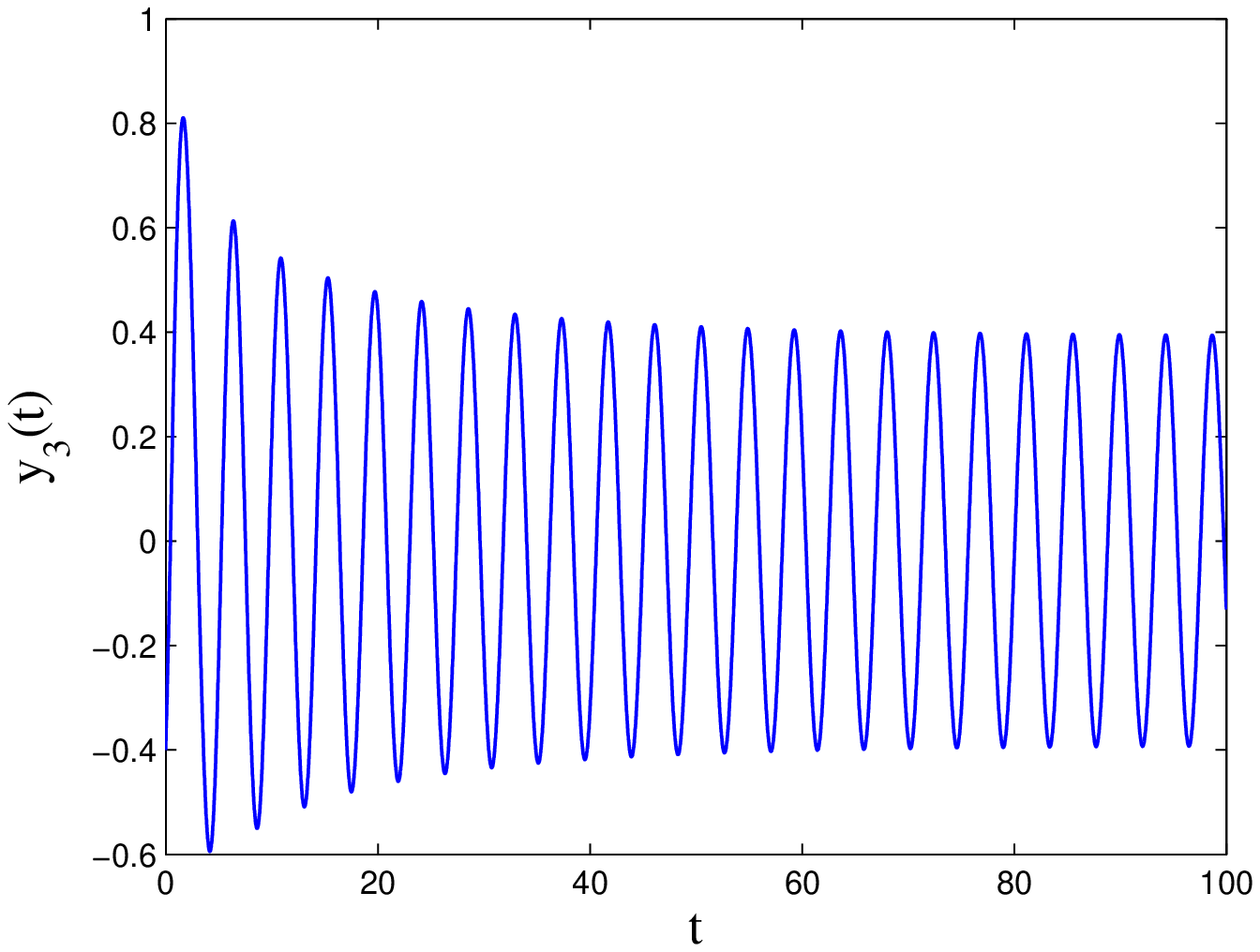}
\caption{\small{The temporal solutions of $x_1(t)$, $x_2(t)$, $x_3(t)$, $y_1(t)$, $y_2(t)$ and $y_3(t)$ versus $t$ of system \eqref{401} with $\theta=0.91$ and $\tau_3=0.15>\tau_0=0.1234$.}}
\label{fig3}
\end{figure}

\subsection{Example 2}

In this example, we fix $\tau_2=0.06<\tau_0=0.0681$ and expound the theoretical results obtained in Section 3.2. The fractional order here is chosen as $\theta=0.91$ and the initial value is selected as $\left(x_1(0),x_2(0),x_3(0),y_1(0),y_2(0),y_3(0)\right)=(-1.4, 0.6,-0.3,0.5,1.2,-0.7)$. From \eqref{318}, \eqref{319} and \eqref{321}, we approximately calculate the bifurcation point $\tau_0^* \approx 0.2613$. In
\begin{figure}[H]
\centering
\includegraphics[width=2in]{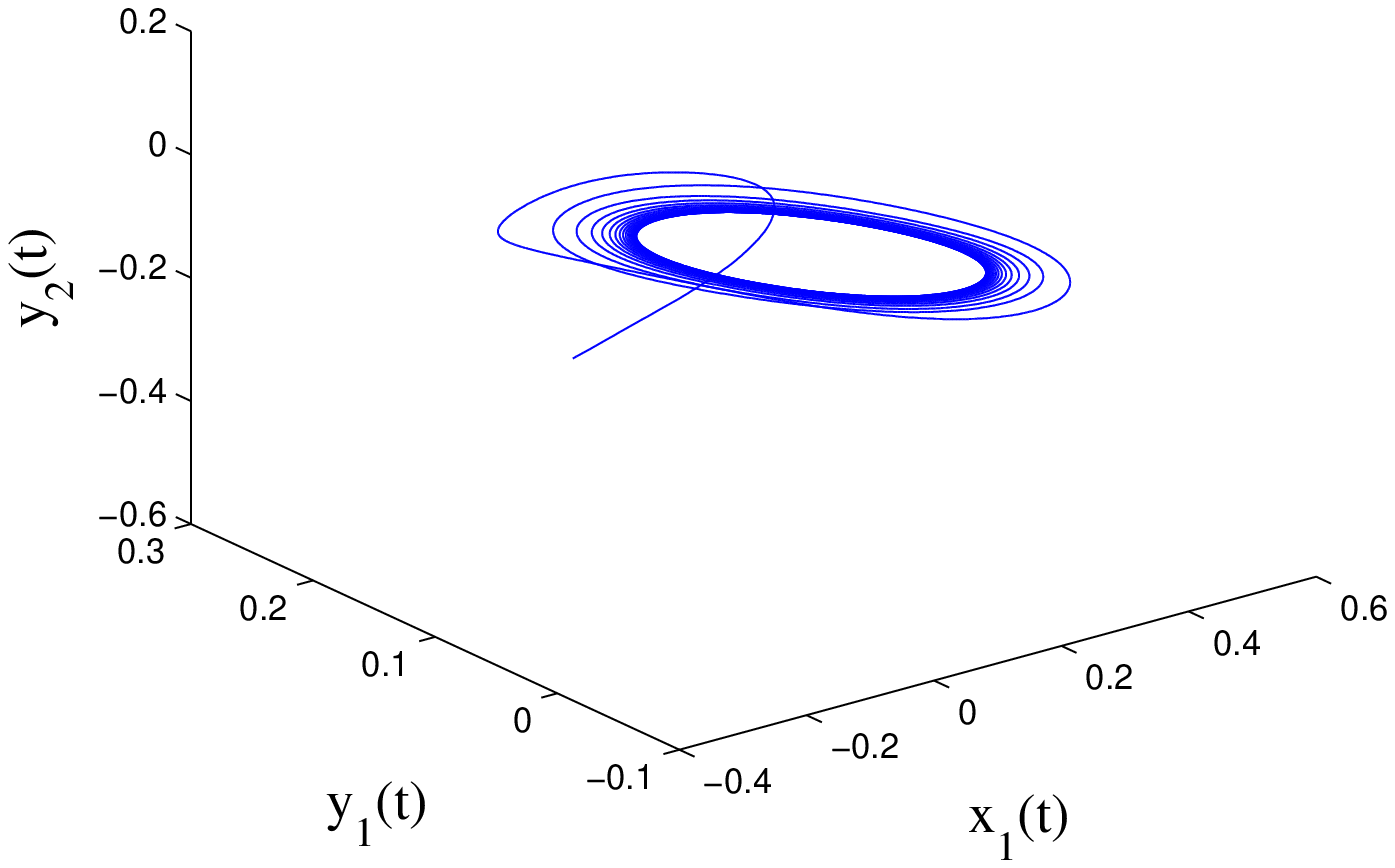}
\includegraphics[width=2in]{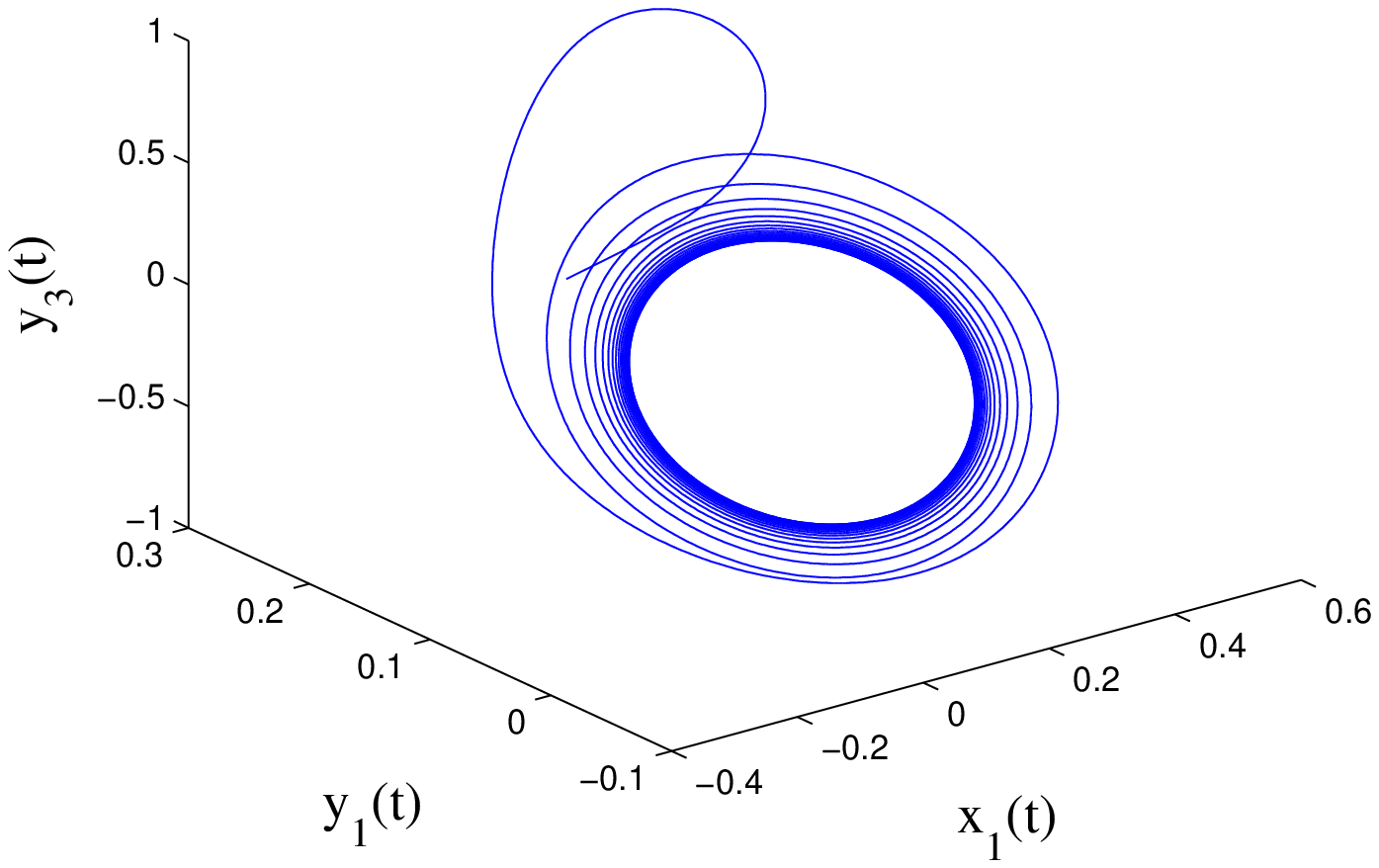}
\includegraphics[width=2in]{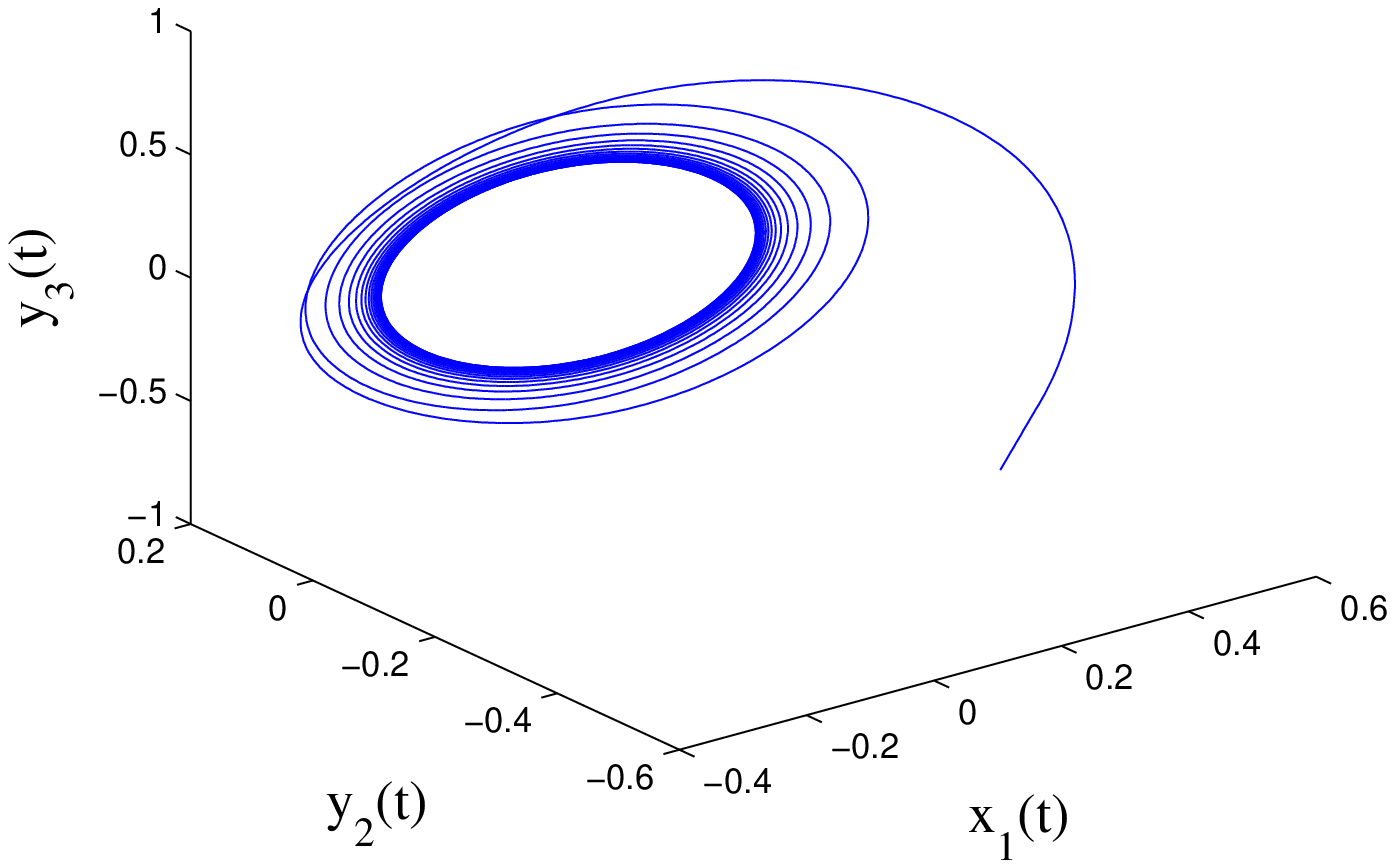}
\includegraphics[width=2in]{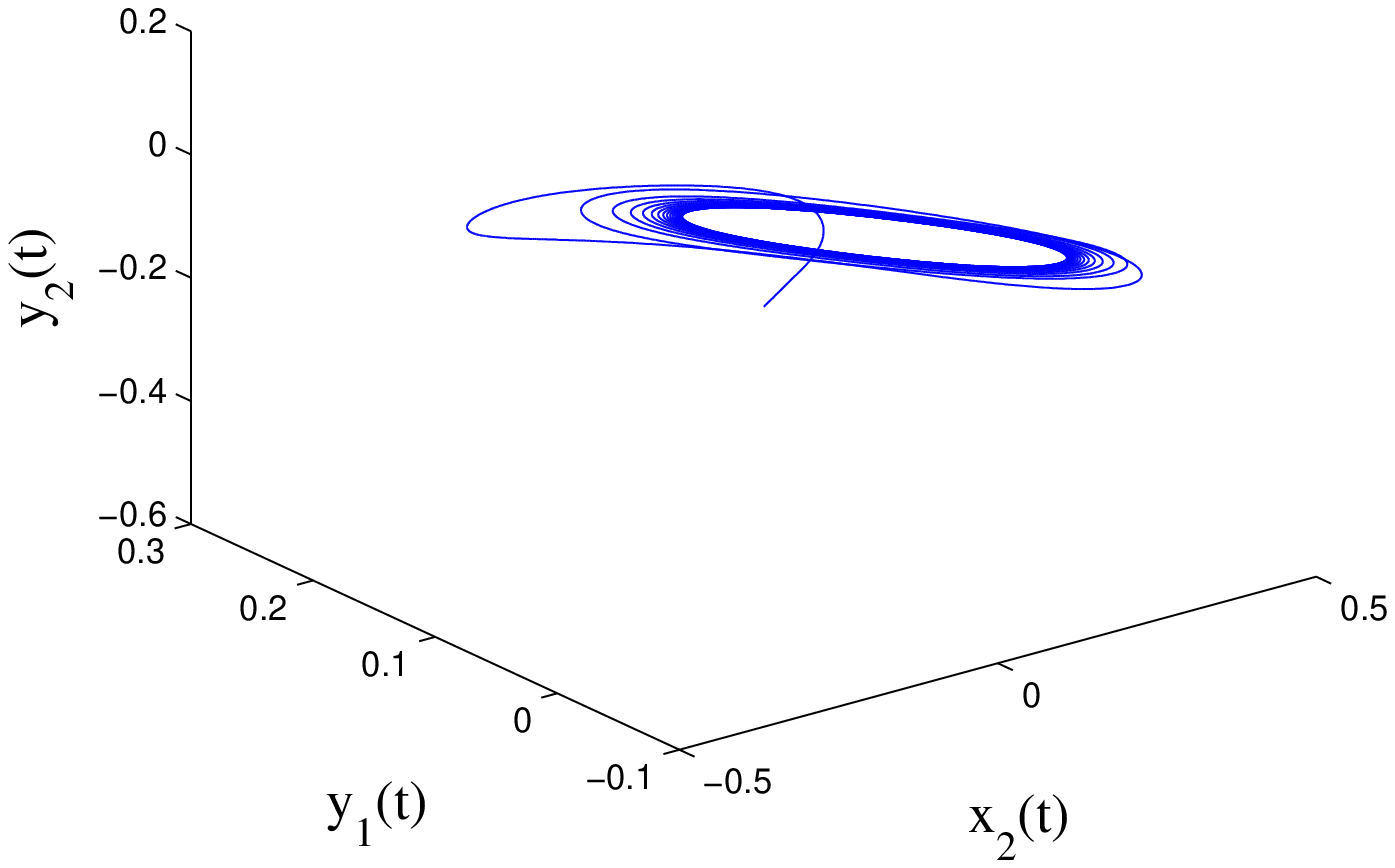}
\includegraphics[width=2in]{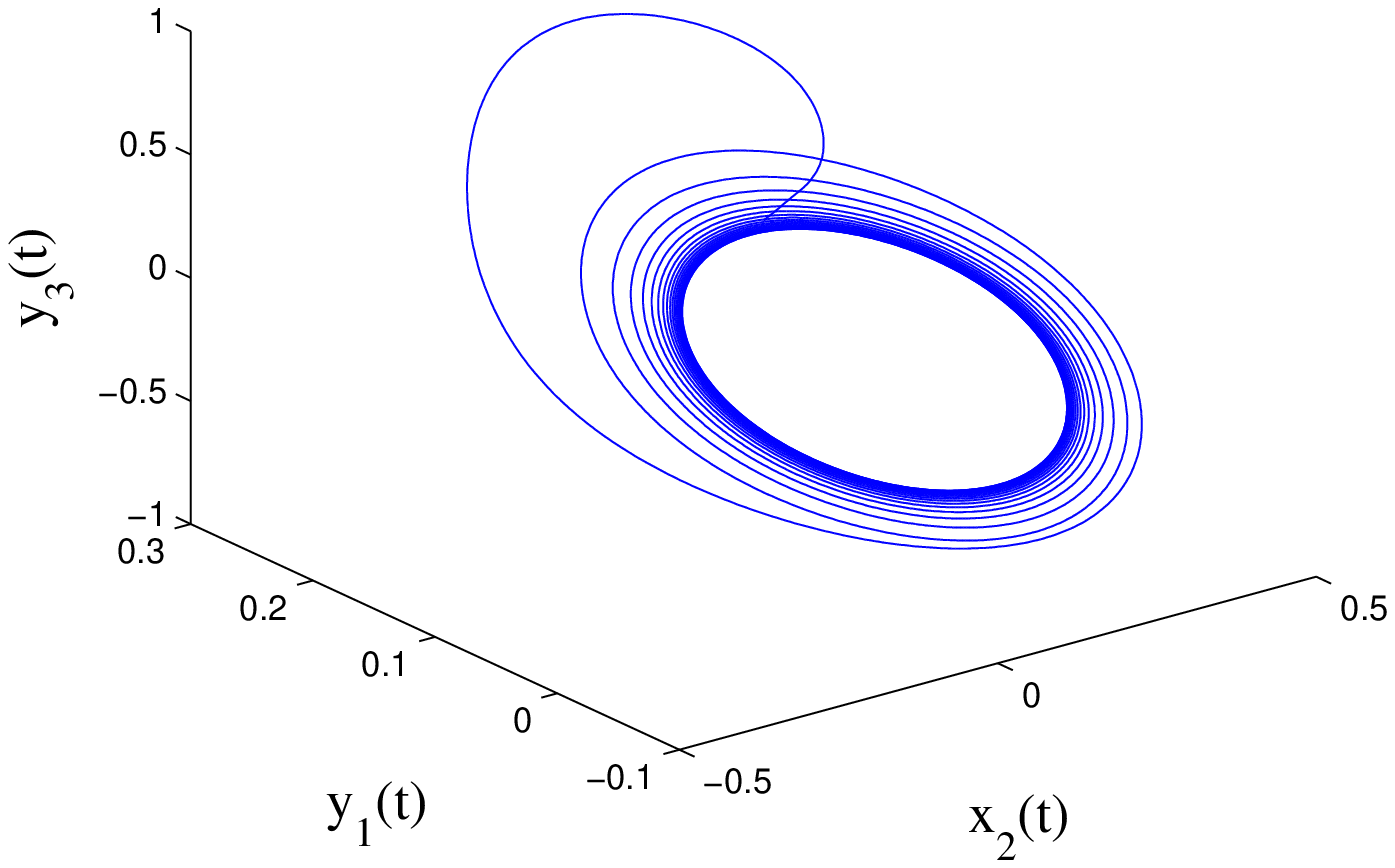}
\includegraphics[width=2in]{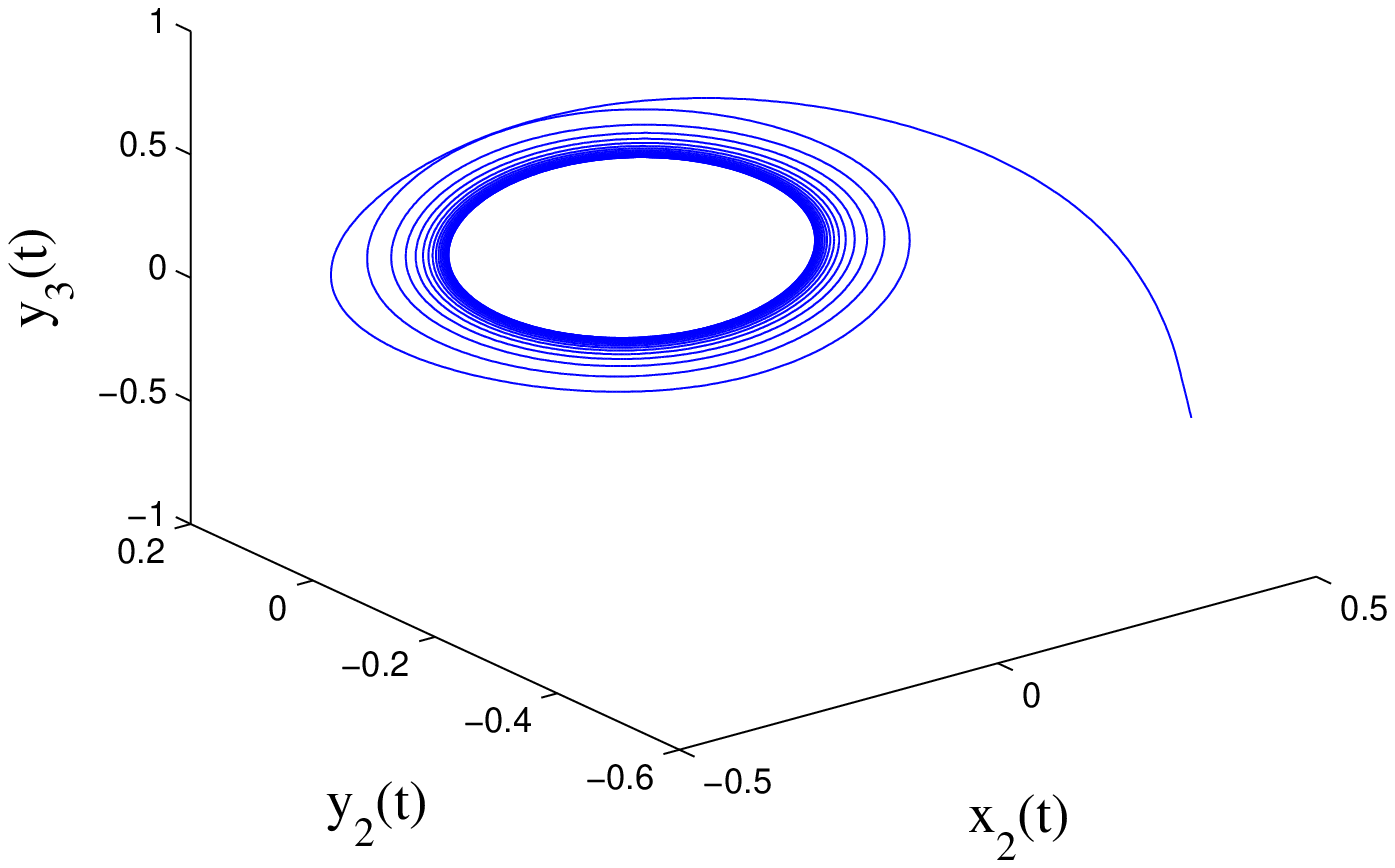}
\caption{\small{Phase diagrams of system \eqref{401} with $\theta=0.91$ and $\tau_3=0.15>\tau_0=0.1234$ projected on $x_1-y_1-y_2$, $x_1-y_1-y_3$, $x_1-y_2-y_3$, $x_2-y_1-y_2$, $x_2-y_1-y_3$ and $x_2-y_2-y_3$, respectively.}}
\label{fig4}
\end{figure}

\begin{figure}[H]
\centering
\includegraphics[width=2in]{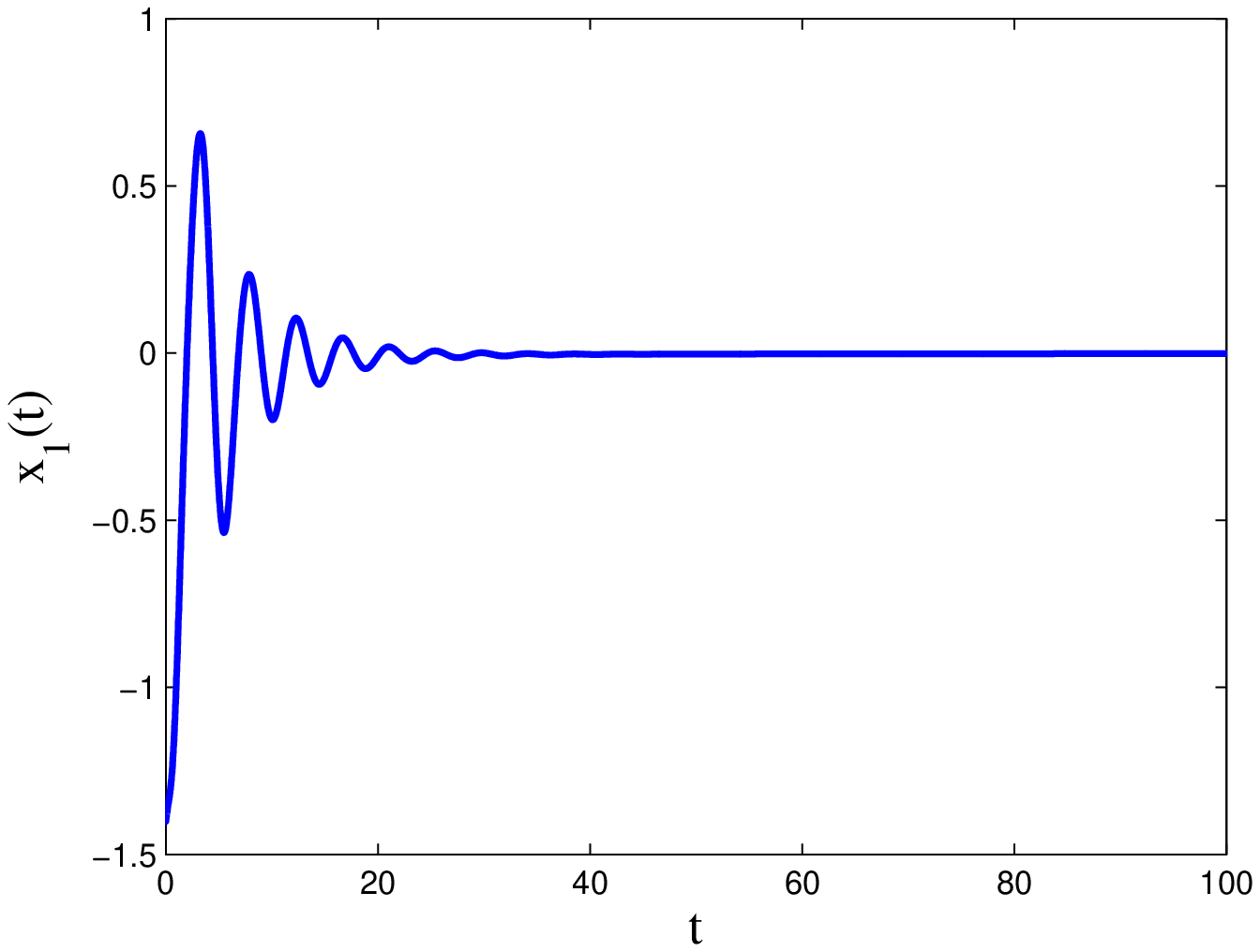}
\includegraphics[width=2in]{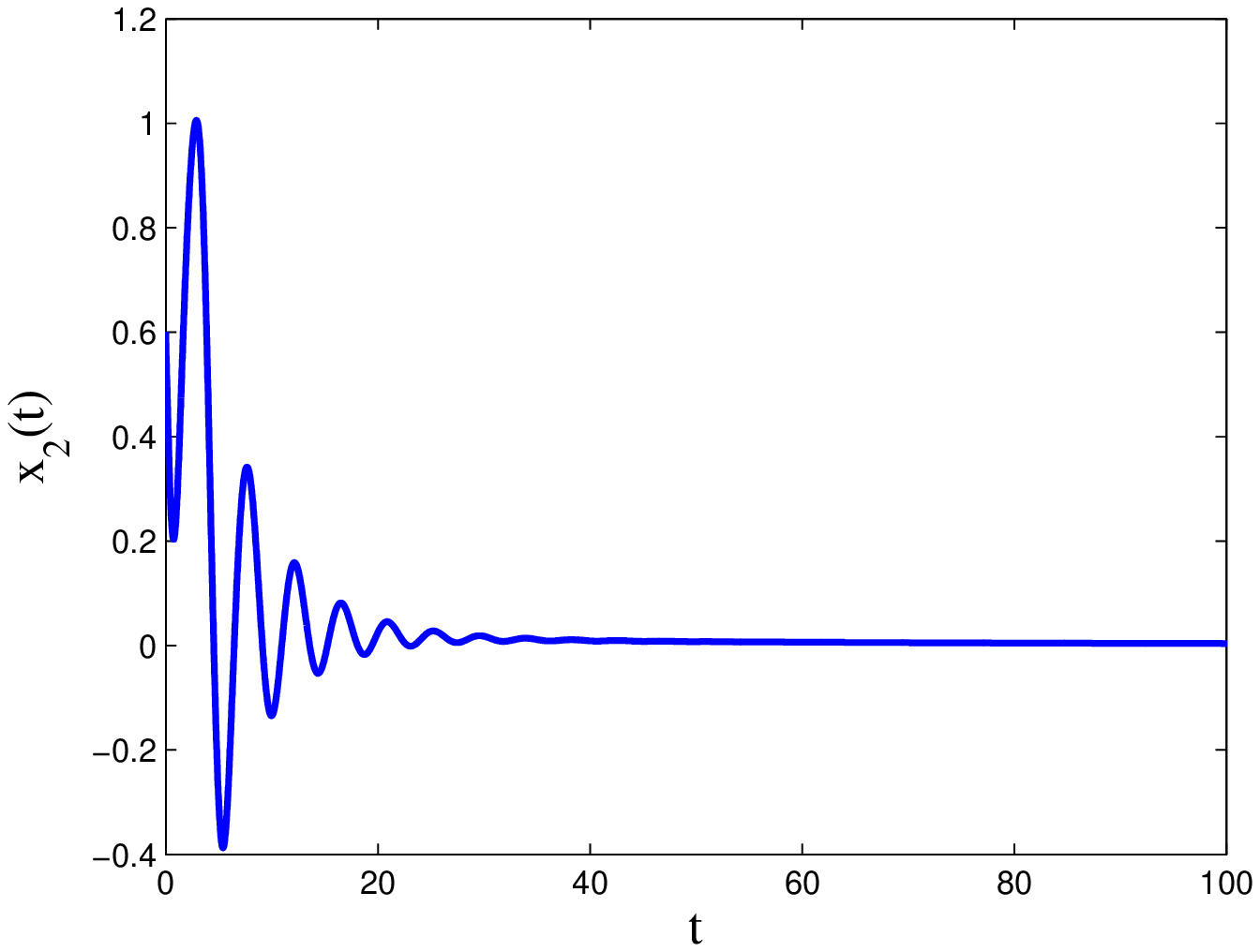}
\includegraphics[width=2in]{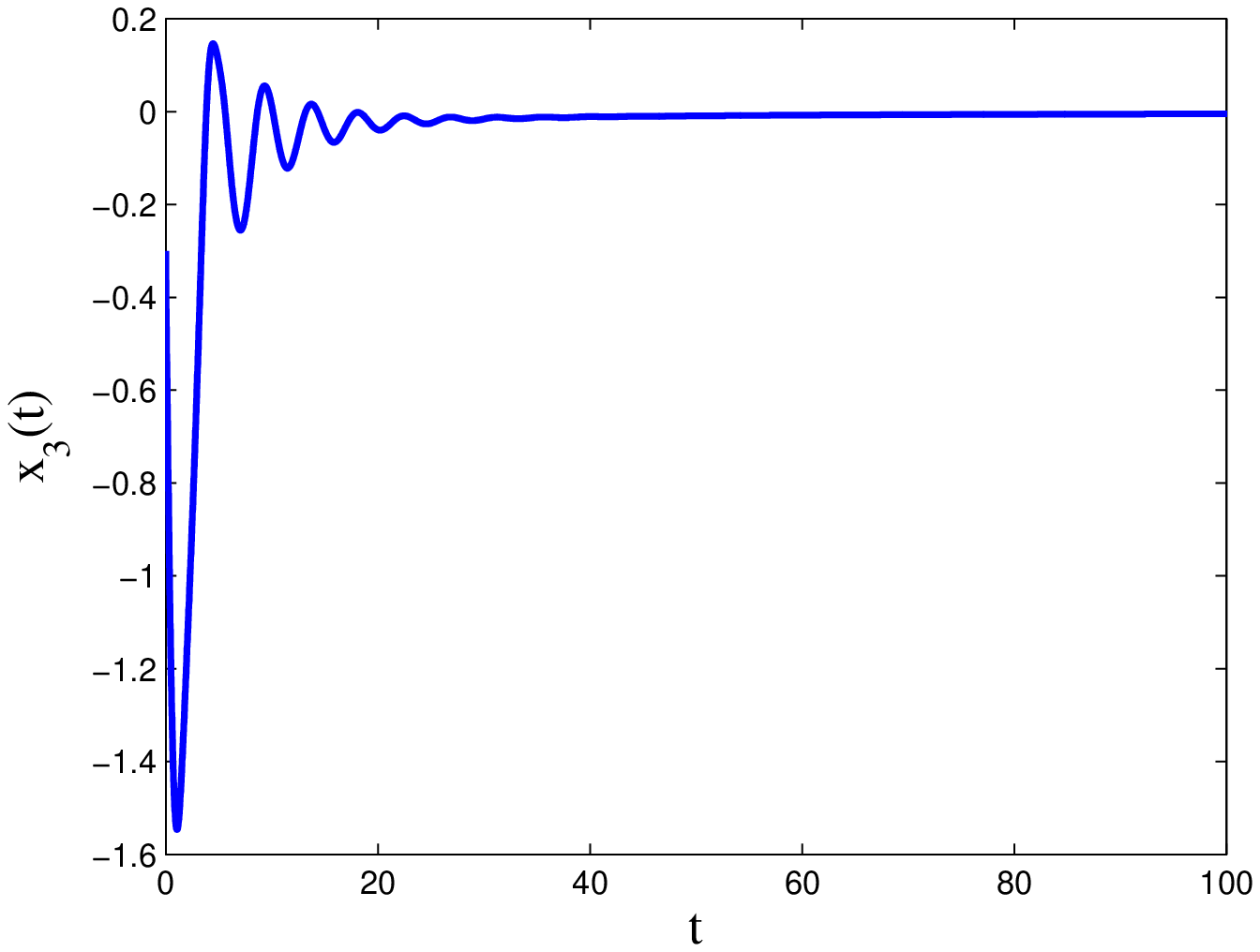}
\includegraphics[width=2in]{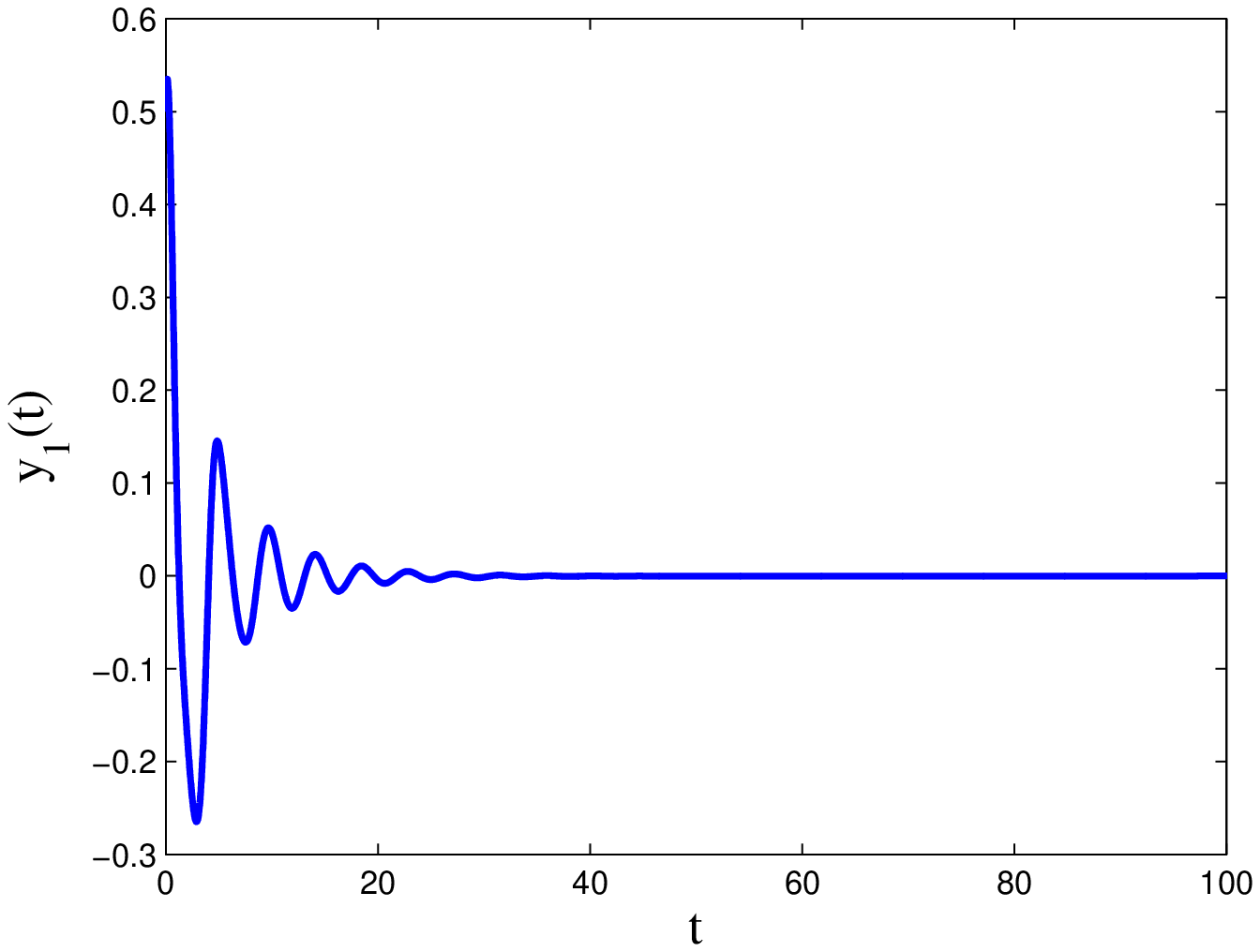}
\includegraphics[width=2in]{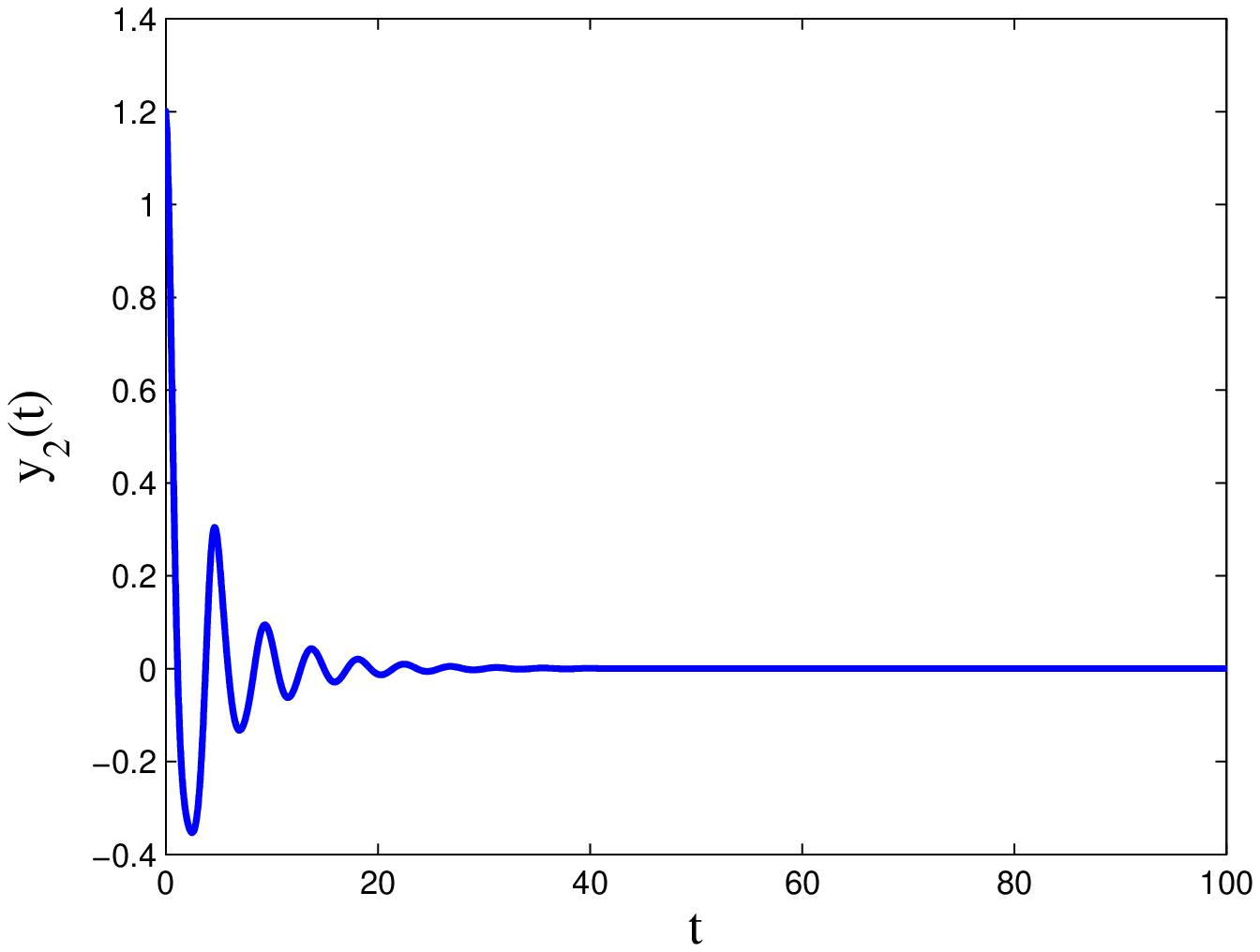}
\includegraphics[width=2in]{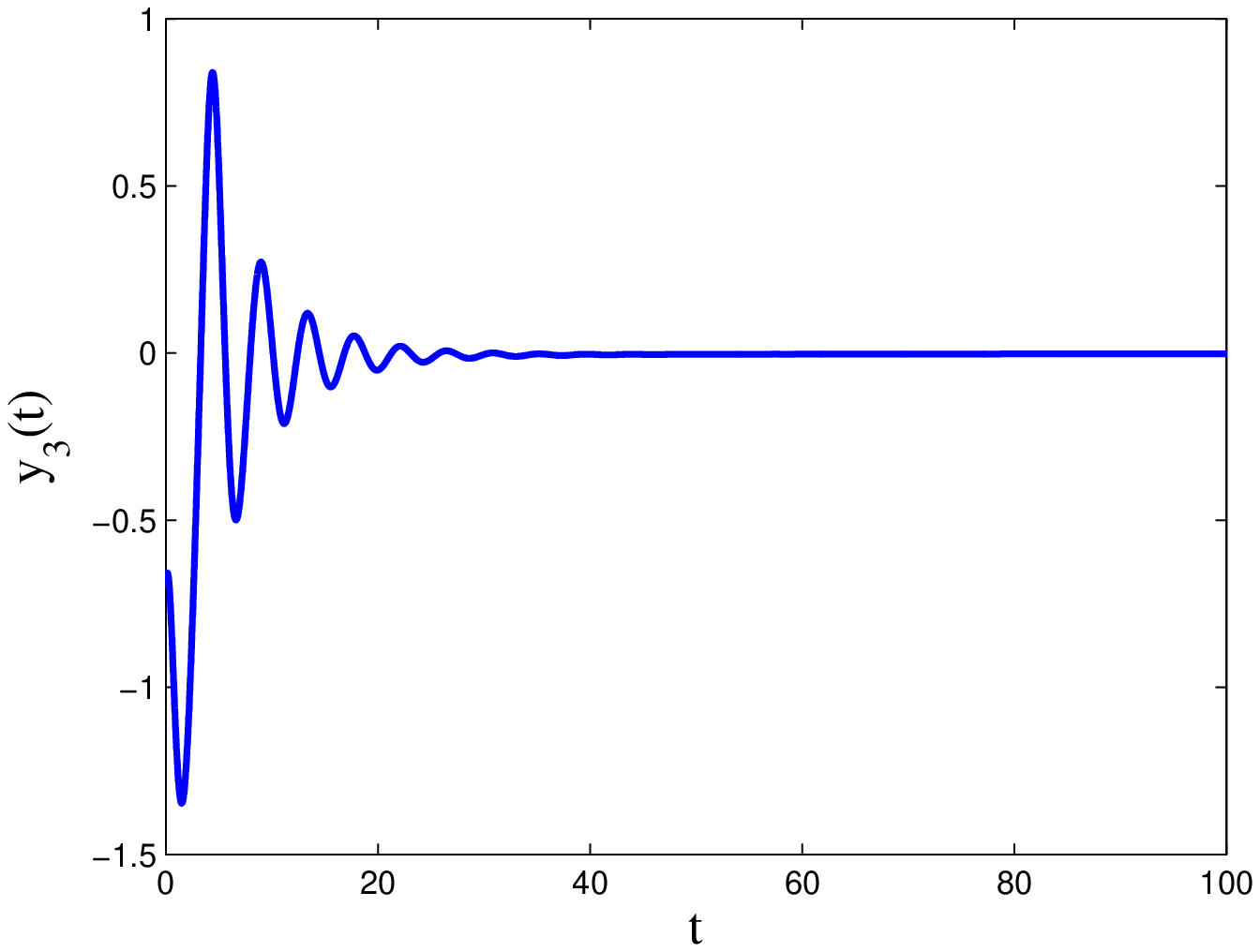}
\caption{\small{The temporal solutions of $x_1(t)$, $x_2(t)$, $x_3(t)$, $y_1(t)$, $y_2(t)$ and $y_3(t)$ versus $t$ of system \eqref{401} with $\theta=0.91$ and $\tau_4=0.2<\tau_0^*=0.2613$.}}
\label{fig5}
\end{figure}

\noindent Fig.\ref{fig5} and Fig.\ref{fig6}, we observe that the zero equilibrium point is locally asymptotically stable when $\tau_4=0.2<\tau_0^*=0.2613$, while, Fig.\ref{fig7} and Fig.\ref{fig8} indicate that the zero equilibrium point lose stability and Hopf bifurcation occurs when $\tau_4=0.36>\tau_0^*=0.2613$. Because $\tau_2=0.06<\tau_0=0.0681$, communication delay only is not strong enough to destabilize system \eqref{401}. But as $\tau_4$ increases from $0.2$ to $0.36$, namely, leakage delay $\tau_1$ increases from $0.46$ to $0.78$, system \eqref{401} occurs a Hopf bifurcation, which illustrates the destabilizing impact of leakage delay directly.

\begin{figure}[H]
\centering
\includegraphics[width=2in]{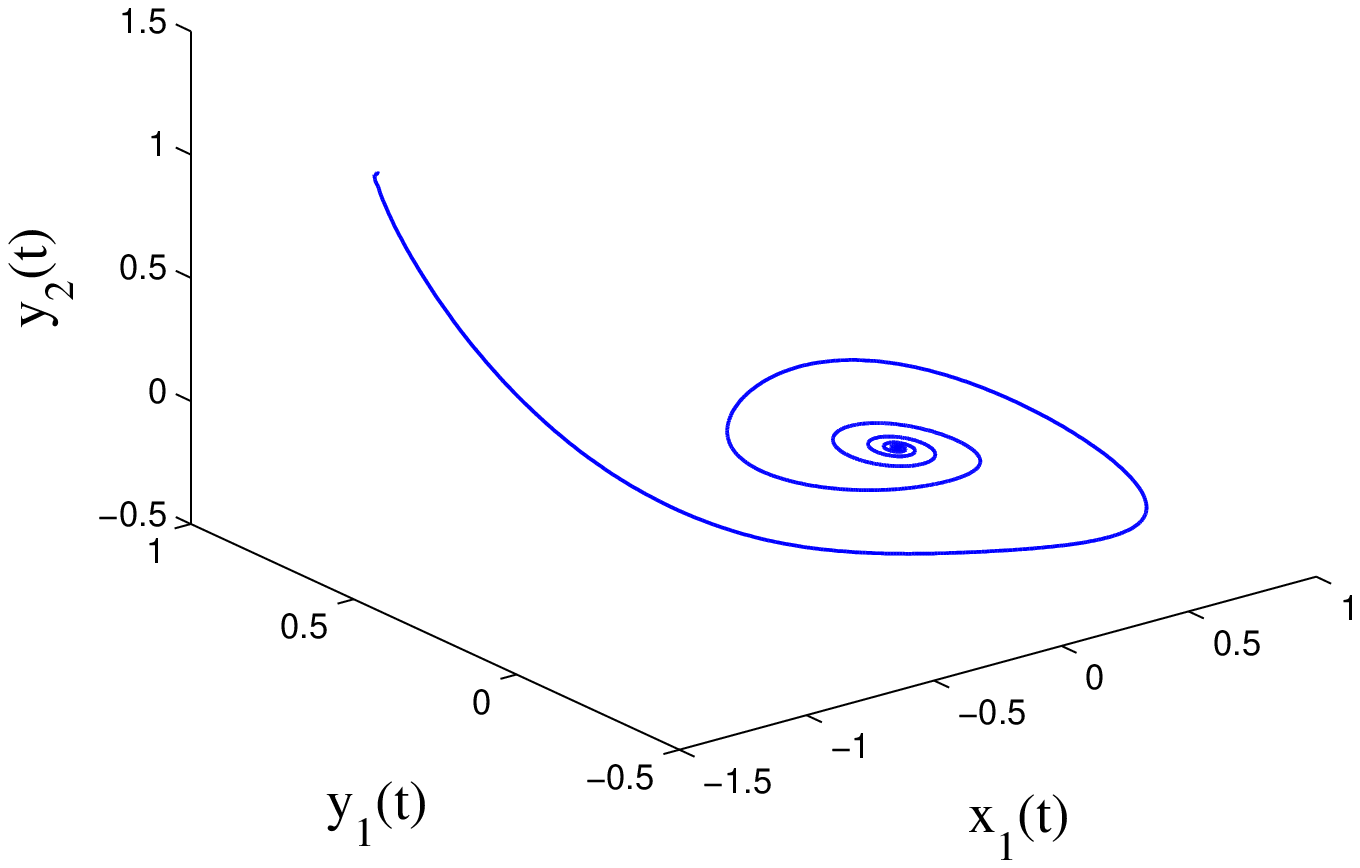}
\includegraphics[width=2in]{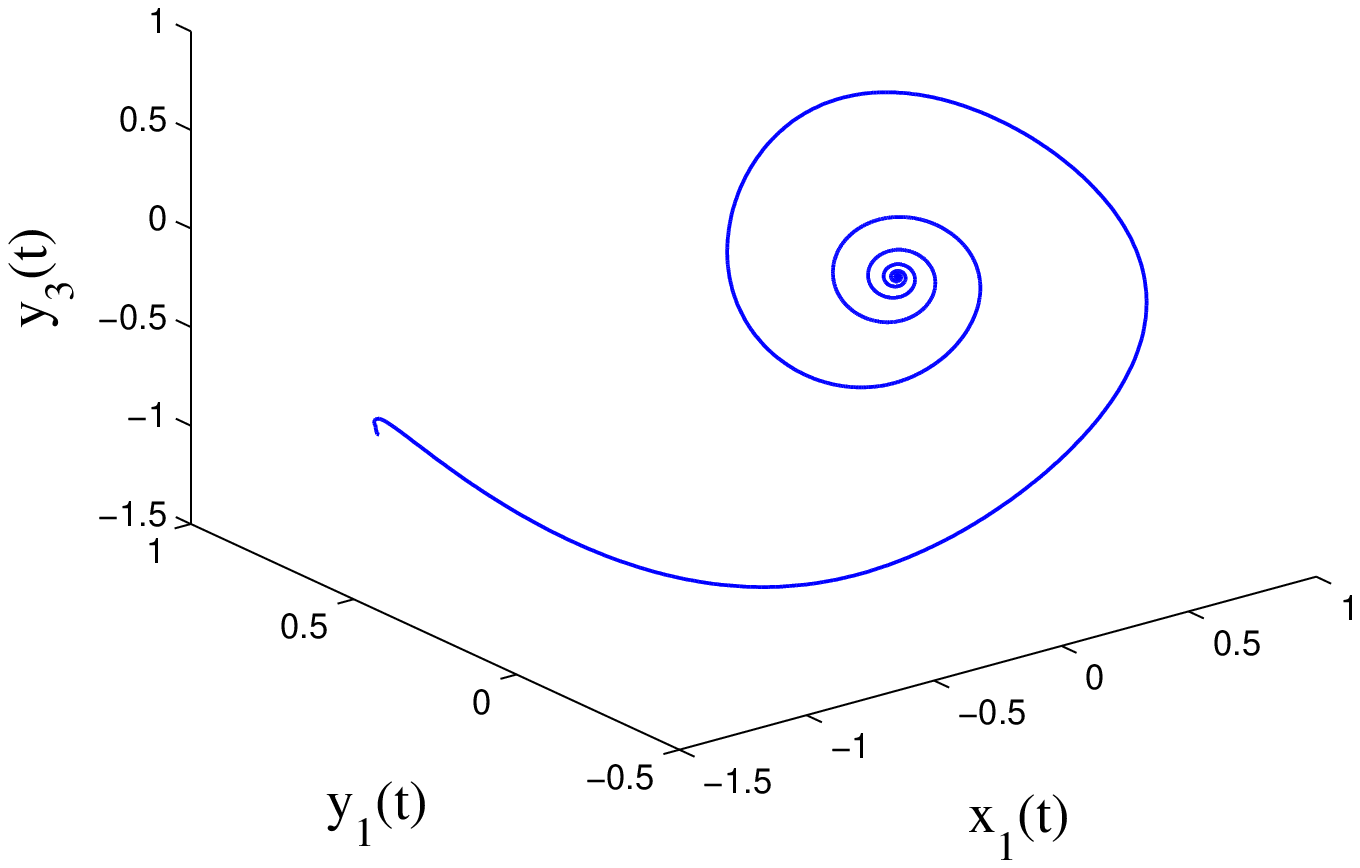}
\includegraphics[width=2in]{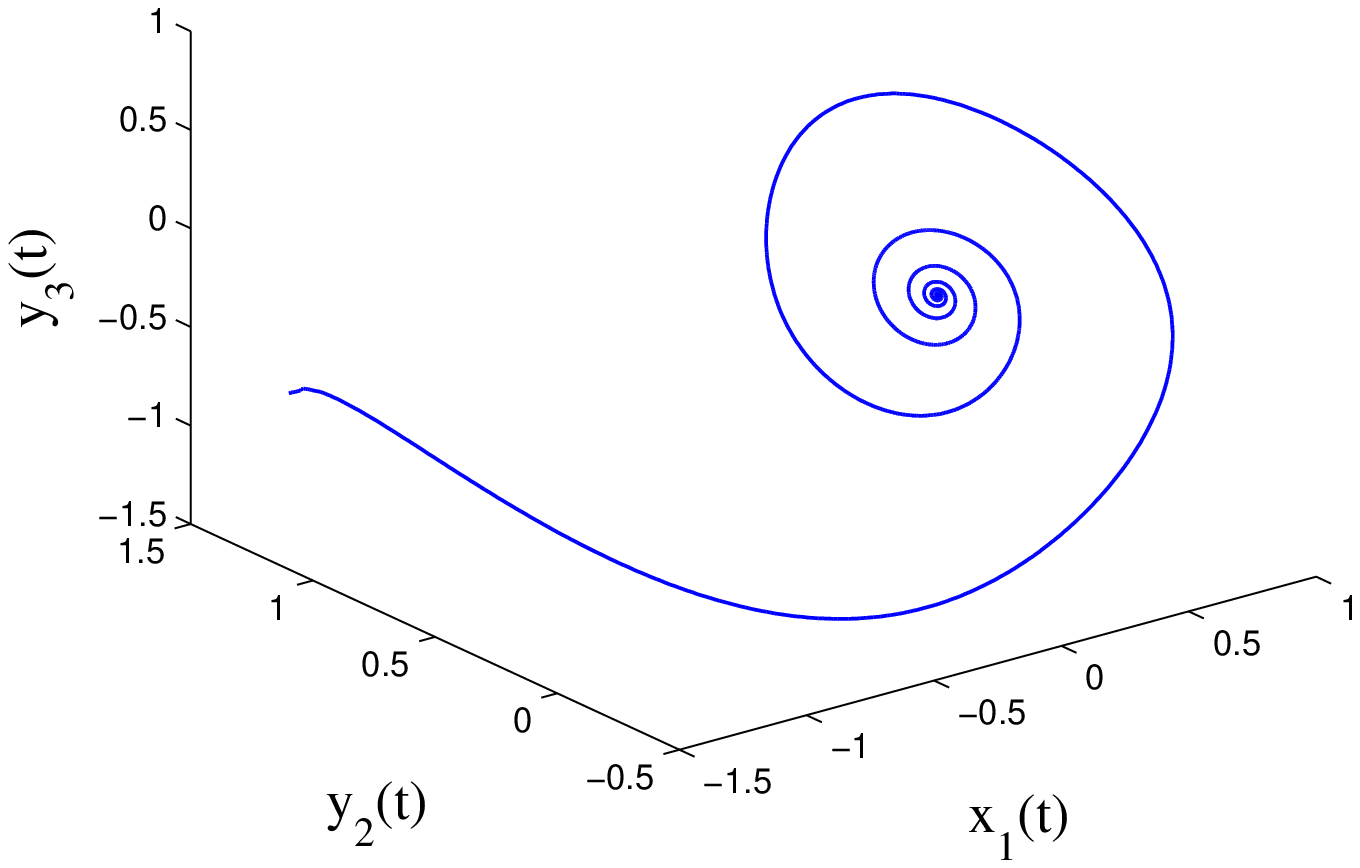}
\includegraphics[width=2in]{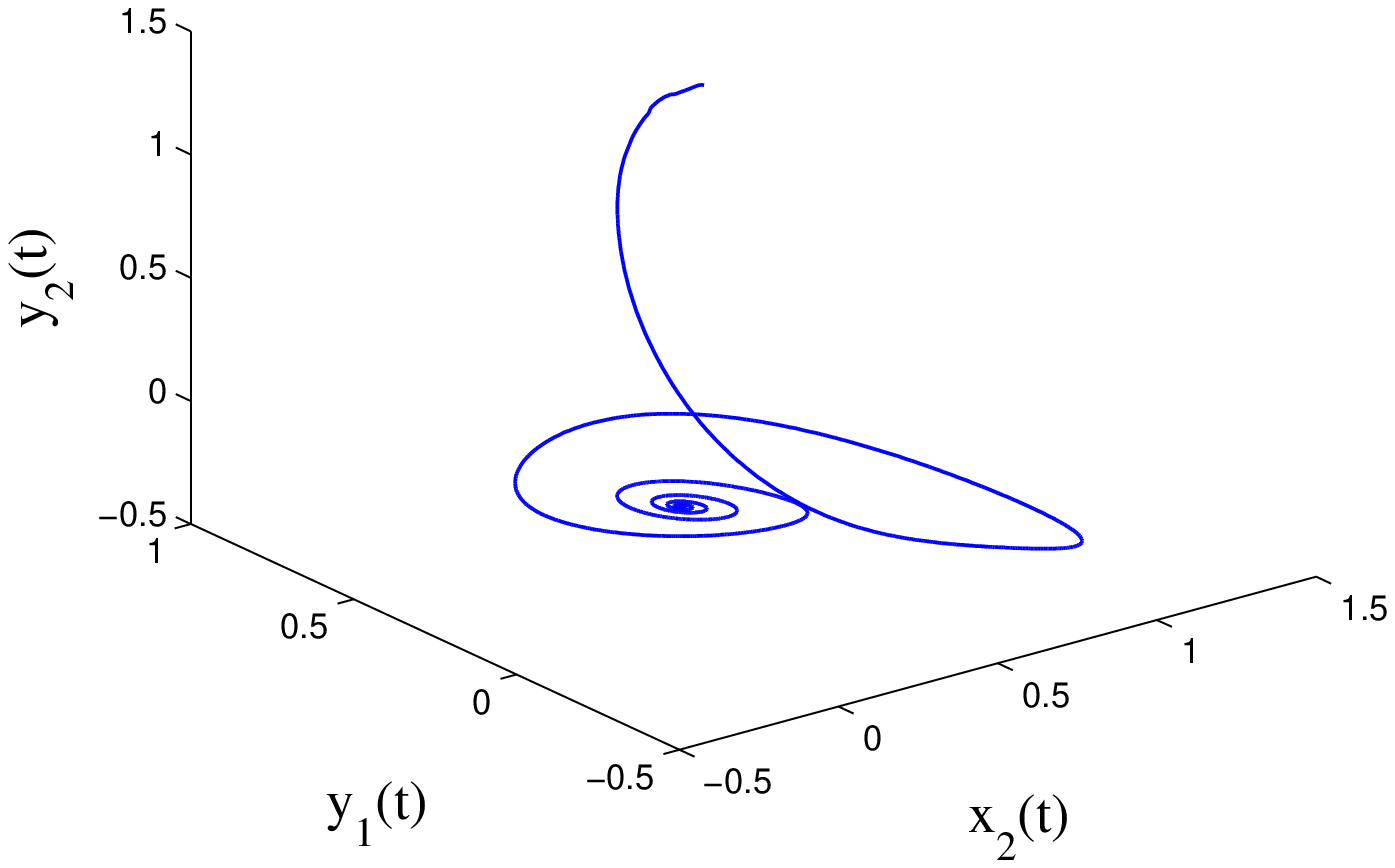}
\includegraphics[width=2in]{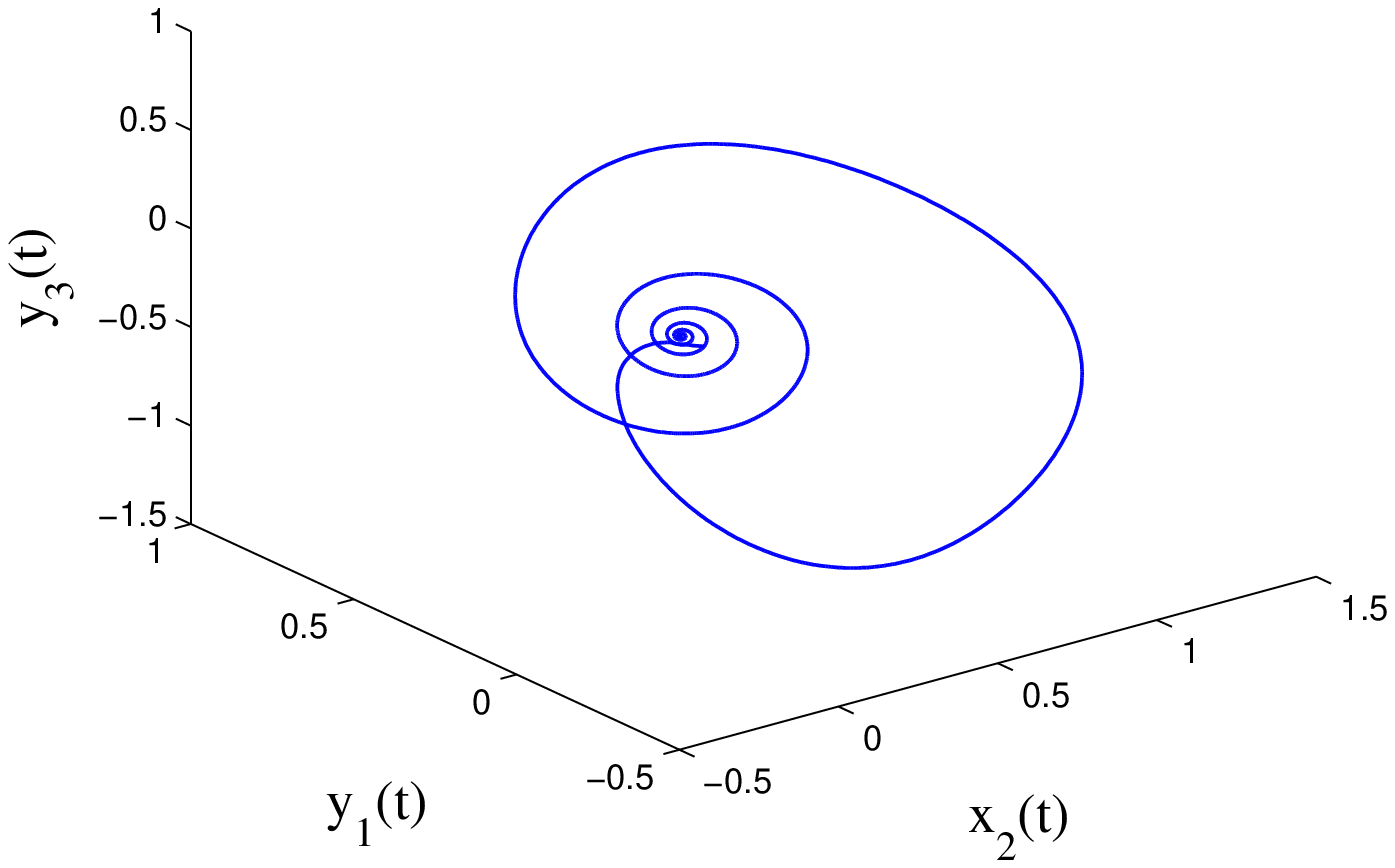}
\includegraphics[width=2in]{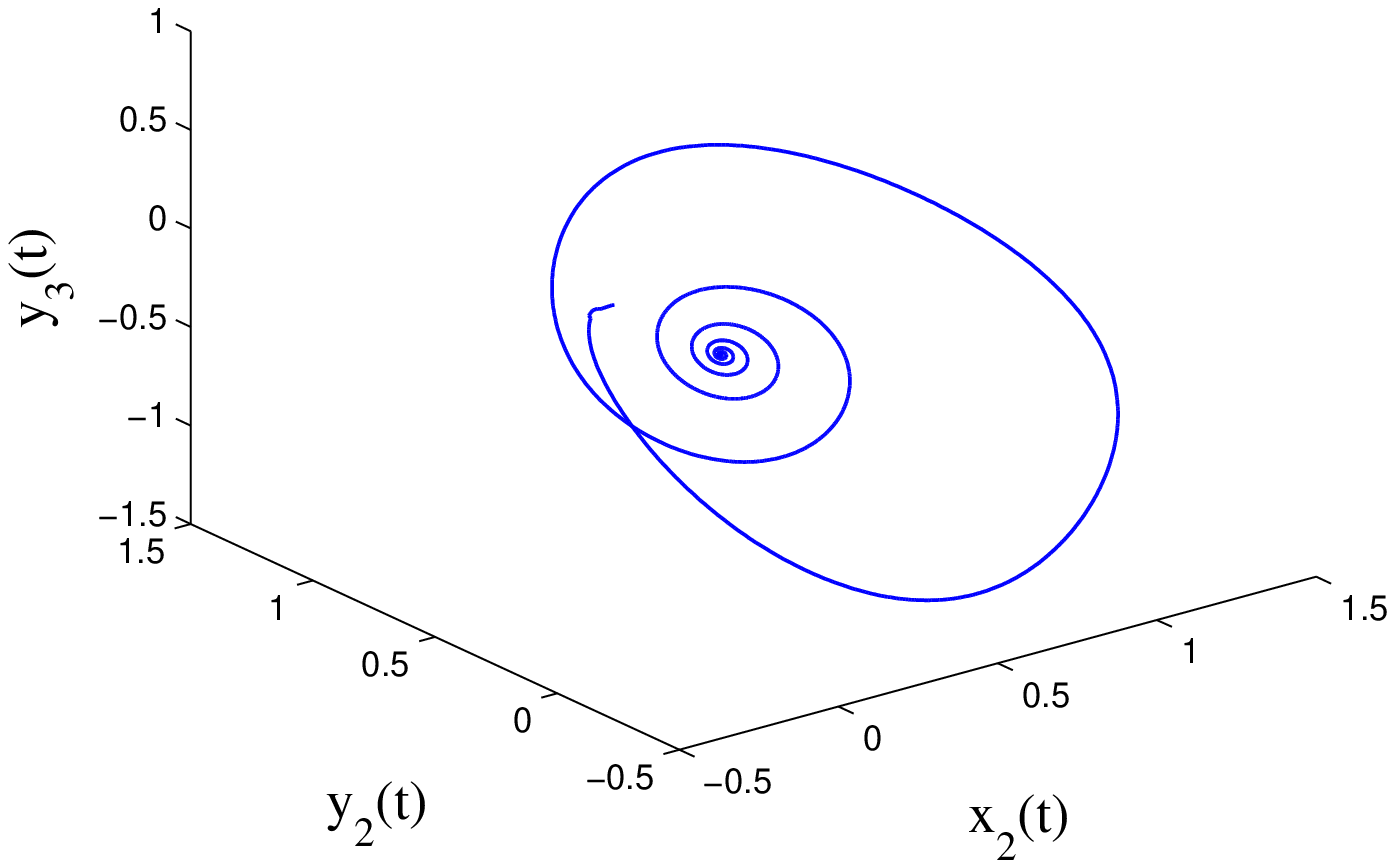}
\caption{\small{Phase diagrams of system \eqref{401} with $\theta=0.91$ and $\tau_4=0.2<\tau_0^*=0.2613$ projected on $x_1-y_1-y_2$, $x_1-y_1-y_3$, $x_1-y_2-y_3$, $x_2-y_1-y_2$, $x_2-y_1-y_3$ and $x_2-y_2-y_3$, respectively.}}
\label{fig6}
\end{figure}

\begin{figure}[H]
\centering
\includegraphics[width=2in]{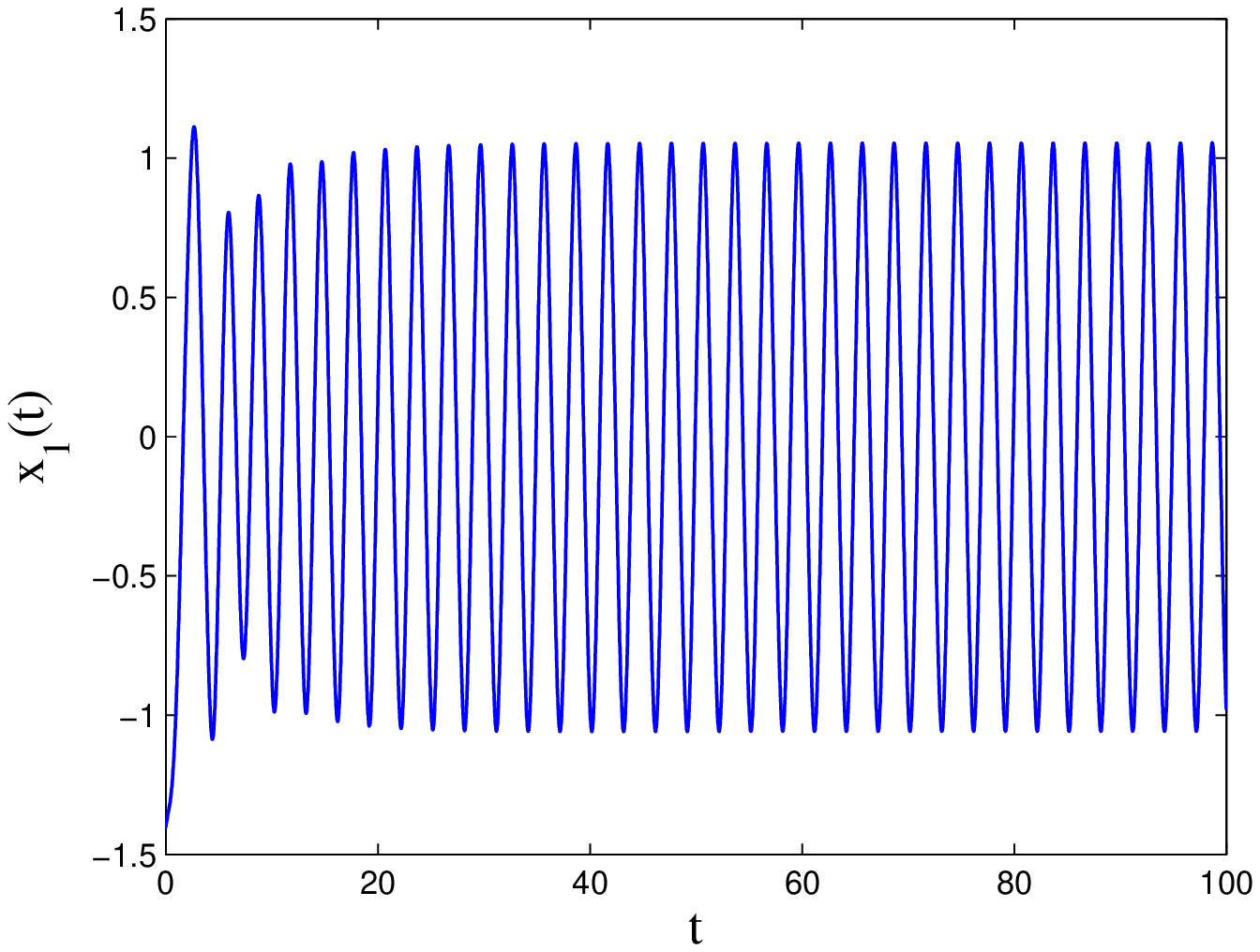}
\includegraphics[width=2in]{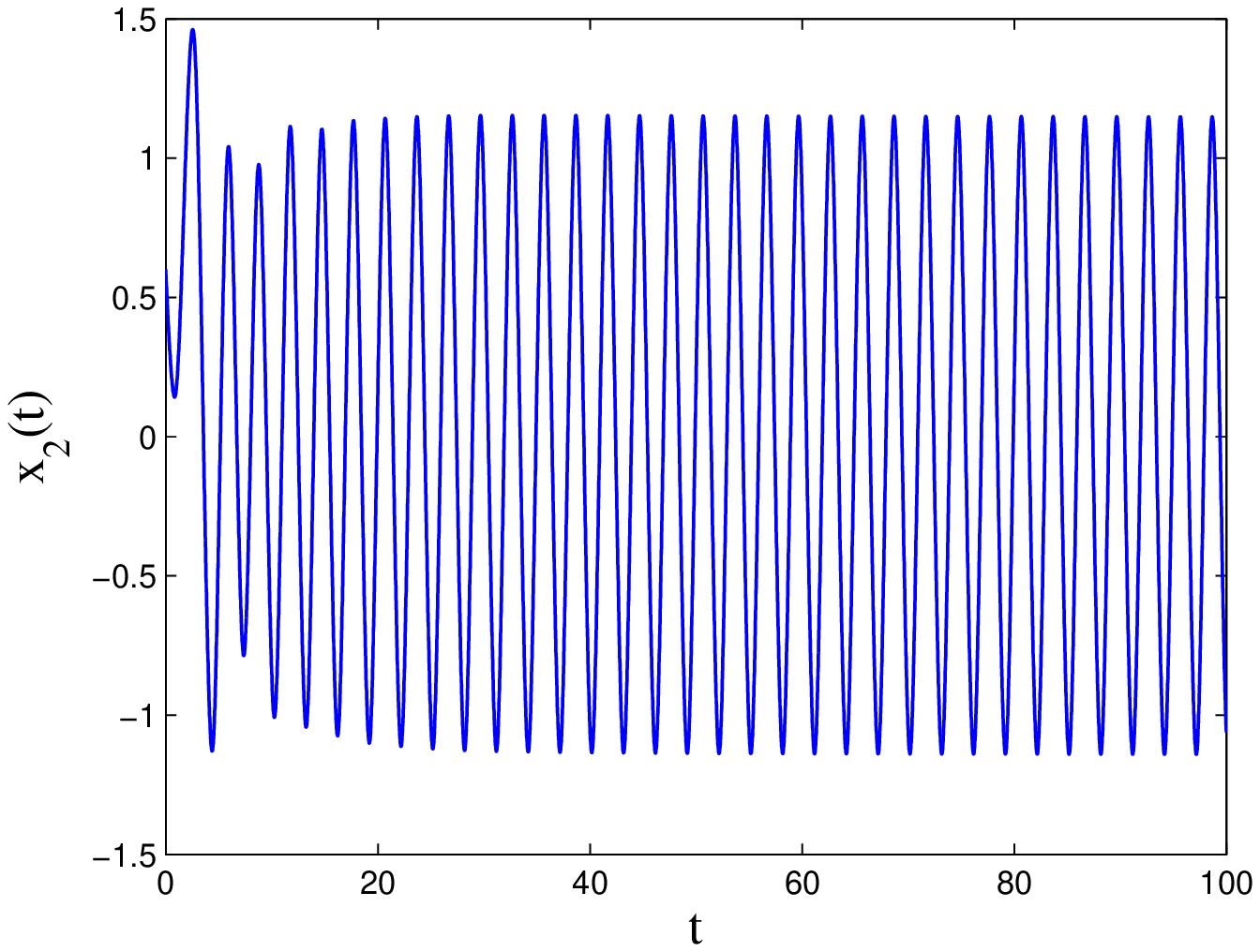}
\includegraphics[width=2in]{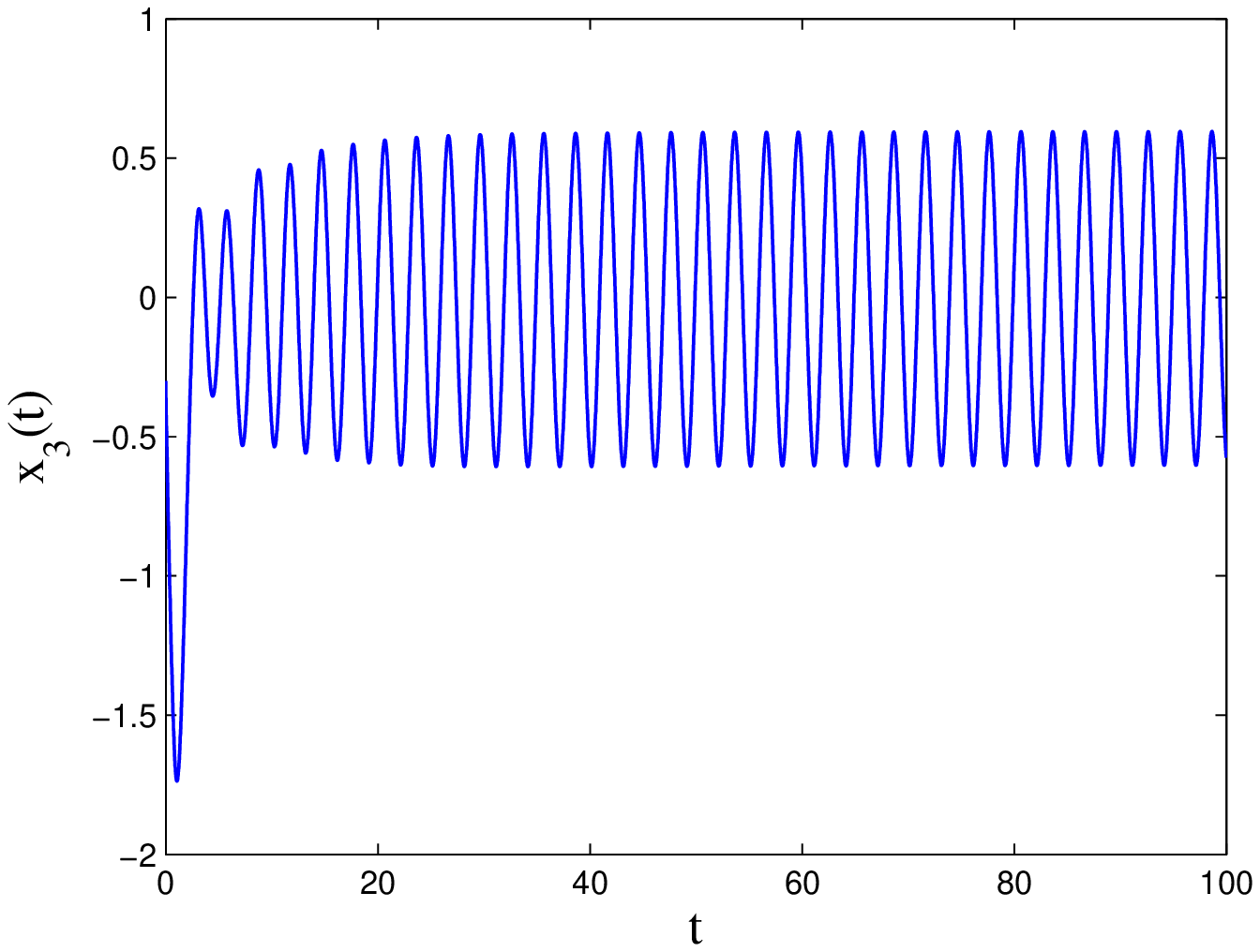}
\includegraphics[width=2in]{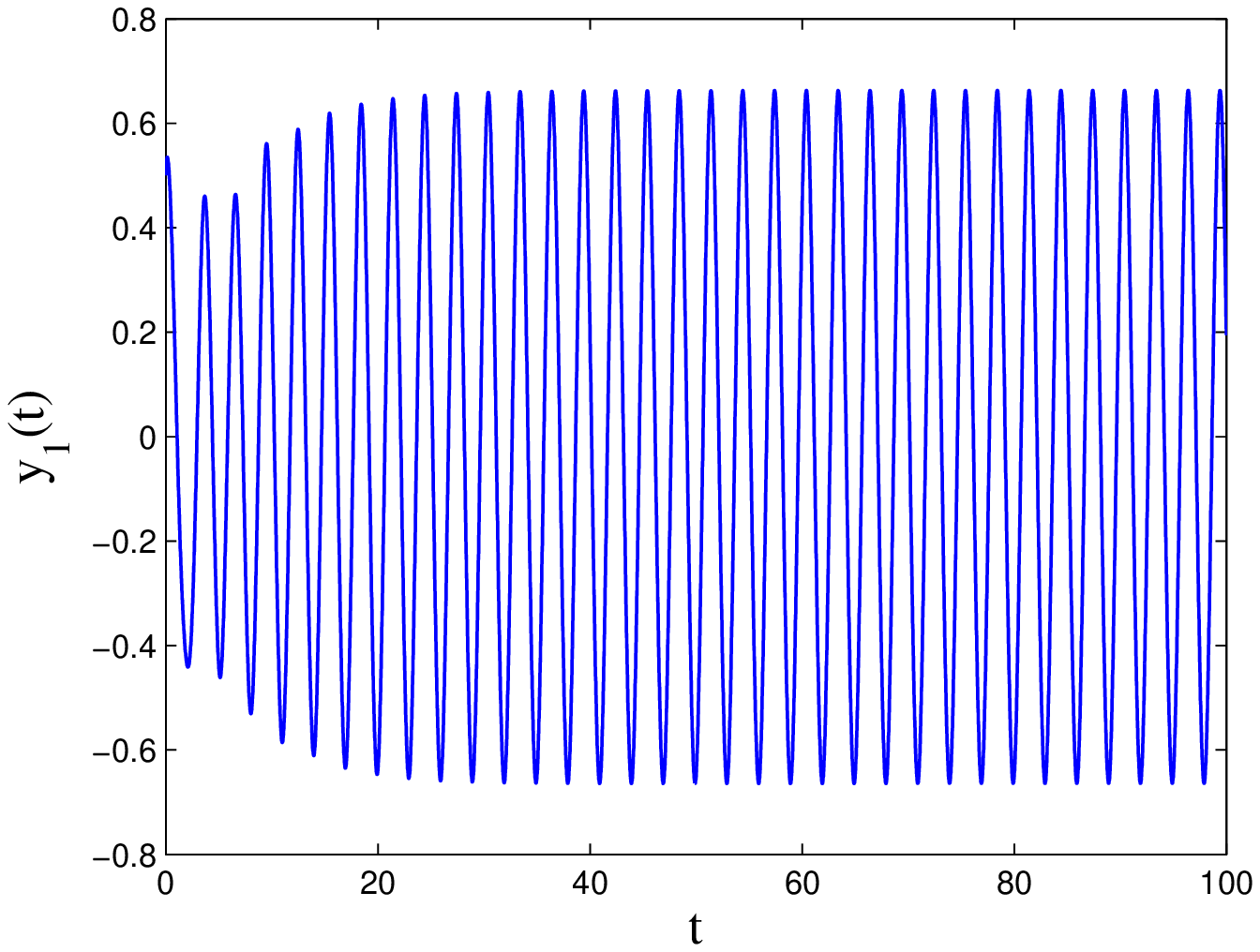}
\includegraphics[width=2in]{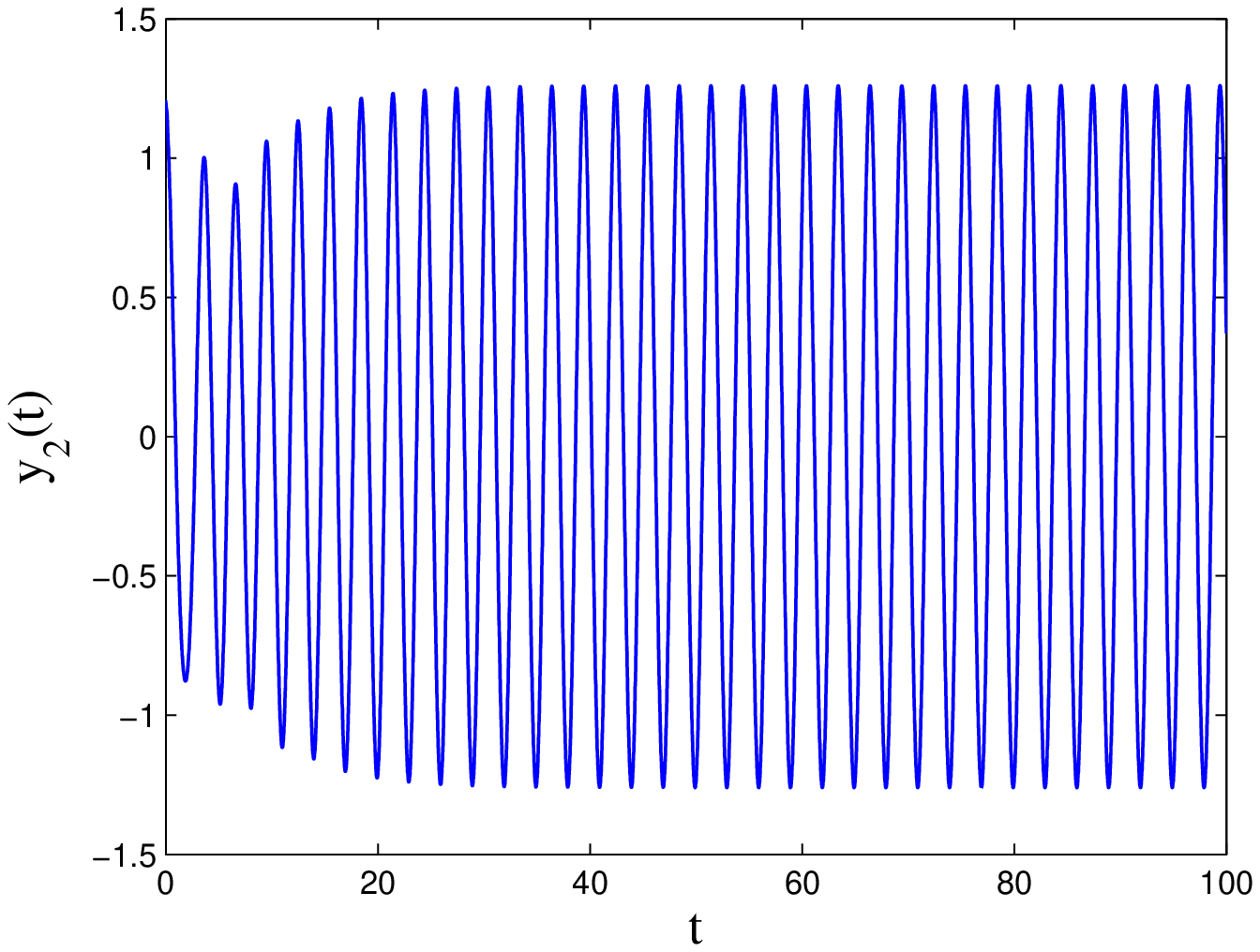}
\includegraphics[width=2in]{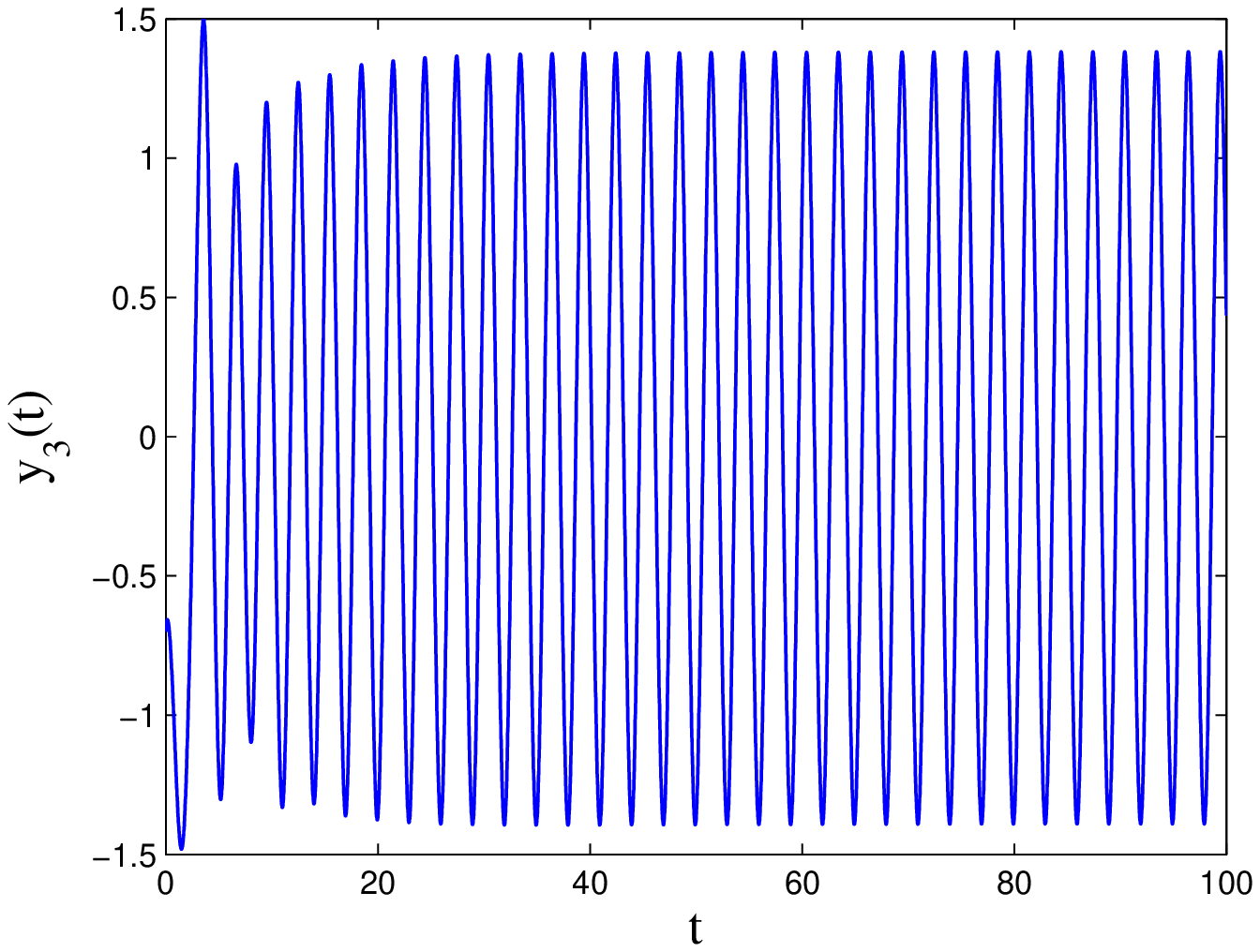}
\caption{\small{The temporal solutions of $x_1(t)$, $x_2(t)$, $x_3(t)$, $y_1(t)$, $y_2(t)$ and $y_3(t)$ versus $t$ of system \eqref{401} with $\theta=0.91$ and $\tau_4=0.36>\tau_0^*=0.2613$.}}
\label{fig7}
\end{figure}

Besides, to reveal the effects of fractional order on the critical frequencies and bifurcation points, we use the system parameters which are the same to those in Example 1. Fig.\ref{fig9} implies that, when the fractional order $\theta$ increases from $0.5$ to $1.0$, the critical frequency $\omega_0$

\begin{figure}[H]
\centering
\includegraphics[width=2in]{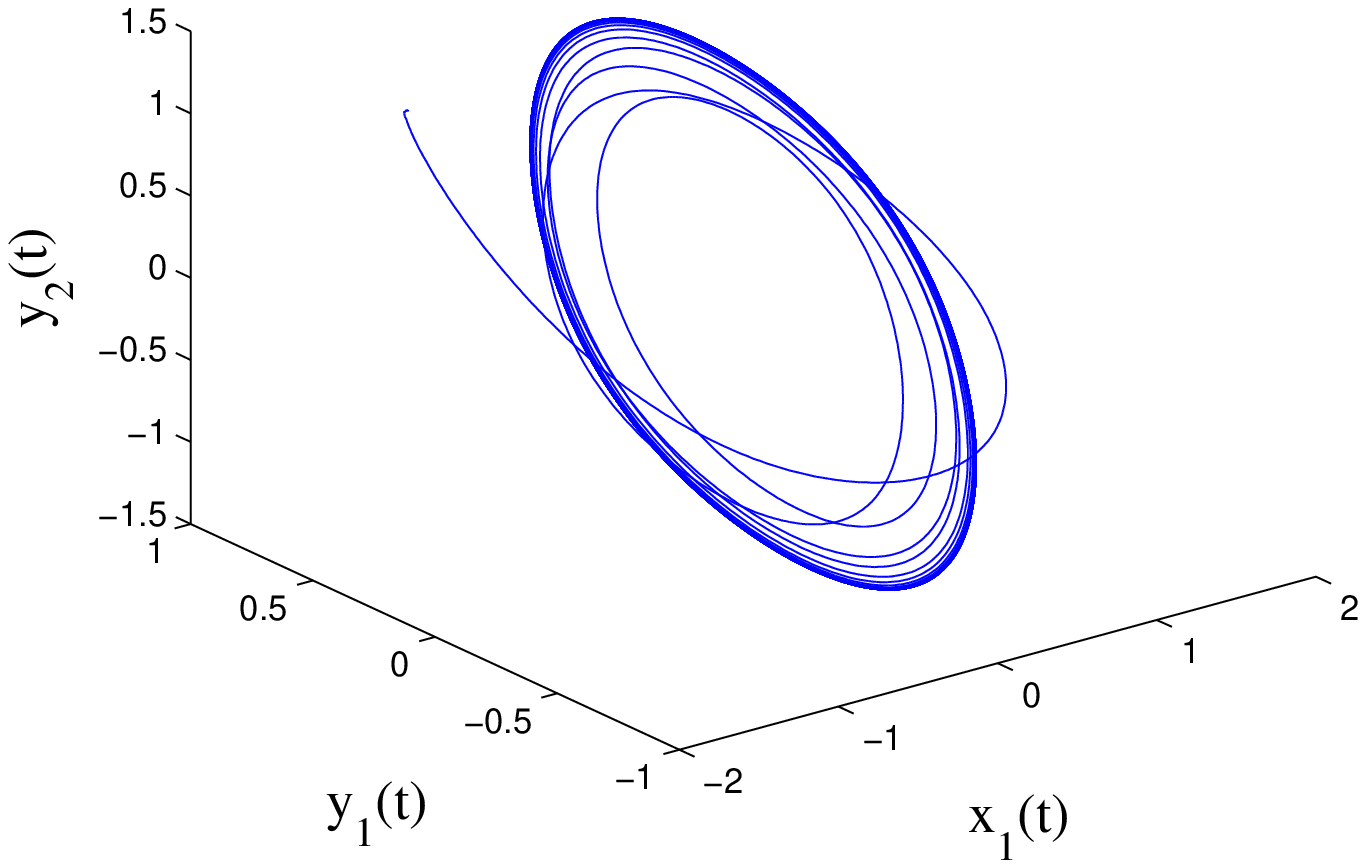}
\includegraphics[width=2in]{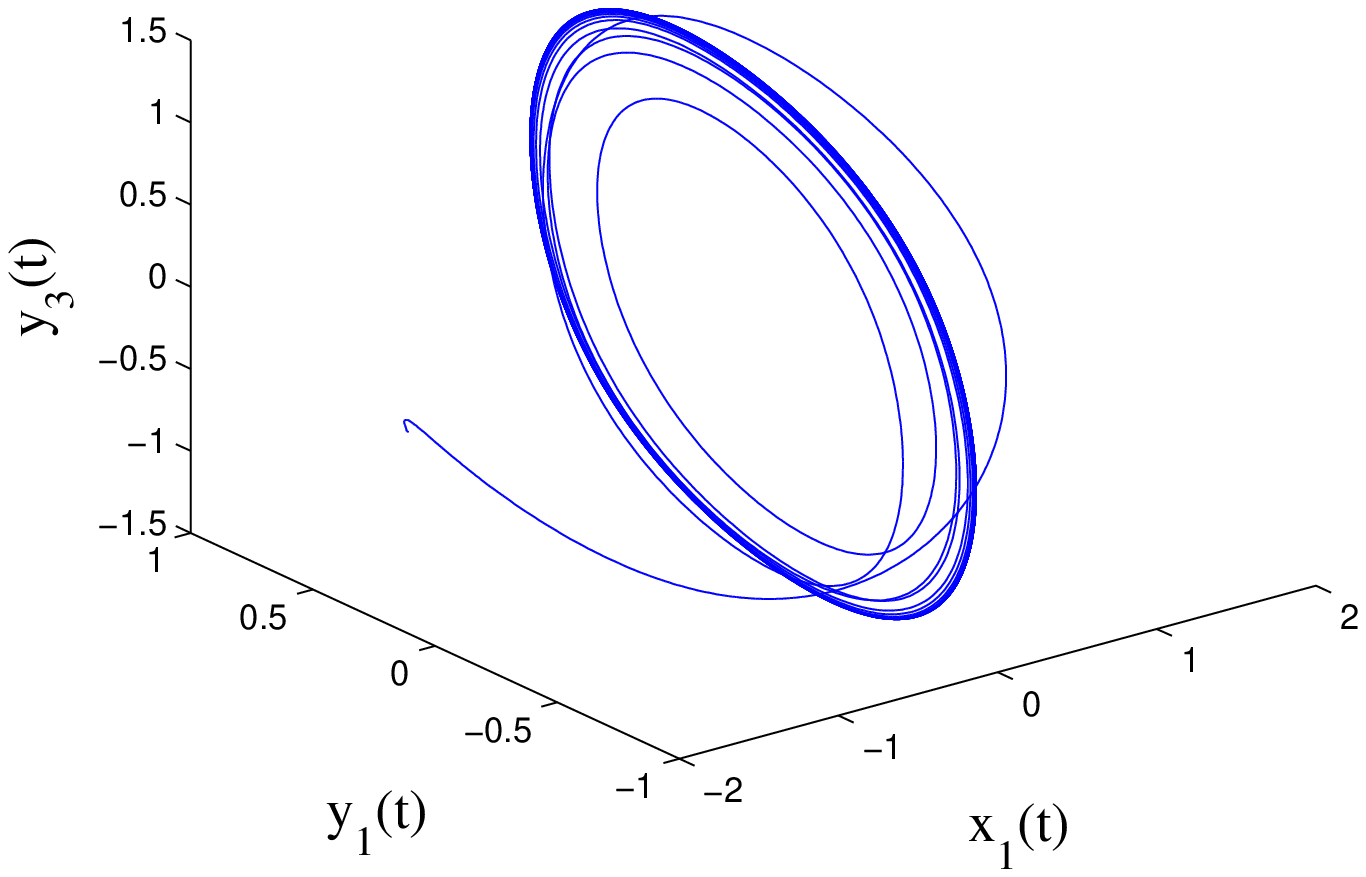}
\includegraphics[width=2in]{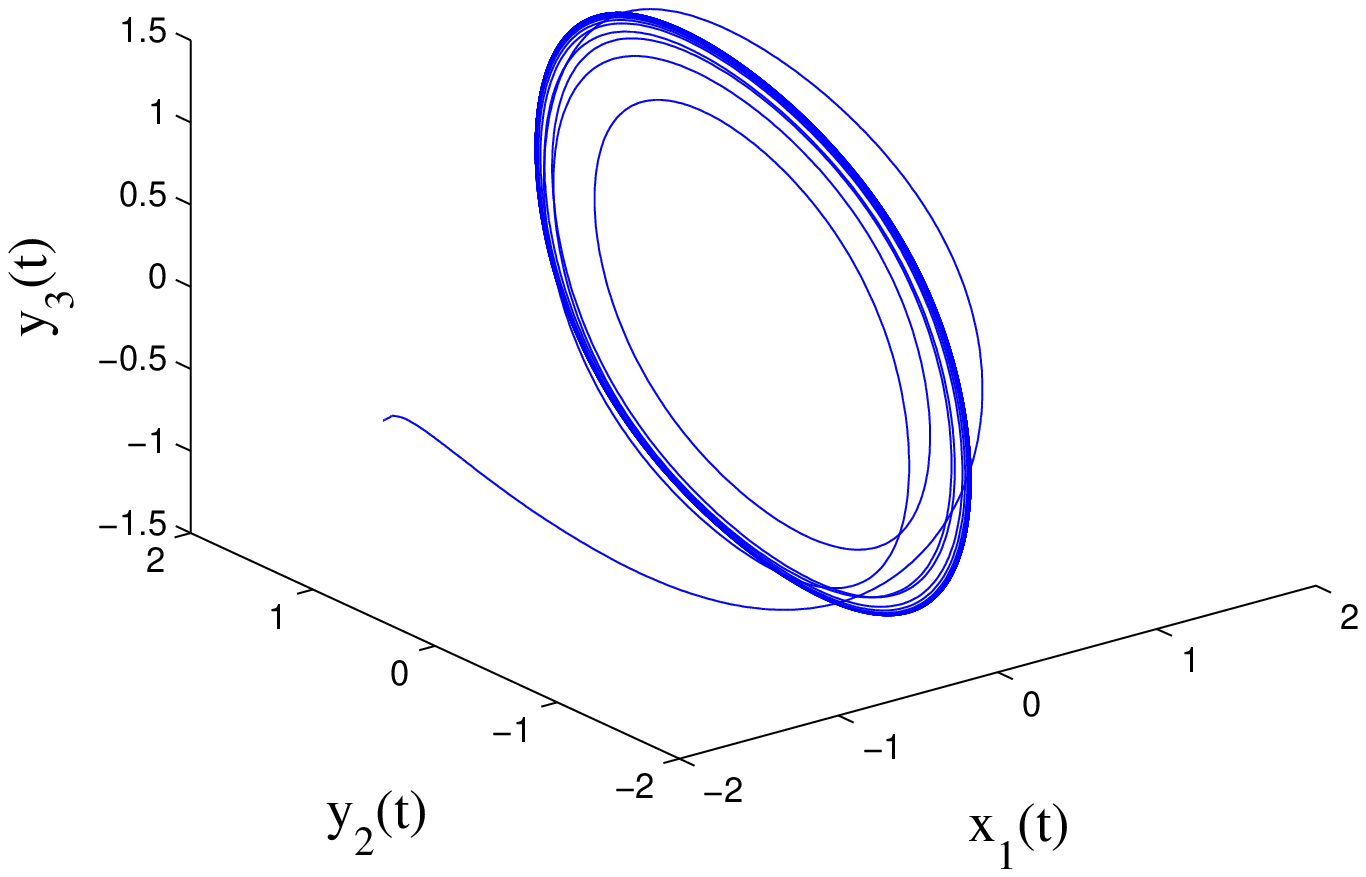}
\includegraphics[width=2in]{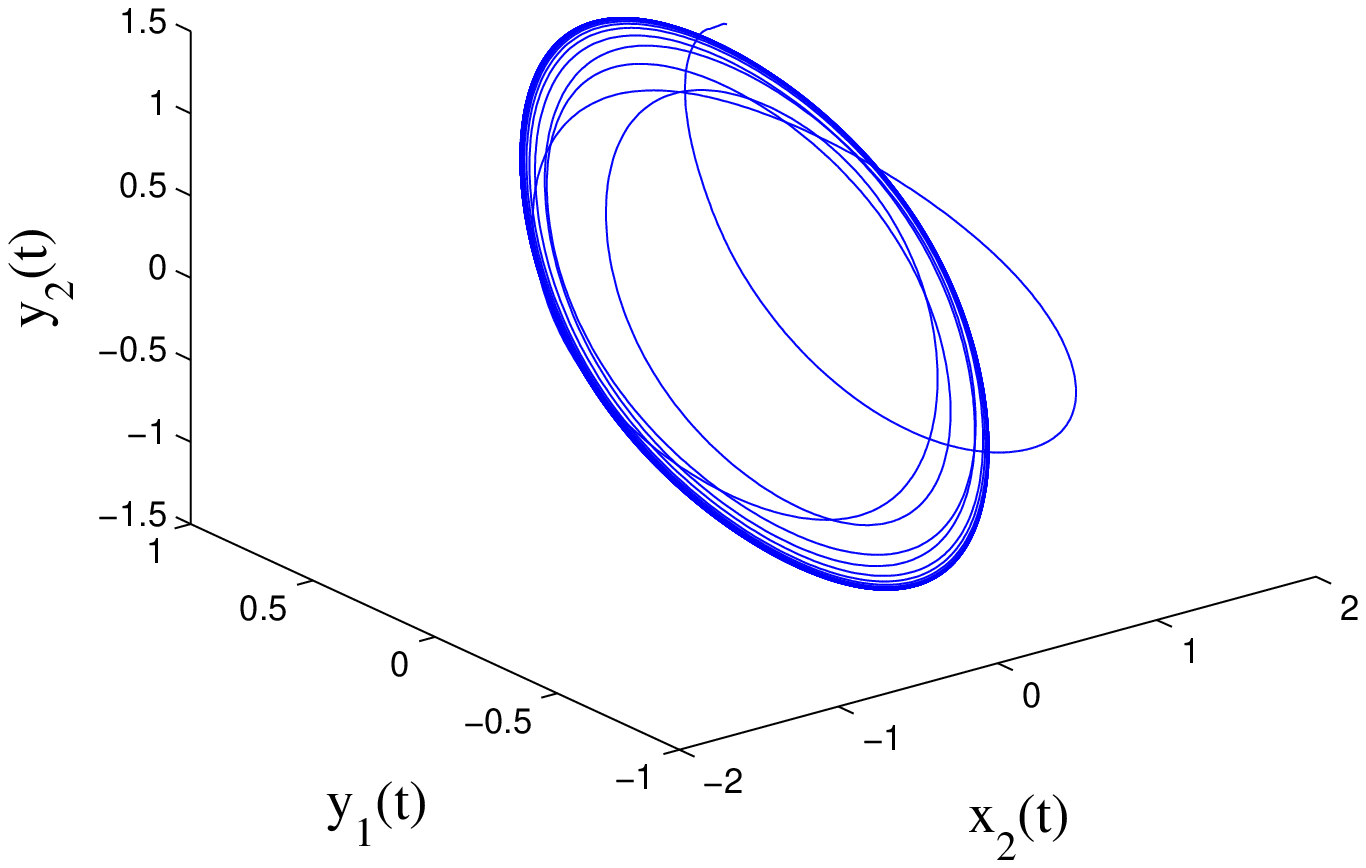}
\includegraphics[width=2in]{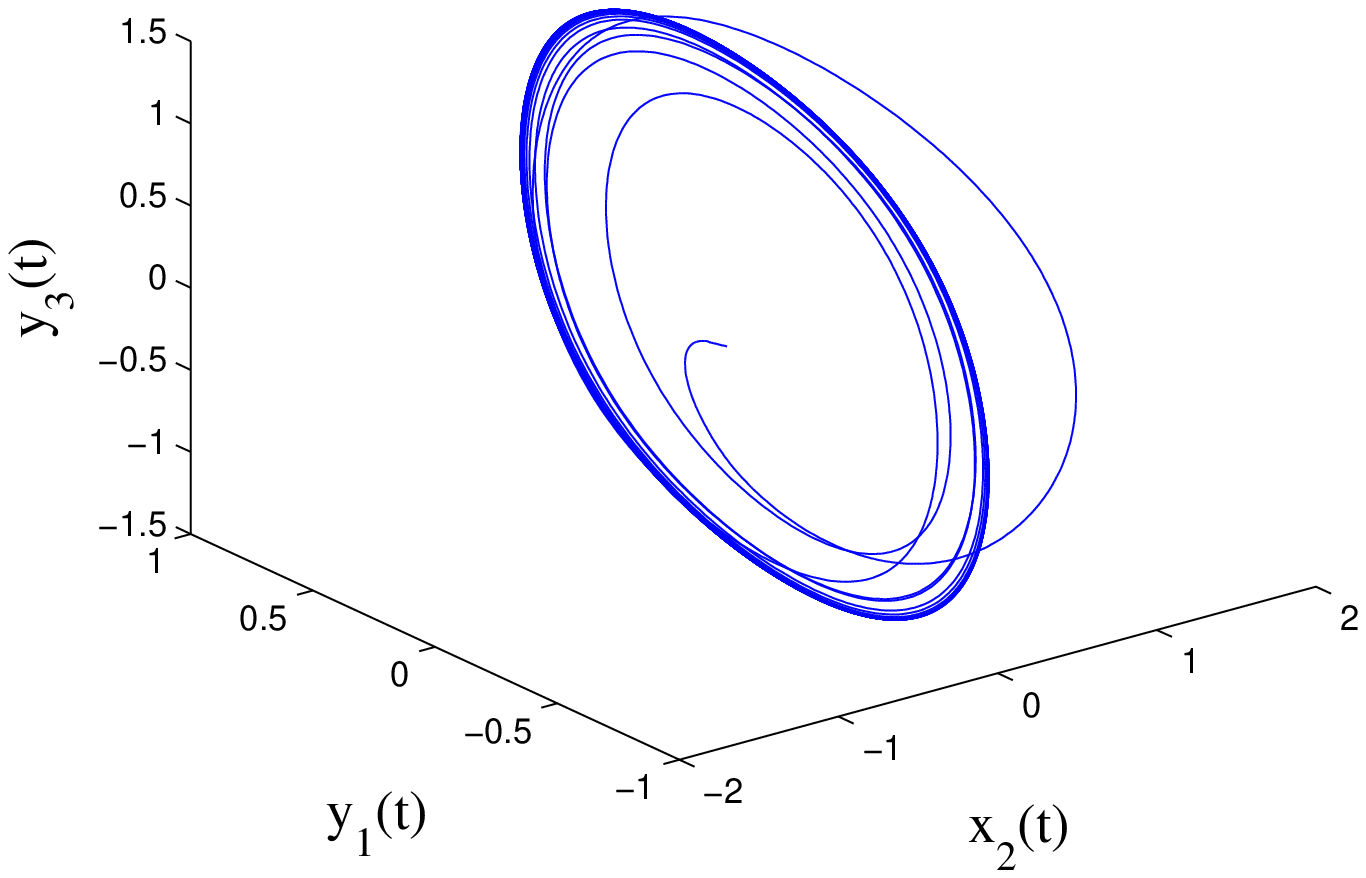}
\includegraphics[width=2in]{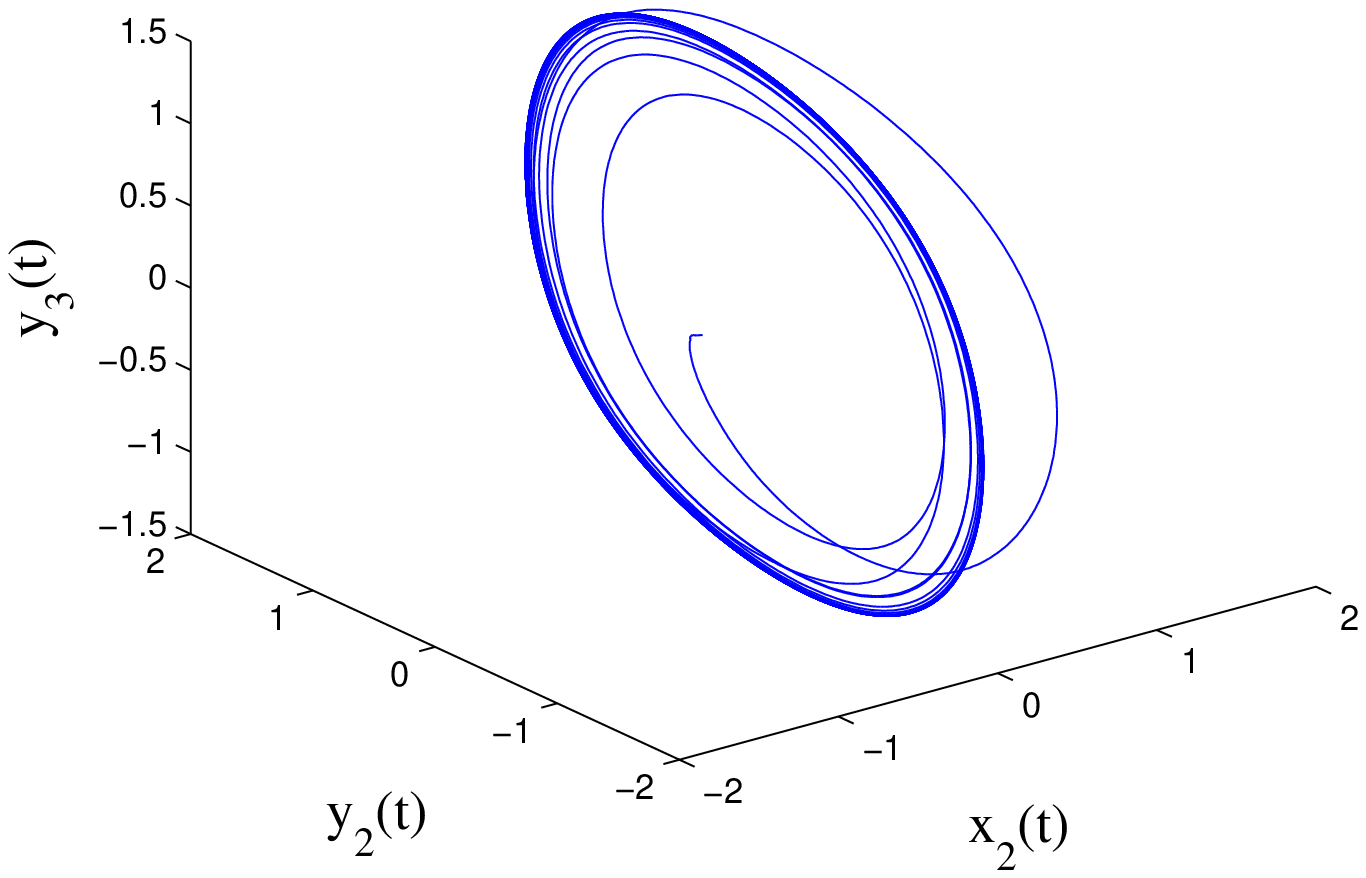}
\caption{\small{Phase diagrams of system \eqref{401} with $\theta=0.91$ and $\tau_4=0.36>\tau_0^*=0.2613$ projected on $x_1-y_1-y_2$, $x_1-y_1-y_3$, $x_1-y_2-y_3$, $x_2-y_1-y_2$, $x_2-y_1-y_3$ and $x_2-y_2-y_3$, respectively.}}
\label{fig8}
\end{figure}

\begin{figure}[H]
\centering
\includegraphics[width=2.5in]{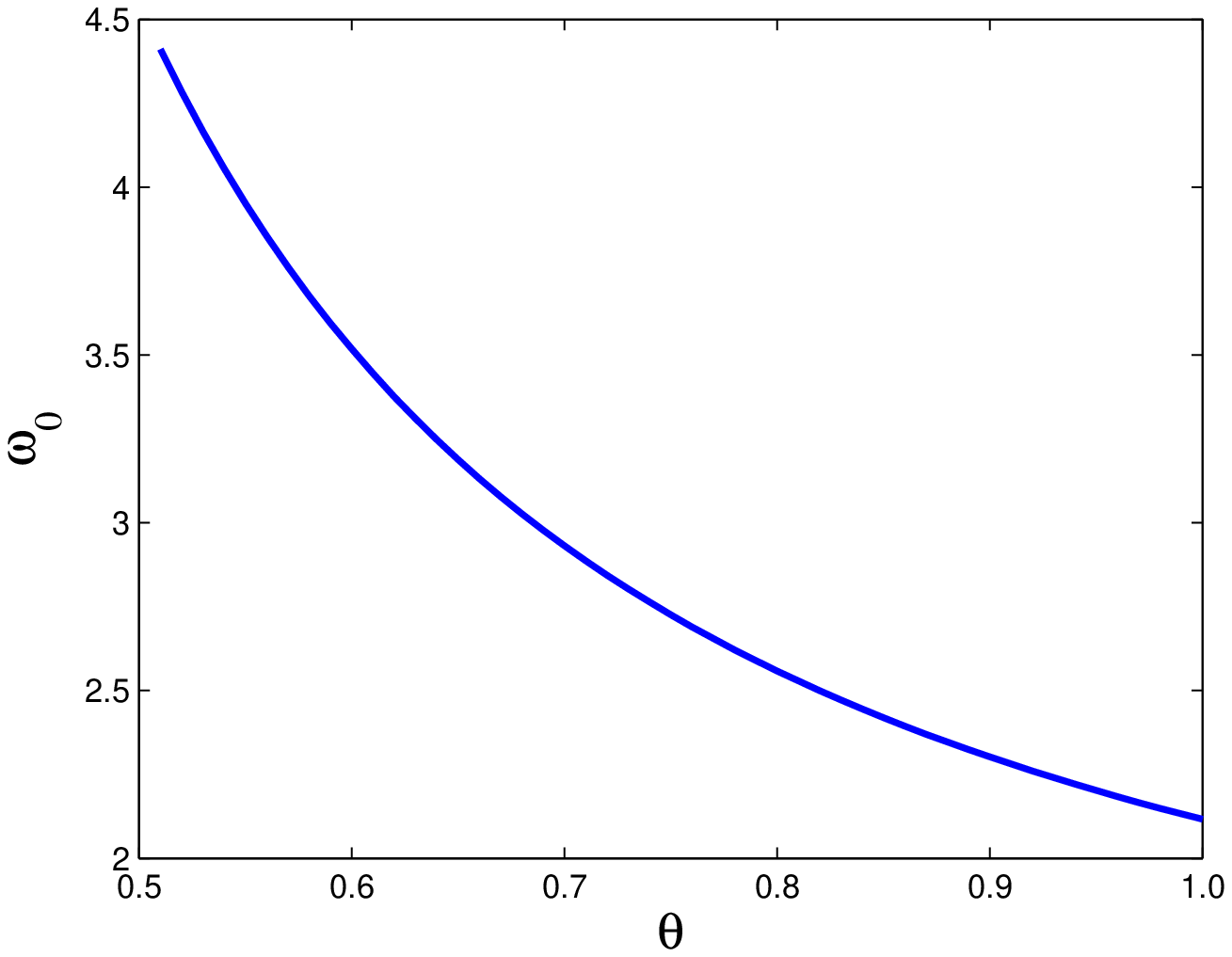}
\includegraphics[width=2.5in]{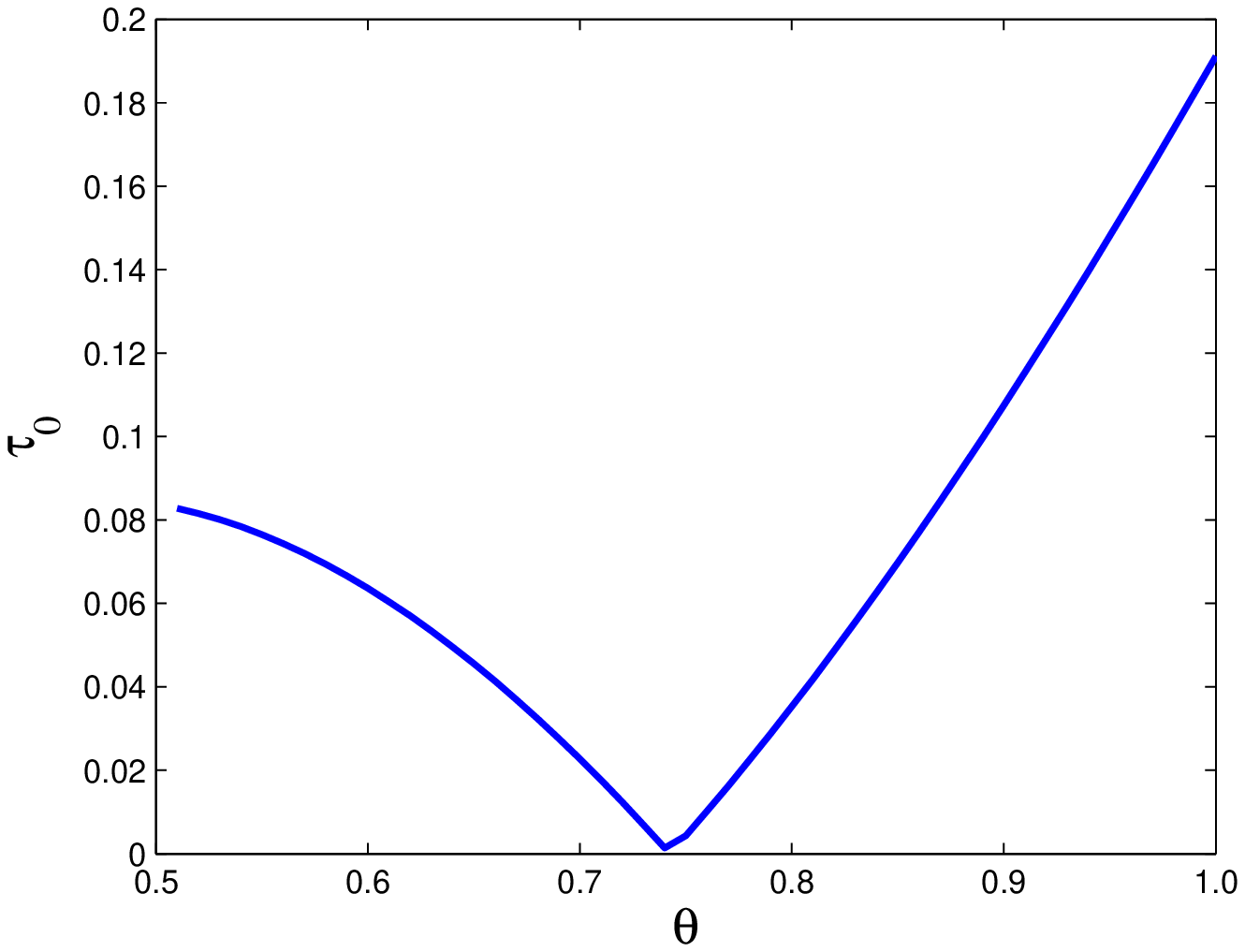}
\caption{\small{Plots of $\omega_0$ versus the order $\theta$ and $\tau_0$ versus the order $\theta$, respectively.}}
\label{fig9}
\end{figure}

\noindent declines and the scale of decrease becomes smaller accordingly. As for the bifurcation point $\tau_0$, it decreases and reach the minimal value $\tau_{min}=0.0014$ at $\theta=0.73$. Later, it increases to the maximum value $\tau_{max}=0.1911$. It is worth mentioning that when $\theta$ is near $0.73$, a little leakage delay or communication delay may lead to the occurrence of Hopf bifurcation. According to the association between the fractional order and the bifurcation point, one can delay the onset of an inherent bifurcation even eliminate the existence of Hopf bifurcation by changing the fractional order, which is insightful for the design and applications of neural networks.

\section{Sensitivity analysis of leakage delay}

Latin hypercube sampling (LHS) belongs to Monte Carlo class of sampling methods, and was introduced by Mckay et al. \cite{MBC}. LHS allows an un-biased estimate of the average model output, with the advantage that it requires fewer samples than simple random sampling to achieve the same accuracy \cite{MBC}. LHS is a so-called stratified sampling without replacement technique, where the random parameter distributions are divided into $N$ equal probability intervals, which are then sampled. $N$ represents the sample size. The choice for $N$ should be at least $k+1$, where $k$ is the number of varied parameters, but usually much larger to ensure accuracy \cite{MHR}.

Scatter plot is a simple but useful tool to give a direct visual indication of sensitivity after sampling the model over its input distributions \cite{HRW}. We set the sample size $N=1000$ and suppose that leakage delay $\tau_1$ and communication delay $\tau_2$ are subject to uniform distribution on $[0.1,0.6]$. Other system parameters are the same to Fig.\ref{fig3} and Fig.\ref{fig4}. Using Adams-Bashforth-Moulton algorithm, we obtain the stable Hopf bifurcation amplitude for each sample. Finally, we obtain $1000$ Hopf bifurcation amplitudes of the six neurons and describe it in forms of scatter plots. Fig.\ref{fig10} and Fig.\ref{fig11} show the scatter plots of the amplitudes of $x_1(t)$, $x_2(t)$, $x_3(t)$, $y_1(t)$, $y_2(t)$ and $y_3(t)$ with respect to $\tau_1$ and $\tau_2$, respectively, which imply that $\tau_1$ and $\tau_2$ are both positively correlative variables with the amplitudes of six-neuron states. However, how much strength and relevance between the delays and the amplitudes is not clear.

Next, we will study PRCCs of the amplitudes of $x_1(t)$, $x_2(t)$, $x_3(t)$, $y_1(t)$, $y_2(t)$ and $y_3(t)$ to $\tau_1$ and $\tau_2$. Marino et al. \cite{MHR} mentioned that PRCCs provide a measure of the strength of a linear association between the inputs and the outputs. The positive or negative of PRCCs respectively denote the positive or negative correlation with $\tau_1$ and $\tau_2$, and the sizes of PRCCs (the height of the bar graph in Fig.\ref{fig12}) measure the strength of the correlation. As we can see in Fig.\ref{fig12}, $\tau_1$ and $\tau_2$ are positively correlative variables to the amplitudes. Specially, we observe that the PRCCs of $\tau_1$ are smaller than the PRCCs of $\tau_2$, which implies that the impact of leakage delay is limited compared with communication delay in system \eqref{401}. Furthermore, we observe that the PRCCs of $\tau_1$ and $\tau_2$ are both more than $0.9$, which means that the dynamical behavior of system \eqref{401} is mainly determined by the leakage delay and the communication delay.

\begin{figure}[H]
\centering
\includegraphics[width=2in]{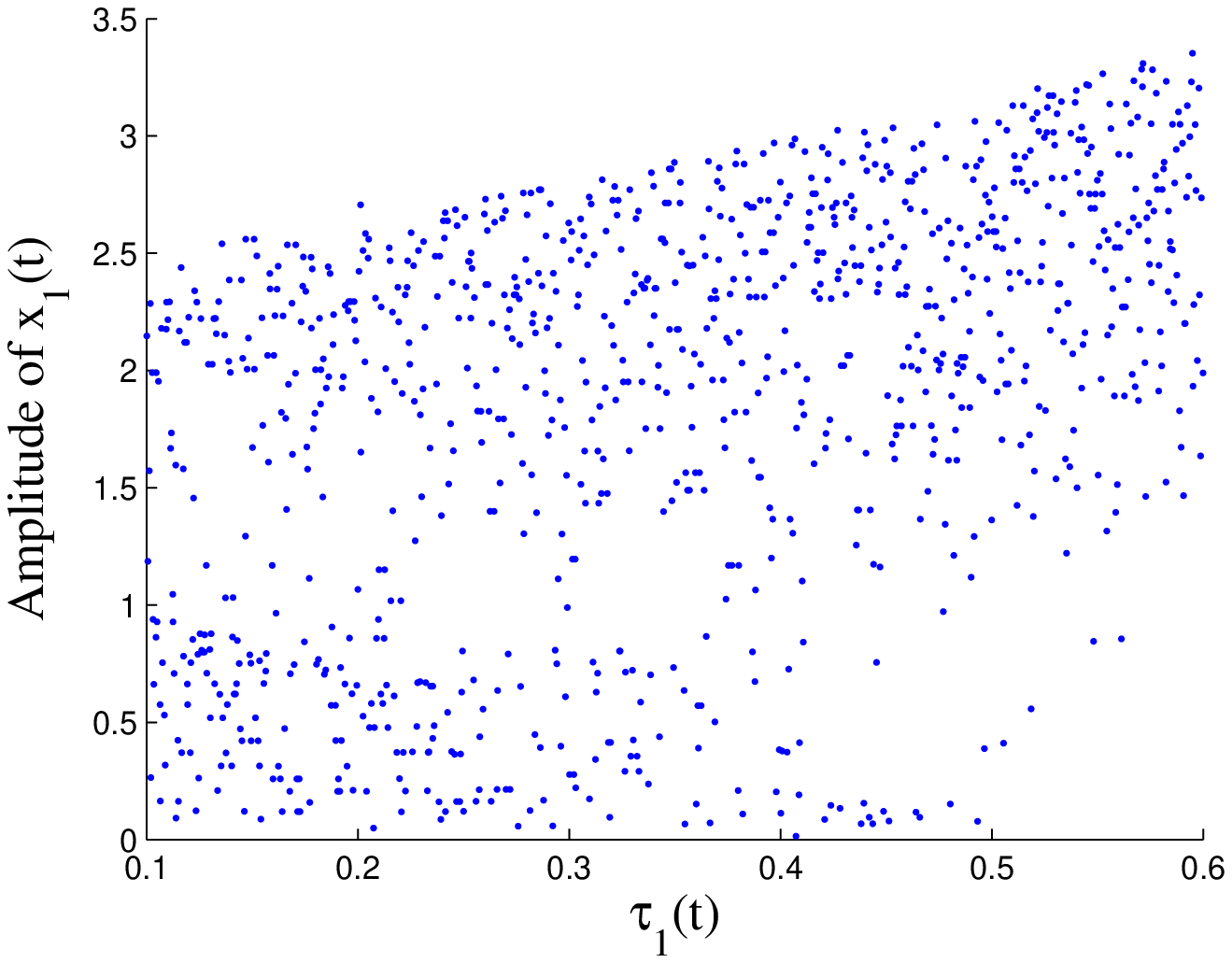}
\includegraphics[width=2in]{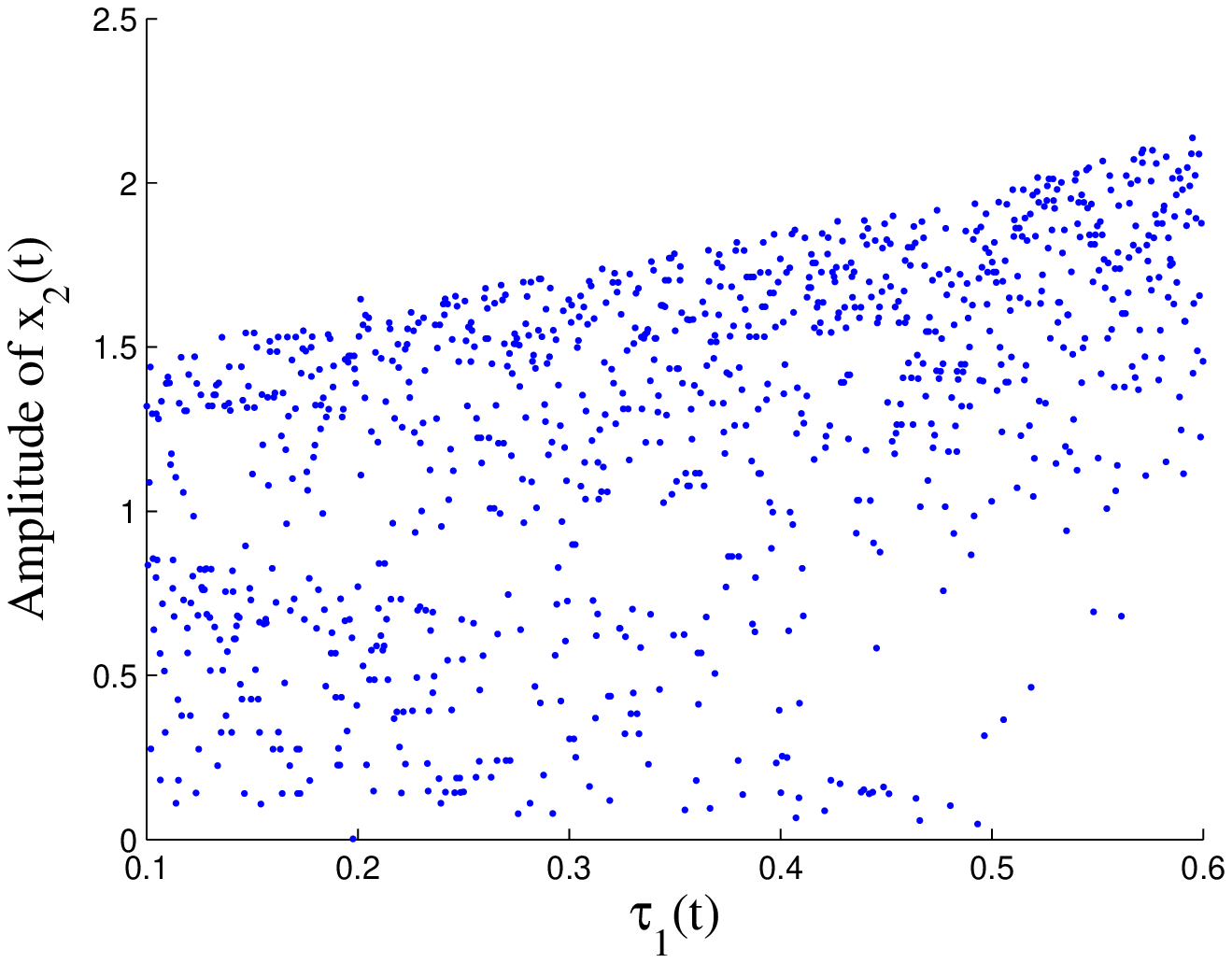}
\includegraphics[width=2in]{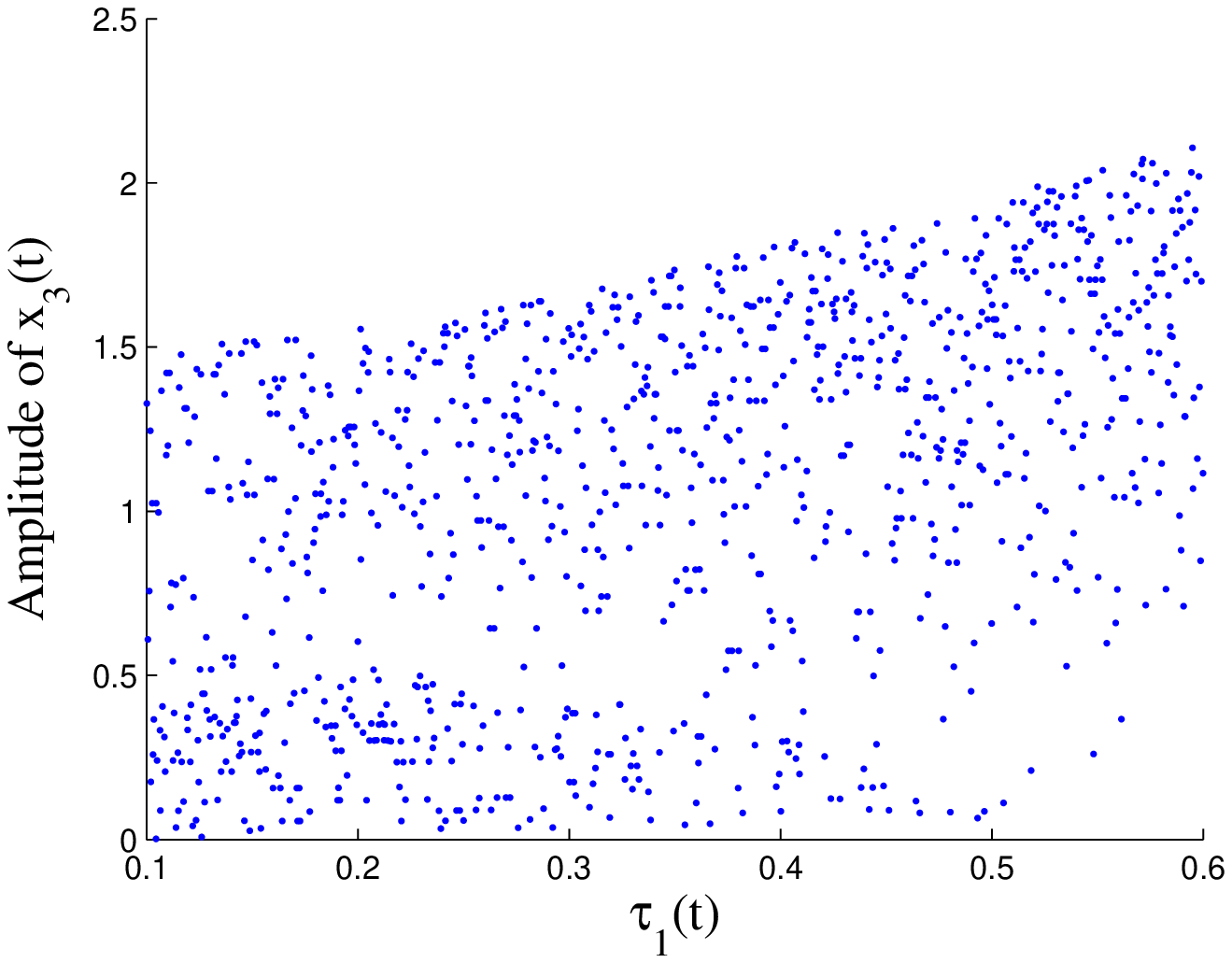}
\includegraphics[width=2in]{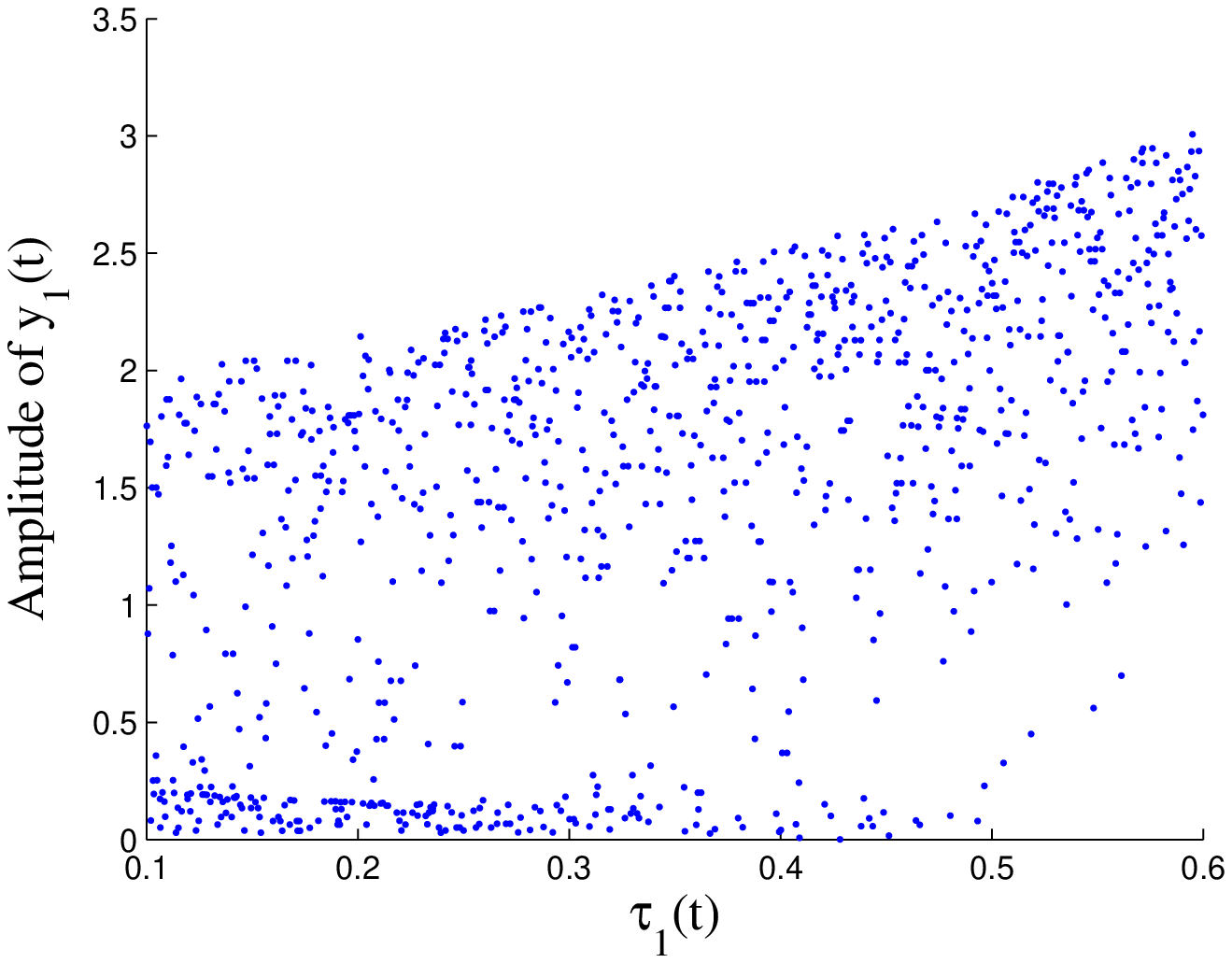}
\includegraphics[width=2in]{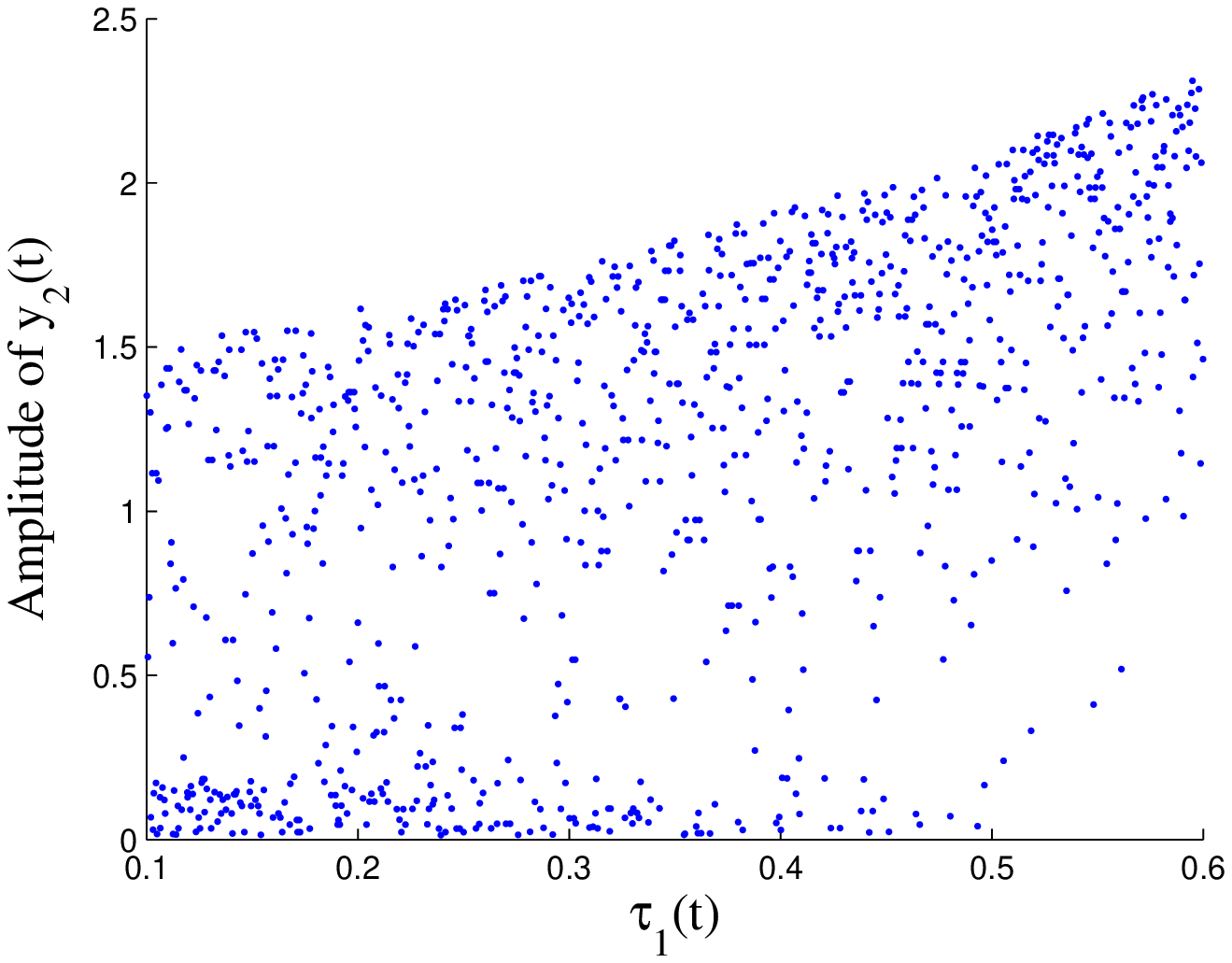}
\includegraphics[width=2in]{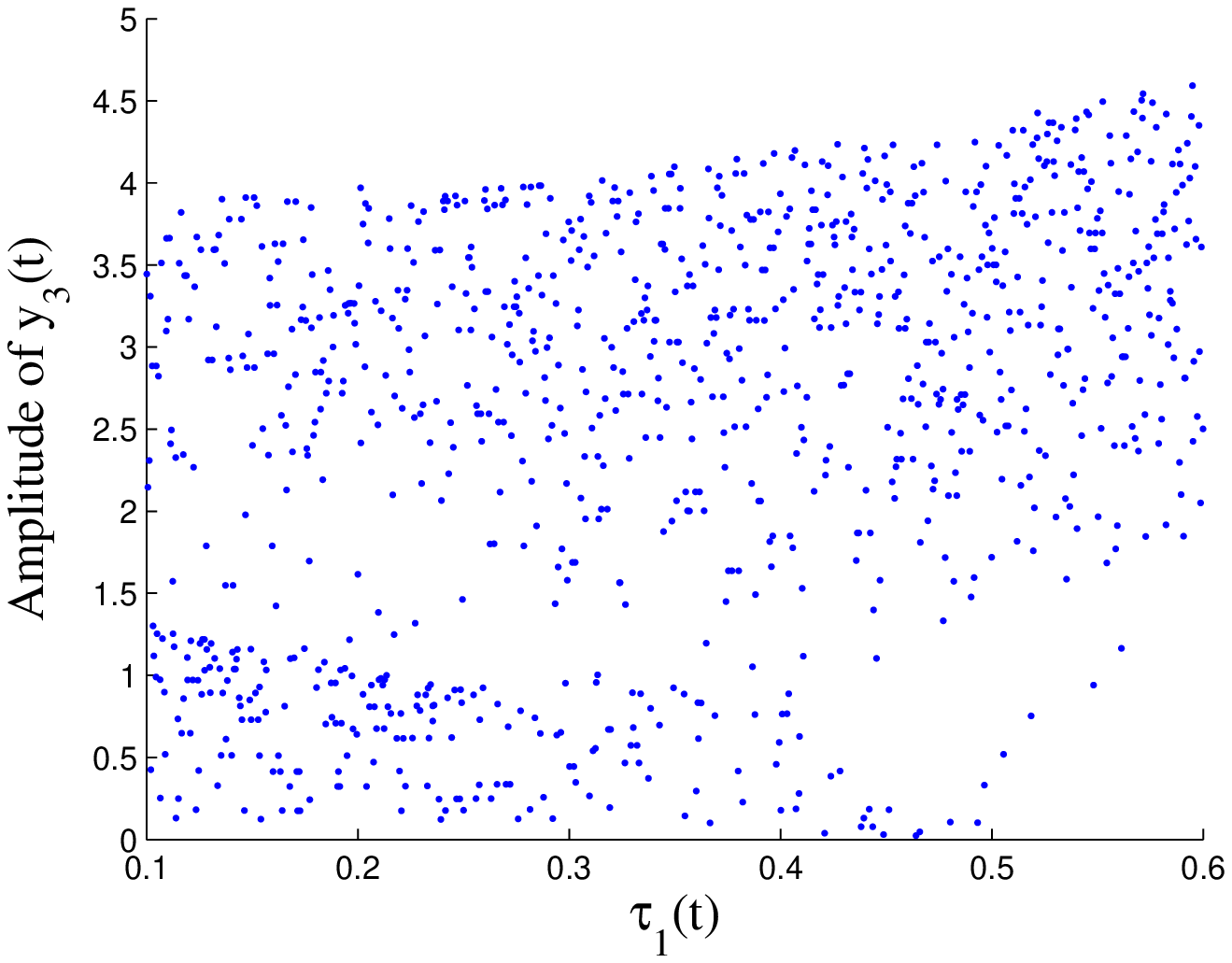}
\caption{\small{Scatter plots of amplitudes of $x_1(t)$, $x_2(t)$, $x_3(t)$, $y_1(t)$, $y_2(t)$ and $y_3(t)$ with respect to $\tau_1$.}}
\label{fig10}
\end{figure}
\begin{figure}[H]
\centering
\includegraphics[width=2in]{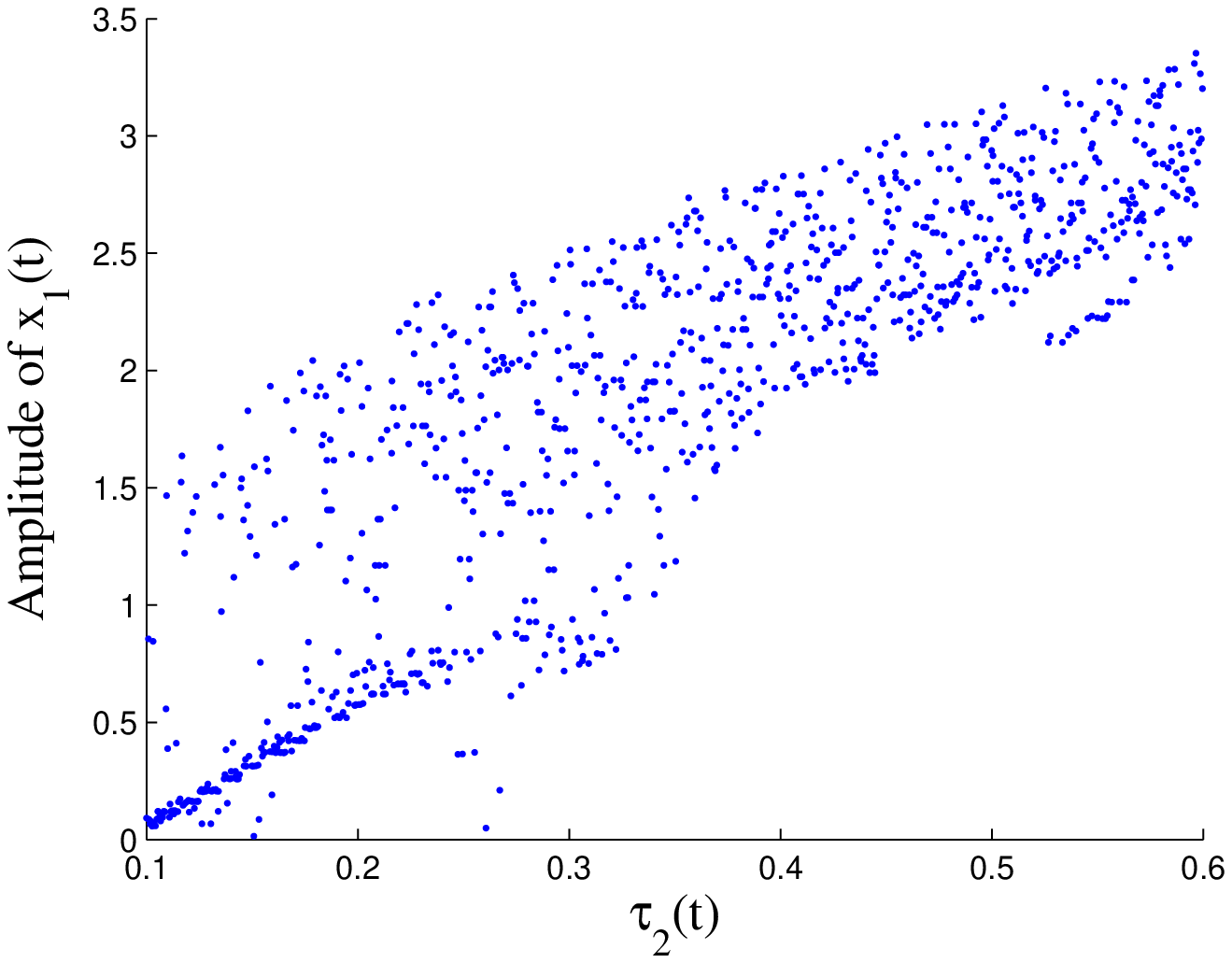}
\includegraphics[width=2in]{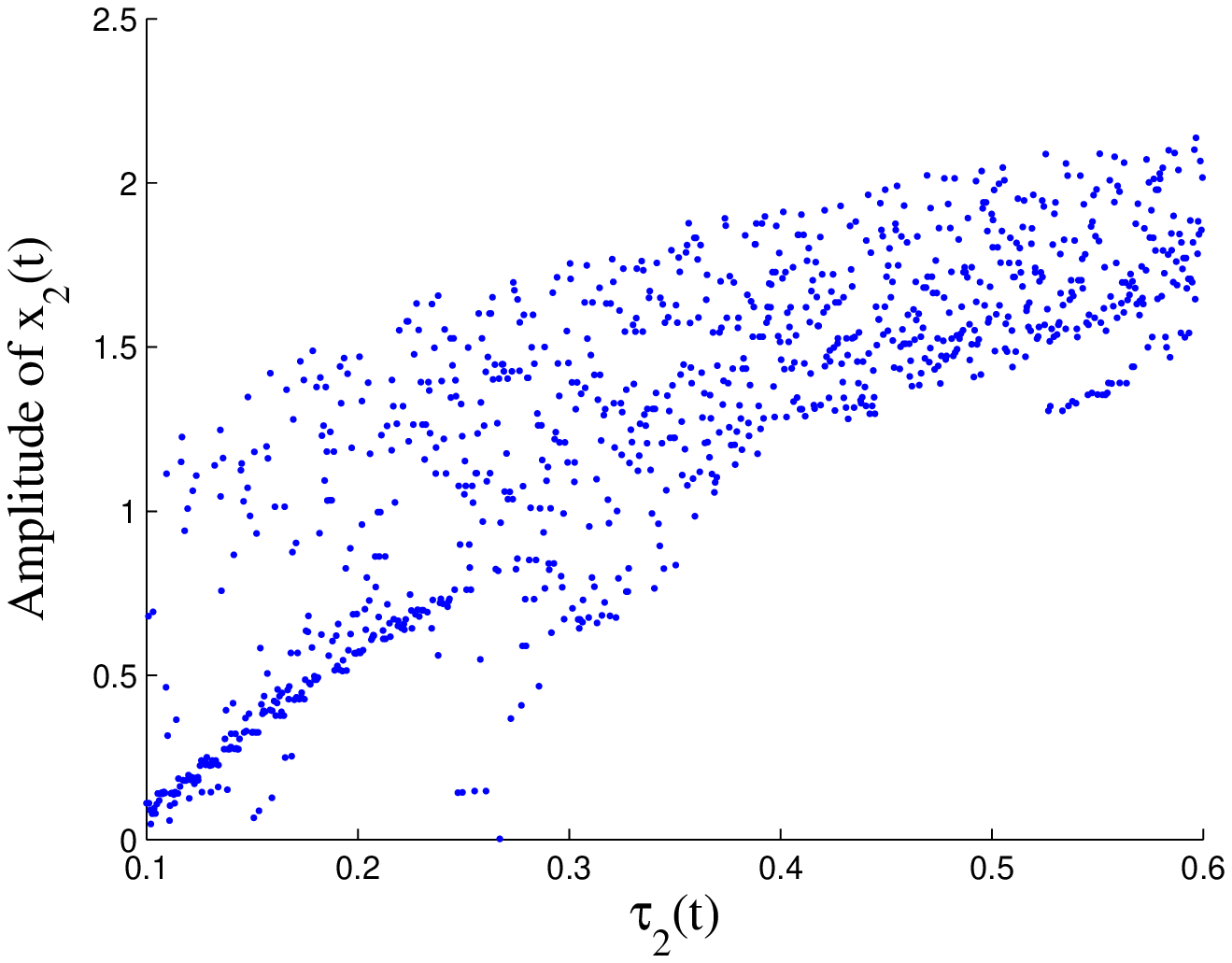}
\includegraphics[width=2in]{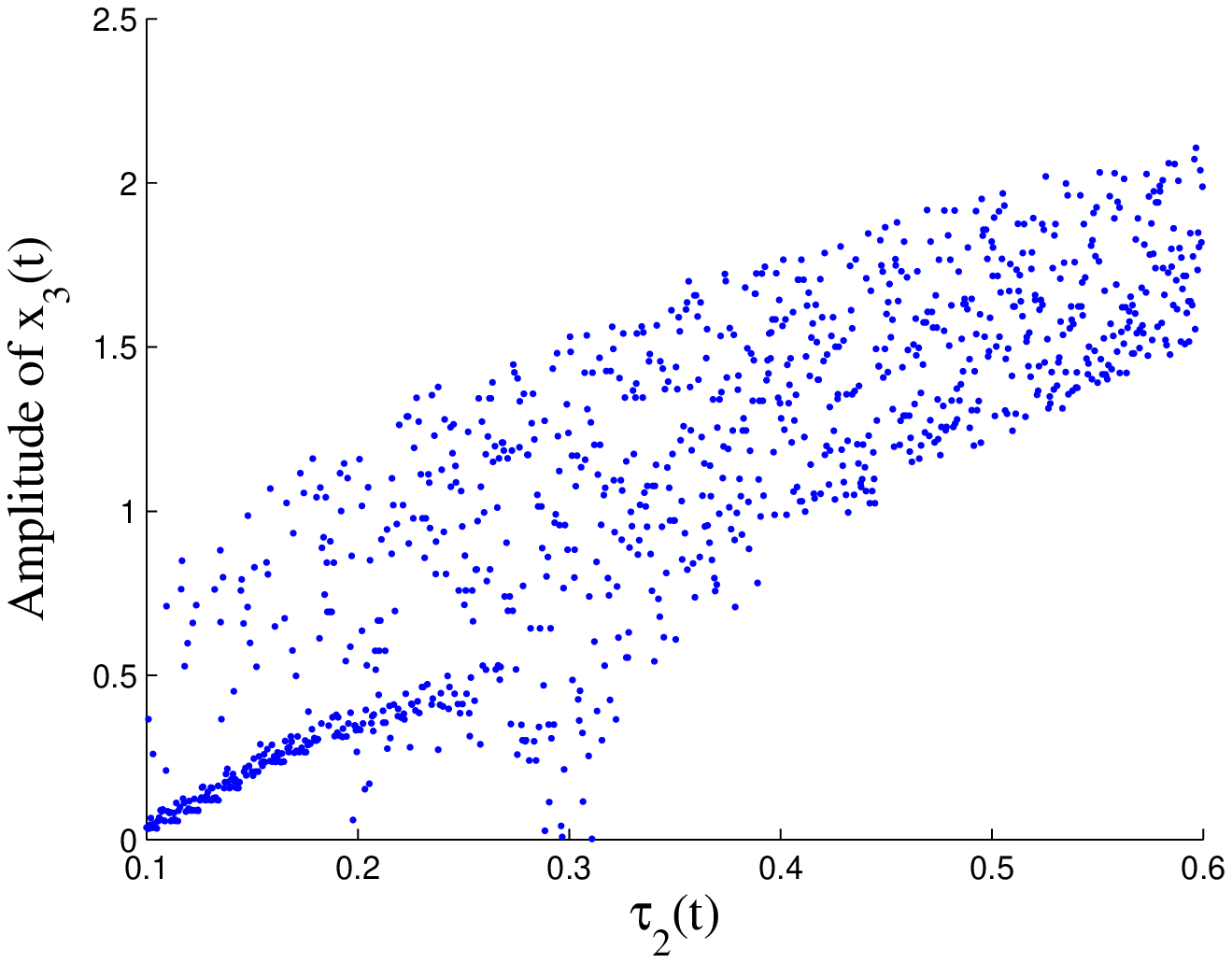}
\includegraphics[width=2in]{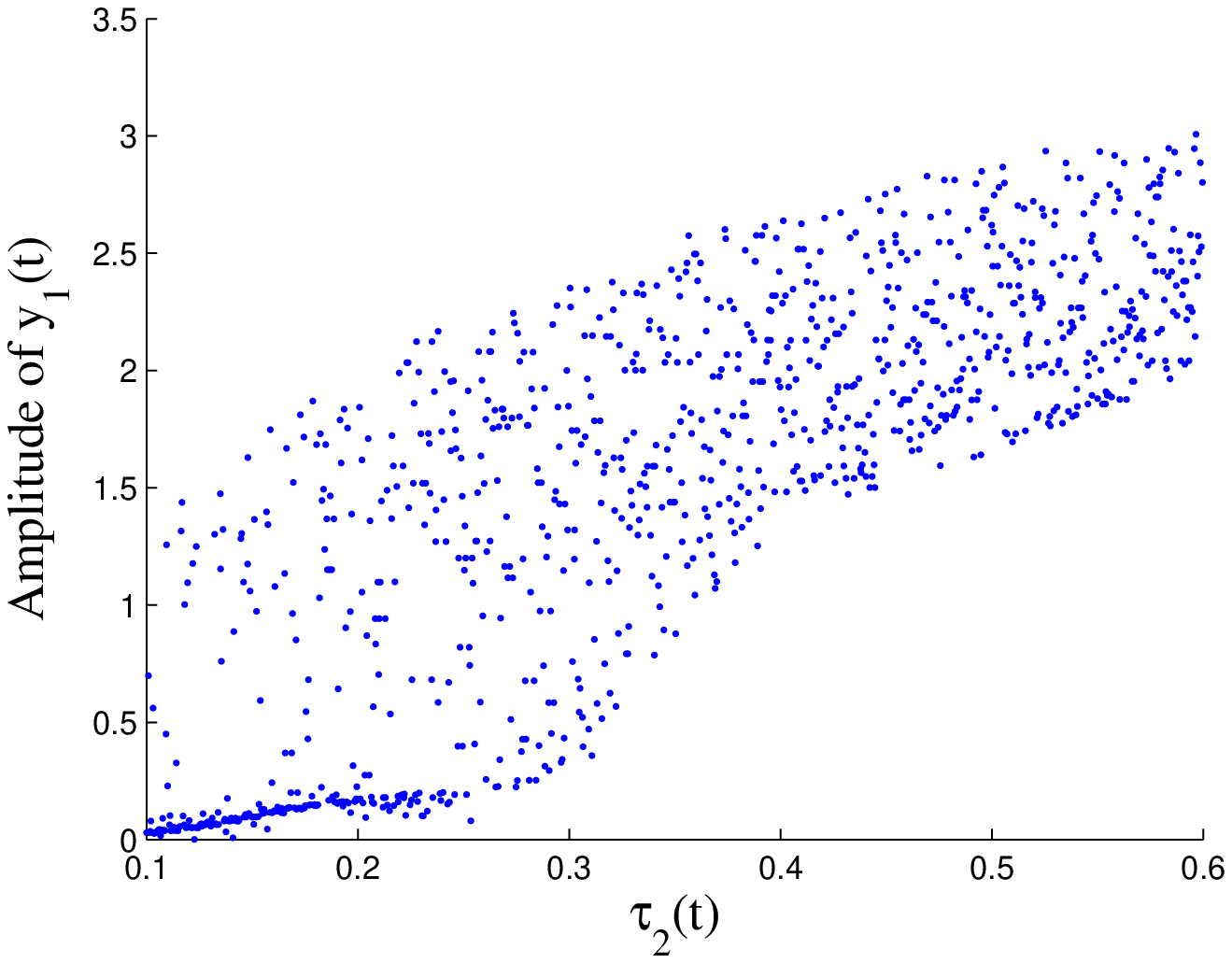}
\includegraphics[width=2in]{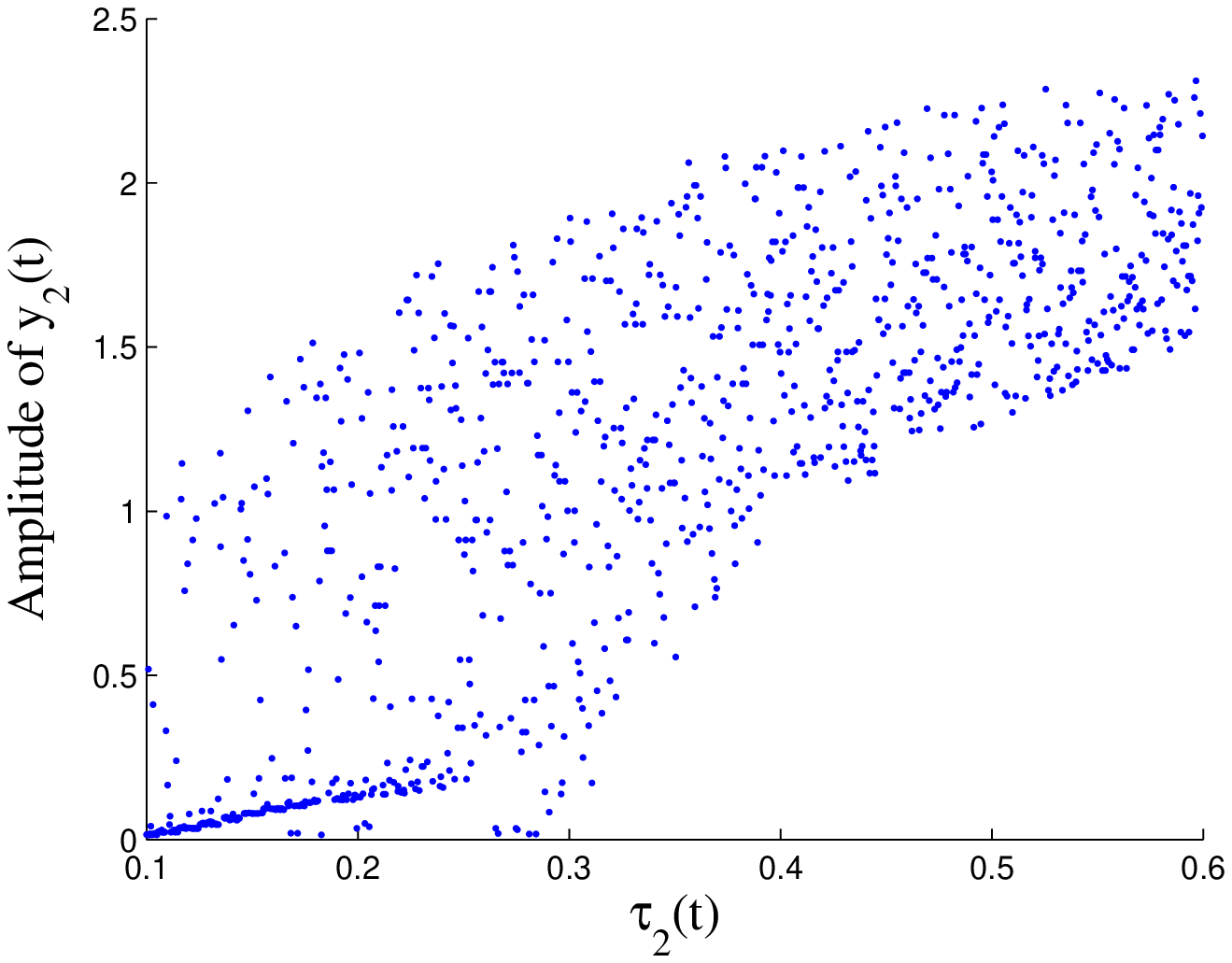}
\includegraphics[width=2in]{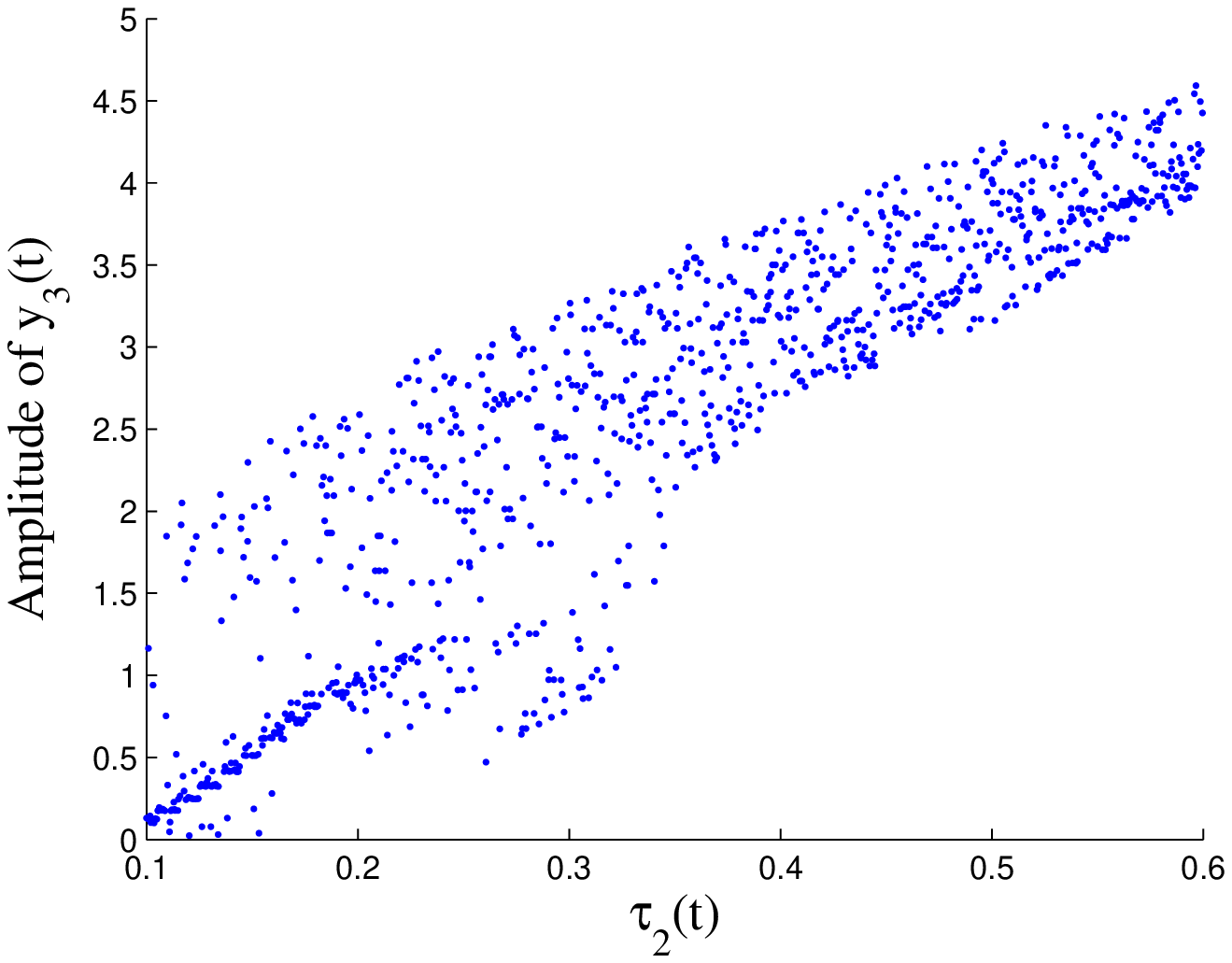}
\caption{\small{Scatter plots of amplitudes of $x_1(t)$, $x_2(t)$, $x_3(t)$, $y_1(t)$, $y_2(t)$ and $y_3(t)$ with respect to $\tau_2$.}}
\label{fig11}
\end{figure}

\begin{Remark}\label{501}
The PRCCs in Fig.\ref{fig12} implies that leakage delay indeed affects the dynamical behavior of the proposed network, but the influence of leakage delay may be not strong enough in contrast with communication delay, which only holds in some specific situations (see, e.g., system \eqref{401}). It is worth mentioning that PRCCs method provides a specific tool to assess different delays which can affect the dynamical behavior of fractional neural networks.
\end{Remark}

\begin{figure}[H]
\centering
\includegraphics[width=3in]{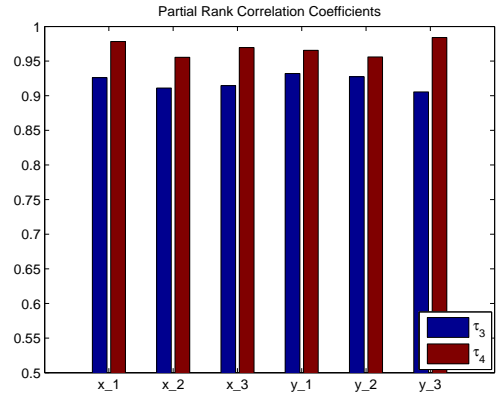}
\caption{\small{PRCCs of $x_1(t)$, $x_2(t)$, $x_3(t)$, $y_1(t)$, $y_2(t)$ and $y_3(t)$ with respect to $\tau_1$ and $\tau_2$.}}
\label{fig12}
\end{figure}

\section{Discussion and conclusion}

In this paper, we proposed a fractional six-neuron BAM neural network and established the existence of Hopf bifurcation. It was verified that the delayed fractional neural network generates a Hopf bifurcation when the bifurcation parameter $\tau_3$ or $\tau_4$ passes through some critical values. By numerical simulations, we demonstrated the effectiveness of our theoretical results and found that leakage delay weakens the stability of the fractional BAM neural network, which is similar to the works in \cite{HMC,HC}. We also investigated the influence of the order on the critical frequencies and bifurcation points numerically. Through sensitivity analysis, we clearly observed that how much impact of leakage delay and communication delay on the dynamical behavior of system \eqref{401}.

In \cite{HMC}, Huang et al. investigated a four-neuron fractional BAM neural network and revealed the effects of the leakage delay on the bifurcation points. For the sake of convenience, Huang et al. assumed that leakage delay equals to communication delay, which is similar to our theoretical analysis in Section 3.1, but didn't consider the general situation, namely, leakage delay differs from communication delay. Besides, in \cite{HMC}, the influence of leakage delay was illustrated by comparing two fractional BAM neural network with and without leakage delay, which cannot clearly indicate the impact. As for the really large neural networks, the association between the fractional order and the bifurcation points is more complicated and our simulations may reflect the really fact more closely in compare with \cite{HMC}.

Besides, neural networks are realized by electronic circuit, and the density of electromagnetic field is generally not uniform. Therefore, in factual modeling, only considering the change of time seems to be not comprehensive when electrons are moving in asymmetric and nonuniform electromagnetic fields. Thus, in our further work, we will introduce the diffusion effect into the fractional BAM neural networks.

\section*{Appendix A}

\begin{align*}
{c_{11}} =& {k_1} + {k_2} + {k_3} + {k_4} + {k_5} + {k_6}, \\
{c_{21}} =& {k_1}{k_2} + {k_1}{k_3} + {k_1}{k_4} + {k_1}{k_5} + {k_1}{k_6} + {k_2}{k_3} + {k_2}{k_4} + {k_2}{k_5} + {k_2}{k_6} + {k_3}{k_4} + {k_3}{k_5} + {k_3}{k_6} \\
&+ {k_4}{k_5} + {k_4}{k_6} + {k_5}{k_6}, \\
{c_{22}} =& {\phi _{11}}{\varphi _{11}} + {\phi _{12}}{\varphi _{21}} + {\phi _{13}}{\varphi _{31}} + {\phi _{21}}{\varphi _{12}} + {\phi _{22}}{\varphi _{22}} + {\phi _{23}}{\varphi _{32}} + {\phi _{31}}{\varphi _{13}} + {\phi _{32}}{\varphi _{23}} + {\phi _{33}}{\varphi _{33}}, \\
{c_{31}} =& {k_1}{k_2}{k_3} + {k_1}{k_2}{k_4} + {k_1}{k_2}{k_5} + {k_1}{k_2}{k_6} + {k_1}{k_3}{k_4} + {k_1}{k_3}{k_5} + {k_1}{k_3}{k_6} + {k_1}{k_4}{k_5} + {k_1}{k_4}{k_6} \\
&+ {k_1}{k_5}{k_6} + {k_2}{k_3}{k_4} + {k_2}{k_3}{k_5} + {k_2}{k_3}{k_6} + {k_2}{k_4}{k_5} + {k_2}{k_4}{k_6} + {k_2}{k_5}{k_6} + {k_3}{k_4}{k_5} + {k_3}{k_4}{k_6} \\
&+ {k_3}{k_5}{k_6} + {k_4}{k_5}{k_6}, \\
{c_{32}} =& {k_1}{\phi _{21}}{\varphi _{12}} + {k_1}{\phi _{22}}{\varphi _{22}} + {k_1}{\phi _{23}}{\varphi _{32}} + {k_1}{\phi _{31}}{\varphi _{13}} + {k_1}{\phi _{32}}{\varphi _{23}} + {k_1}{\phi _{33}}{\varphi _{33}} + {k_2}{\phi _{11}}{\varphi _{11}} \\
&+ {k_2}{\phi _{12}}{\varphi _{21}} + {k_2}{\phi _{13}}{\varphi _{31}} + {k_2}{\phi _{31}}{\varphi _{13}} + {k_2}{\phi _{32}}{\varphi _{23}} + {k_2}{\phi _{33}}{\varphi _{33}} + {k_3}{\phi _{11}}{\varphi _{11}} + {k_3}{\phi _{12}}{\varphi _{21}} \\
&+ {k_3}{\phi _{13}}{\varphi _{31}} + {k_3}{\phi _{21}}{\varphi _{12}} + {k_3}{\phi _{22}}{\varphi _{22}} + {k_3}{\phi _{23}}{\varphi _{32}} + {k_4}{\phi _{12}}{\varphi _{21}} + {k_4}{\phi _{13}}{\varphi _{31}} + {k_4}{\phi _{22}}{\varphi _{22}} \\
&+ {k_4}{\phi _{23}}{\varphi _{32}} + {k_4}{\phi _{32}}{\varphi _{23}} + {k_4}{\phi _{33}}{\varphi _{33}} + {k_5}{\phi _{11}}{\varphi _{11}} + {k_5}{\phi _{13}}{\varphi _{31}} + {k_5}{\phi _{21}}{\varphi _{12}} + {k_5}{\phi _{23}}{\varphi _{32}} \\
&+ {k_5}{\phi _{31}}{\varphi _{13}} + {k_5}{\phi _{33}}{\varphi _{33}} + {k_6}{\phi _{11}}{\varphi _{11}} + {k_6}{\phi _{12}}{\varphi _{21}} + {k_6}{\phi _{21}}{\varphi _{12}} + {k_6}{\phi _{22}}{\varphi _{22}} + {k_6}{\phi _{31}}{\varphi _{13}} \\
&+ {k_6}{\phi _{32}}{\varphi _{23}}, \\
{c_{41}} =& {k_1}{k_2}{k_3}{k_4} + {k_1}{k_2}{k_3}{k_5} + {k_1}{k_2}{k_3}{k_6} + {k_1}{k_2}{k_4}{k_5} + {k_1}{k_2}{k_4}{k_6} + {k_1}{k_2}{k_5}{k_6} + {k_1}{k_3}{k_4}{k_5} \\
&+ {k_1}{k_3}{k_4}{k_6} + {k_1}{k_3}{k_5}{k_6} + {k_1}{k_4}{k_5}{k_6} + {k_2}{k_3}{k_4}{k_5} + {k_2}{k_3}{k_4}{k_6} + {k_2}{k_3}{k_5}{k_6} + {k_2}{k_4}{k_5}{k_6} \\
&+ {k_3}{k_4}{k_5}{k_6},
\end{align*}

\begin{align*}
{c_{42}} =& {k_1}{k_2}{\phi _{31}}{\varphi _{13}} + {k_1}{k_2}{\phi _{32}}{\varphi _{23}} + {k_1}{k_2}{\phi _{33}}{\varphi _{33}} + {k_1}{k_3}{\phi _{21}}{\varphi _{12}} + {k_1}{k_3}{\phi _{22}}{\varphi _{22}} + {k_1}{k_3}{\phi _{23}}{\varphi _{32}} \\
&+ {k_1}{k_4}{\phi _{22}}{\varphi _{22}} + {k_1}{k_4}{\phi _{23}}{\varphi _{32}} + {k_1}{k_4}{\phi _{32}}{\varphi _{23}} + {k_1}{k_4}{\phi _{33}}{\varphi _{33}} + {k_1}{k_5}{\phi _{21}}{\varphi _{12}} + {k_1}{k_5}{\phi _{23}}{\varphi _{32}} \\
&+ {k_1}{k_5}{\phi _{31}}{\varphi _{13}} + {k_1}{k_5}{\phi _{33}}{\varphi _{33}} + {k_1}{k_6}{\phi _{21}}{\varphi _{12}} + {k_1}{k_6}{\phi _{22}}{\varphi _{22}} + {k_1}{k_6}{\phi _{31}}{\varphi _{13}} + {k_1}{k_6}{\phi _{32}}{\varphi _{23}} \\
&+ {k_2}{k_3}{\phi _{11}}{\varphi _{11}} + {k_2}{k_3}{\phi _{12}}{\varphi _{21}} + {k_2}{k_3}{\phi _{13}}{\varphi _{31}} + {k_2}{k_4}{\phi _{12}}{\varphi _{21}} + {k_2}{k_4}{\phi _{13}}{\varphi _{31}} + {k_2}{k_4}{\phi _{32}}{\varphi _{23}} \\
&+ {k_2}{k_4}{\phi _{33}}{\varphi _{33}} + {k_2}{k_5}{\phi _{11}}{\varphi _{11}} + {k_2}{k_5}{\phi _{13}}{\varphi _{31}} + {k_2}{k_5}{\phi _{31}}{\varphi _{13}} + {k_2}{k_5}{\phi _{33}}{\varphi _{33}} + {k_2}{k_6}{\phi _{11}}{\varphi _{11}} \\
&+ {k_2}{k_6}{\phi _{12}}{\varphi _{21}} + {k_2}{k_6}{\phi _{31}}{\varphi _{13}} + {k_2}{k_6}{\phi _{32}}{\varphi _{23}} + {k_3}{k_4}{\phi _{12}}{\varphi _{21}} + {k_3}{k_4}{\phi _{13}}{\varphi _{31}} + {k_3}{k_4}{\phi _{22}}{\varphi _{22}} \\
&+ {k_3}{k_4}{\phi _{23}}{\varphi _{32}} + {k_3}{k_5}{\phi _{11}}{\varphi _{11}} + {k_3}{k_5}{\phi _{13}}{\varphi _{31}} + {k_3}{k_5}{\phi _{21}}{\varphi _{12}} + {k_3}{k_5}{\phi _{23}}{\varphi _{32}} + {k_3}{k_6}{\phi _{11}}{\varphi _{11}} \\
&+ {k_3}{k_6}{\phi _{12}}{\varphi _{21}} + {k_3}{k_6}{\phi _{21}}{\varphi _{12}} + {k_3}{k_6}{\phi _{22}}{\varphi _{22}} + {k_4}{k_5}{\phi _{13}}{\varphi _{31}} + {k_4}{k_5}{\phi _{23}}{\varphi _{32}} + {k_4}{k_5}{\phi _{33}}{\varphi _{33}} \\
&+ {k_4}{k_6}{\phi _{12}}{\varphi _{21}} + {k_4}{k_6}{\phi _{22}}{\varphi _{22}} + {k_4}{k_6}{\phi _{32}}{\varphi _{23}} + {k_5}{k_6}{\phi _{11}}{\varphi _{11}} + {k_5}{k_6}{\phi _{21}}{\varphi _{12}} + {k_5}{k_6}{\phi _{31}}{\varphi _{13}}, \\
{c_{43}} =& {\phi _{11}}{\phi _{22}}{\varphi _{11}}{\varphi _{22}} + {\phi _{11}}{\phi _{23}}{\varphi _{11}}{\varphi _{32}} + {\phi _{11}}{\phi _{32}}{\varphi _{11}}{\varphi _{23}} + {\phi _{11}}{\phi _{33}}{\varphi _{11}}{\varphi _{33}} + {\phi _{12}}{\phi _{21}}{\varphi _{12}}{\varphi _{21}} \\
&+ {\phi _{12}}{\phi _{23}}{\varphi _{21}}{\varphi _{32}} + {\phi _{12}}{\phi _{31}}{\varphi _{13}}{\varphi _{21}} + {\phi _{12}}{\phi _{33}}{\varphi _{21}}{\varphi _{33}} + {\phi _{13}}{\phi _{21}}{\varphi _{12}}{\varphi _{31}} + {\phi _{13}}{\phi _{22}}{\varphi _{22}}{\varphi _{31}} \\
&+ {\phi _{13}}{\phi _{31}}{\varphi _{13}}{\varphi _{31}} + {\phi _{13}}{\phi _{32}}{\varphi _{23}}{\varphi _{31}} + {\phi _{21}}{\phi _{32}}{\varphi _{12}}{\varphi _{23}} + {\phi _{21}}{\phi _{33}}{\varphi _{12}}{\varphi _{33}} + {\phi _{22}}{\phi _{31}}{\varphi _{13}}{\varphi _{22}} \\
&+ {\phi _{22}}{\phi _{33}}{\varphi _{22}}{\varphi _{33}} + {\phi _{23}}{\phi _{31}}{\varphi _{13}}{\varphi _{32}} + {\phi _{23}}{\phi _{32}}{\varphi _{23}}{\varphi _{32}}, \\
{c_{44}} =& {\phi _{11}}{\phi _{22}}{\varphi _{12}}{\varphi _{21}} + {\phi _{11}}{\phi _{23}}{\varphi _{12}}{\varphi _{31}} + {\phi _{11}}{\phi _{32}}{\varphi _{13}}{\varphi _{21}} + {\phi _{11}}{\phi _{33}}{\varphi _{13}}{\varphi _{31}} + {\phi _{12}}{\phi _{21}}{\varphi _{11}}{\varphi _{22}} \\
&+ {\phi _{12}}{\phi _{23}}{\varphi _{22}}{\varphi _{31}} + {\phi _{12}}{\phi _{31}}{\varphi _{11}}{\varphi _{23}} + {\phi _{12}}{\phi _{33}}{\varphi _{23}}{\varphi _{31}} + {\phi _{13}}{\phi _{21}}{\varphi _{11}}{\varphi _{32}} + {\phi _{13}}{\phi _{22}}{\varphi _{21}}{\varphi _{32}} \\
&+ {\phi _{13}}{\phi _{31}}{\varphi _{11}}{\varphi _{33}} + {\phi _{13}}{\phi _{32}}{\varphi _{21}}{\varphi _{33}} + {\phi _{21}}{\phi _{32}}{\varphi _{13}}{\varphi _{22}} + {\phi _{21}}{\phi _{33}}{\varphi _{13}}{\varphi _{32}} + {\phi _{22}}{\phi _{31}}{\varphi _{12}}{\varphi _{23}} \\
&+ {\phi _{22}}{\phi _{33}}{\varphi _{23}}{\varphi _{32}} + {\phi _{23}}{\phi _{31}}{\varphi _{12}}{\varphi _{33}} + {\phi _{23}}{\phi _{32}}{\varphi _{22}}{\varphi _{33}}, \\
{c_{51}} =& {k_1}{k_2}{k_3}{k_4}{k_5} + {k_1}{k_2}{k_3}{k_4}{k_6} + {k_1}{k_2}{k_3}{k_5}{k_6} + {k_1}{k_2}{k_4}{k_5}{k_6} + {k_1}{k_3}{k_4}{k_5}{k_6} + {k_2}{k_3}{k_4}{k_5}{k_6}, \\
{c_{52}} =& {k_1}{k_2}{k_4}{\phi _{32}}{\varphi _{23}} + {k_1}{k_2}{k_4}{\phi _{33}}{\varphi _{33}} + {k_1}{k_2}{k_5}{\phi _{31}}{\varphi _{13}} + {k_1}{k_2}{k_5}{\phi _{33}}{\varphi _{33}} + {k_1}{k_2}{k_6}{\phi _{31}}{\varphi _{13}} \\
&+ {k_1}{k_2}{k_6}{\phi _{32}}{\varphi _{23}} + {k_1}{k_3}{k_4}{\phi _{22}}{\varphi _{22}} + {k_1}{k_3}{k_4}{\phi _{23}}{\varphi _{32}} + {k_1}{k_3}{k_5}{\phi _{21}}{\varphi _{12}} + {k_1}{k_3}{k_5}{\phi _{23}}{\varphi _{32}} \\
&+ {k_1}{k_3}{k_6}{\phi _{21}}{\varphi _{12}} + {k_1}{k_3}{k_6}{\phi _{22}}{\varphi _{22}} + {k_1}{k_4}{k_5}{\phi _{23}}{\varphi _{32}} + {k_1}{k_4}{k_5}{\phi _{33}}{\varphi _{33}} + {k_1}{k_4}{k_6}{\phi _{22}}{\varphi _{22}} \\
&+ {k_1}{k_4}{k_6}{\phi _{32}}{\varphi _{23}} + {k_1}{k_5}{k_6}{\phi _{21}}{\varphi _{12}} + {k_1}{k_5}{k_6}{\phi _{31}}{\varphi _{13}} + {k_2}{k_3}{k_4}{\phi _{12}}{\varphi _{21}} + {k_2}{k_3}{k_4}{\phi _{13}}{\varphi _{31}} \\
&+ {k_2}{k_3}{k_5}{\phi _{11}}{\varphi _{11}} + {k_2}{k_3}{k_5}{\phi _{13}}{\varphi _{31}} + {k_2}{k_3}{k_6}{\phi _{11}}{\varphi _{11}} + {k_2}{k_3}{k_6}{\phi _{12}}{\varphi _{21}} + {k_2}{k_4}{k_5}{\phi _{13}}{\varphi _{31}} \\
&+ {k_2}{k_4}{k_5}{\phi _{33}}{\varphi _{33}} + {k_2}{k_4}{k_6}{\phi _{12}}{\varphi _{21}} + {k_2}{k_4}{k_6}{\phi _{32}}{\varphi _{23}} + {k_2}{k_5}{k_6}{\phi _{11}}{\varphi _{11}} + {k_2}{k_5}{k_6}{\phi _{31}}{\varphi _{13}} \\
&+ {k_3}{k_4}{k_5}{\phi _{13}}{\varphi _{31}} + {k_3}{k_4}{k_5}{\phi _{23}}{\varphi _{32}} + {k_3}{k_4}{k_6}{\phi _{12}}{\varphi _{21}} + {k_3}{k_4}{k_6}{\phi _{22}}{\varphi _{22}} + {k_3}{k_5}{k_6}{\phi _{11}}{\varphi _{11}} \\
&+ {k_3}{k_5}{k_6}{\phi _{21}}{\varphi _{12}}, \\
{c_{53}} =& {k_1}{\phi _{21}}{\phi _{32}}{\varphi _{12}}{\varphi _{23}} + {k_1}{\phi _{21}}{\phi _{33}}{\varphi _{12}}{\varphi _{33}} + {k_1}{\phi _{22}}{\phi _{31}}{\varphi _{13}}{\varphi _{22}} + {k_1}{\phi _{22}}{\phi _{33}}{\varphi _{22}}{\varphi _{33}} \\
&+ {k_1}{\phi _{23}}{\phi _{31}}{\varphi _{13}}{\varphi _{32}} + {k_1}{\phi _{23}}{\phi _{32}}{\varphi _{23}}{\varphi _{32}} + {k_2}{\phi _{11}}{\phi _{32}}{\varphi _{11}}{\varphi _{23}} + {k_2}{\phi _{11}}{\phi _{33}}{\varphi _{11}}{\varphi _{33}} \\
&+ {k_2}{\phi _{12}}{\phi _{31}}{\varphi _{13}}{\varphi _{21}} + {k_2}{\phi _{12}}{\phi _{33}}{\varphi _{21}}{\varphi _{33}} + {k_2}{\phi _{13}}{\phi _{31}}{\varphi _{13}}{\varphi _{31}} + {k_2}{\phi _{13}}{\phi _{32}}{\varphi _{23}}{\varphi _{31}} \\
&+ {k_3}{\phi _{11}}{\phi _{22}}{\varphi _{11}}{\varphi _{22}} + {k_3}{\phi _{11}}{\phi _{23}}{\varphi _{11}}{\varphi _{32}} + {k_3}{\phi _{12}}{\phi _{21}}{\varphi _{12}}{\varphi _{21}} + {k_3}{\phi _{12}}{\phi _{23}}{\varphi _{21}}{\varphi _{32}} \\
&+ {k_3}{\phi _{13}}{\phi _{21}}{\varphi _{12}}{\varphi _{31}} + {k_3}{\phi _{13}}{\phi _{22}}{\varphi _{22}}{\varphi _{31}} + {k_4}{\phi _{12}}{\phi _{23}}{\varphi _{21}}{\varphi _{32}} + {k_4}{\phi _{12}}{\phi _{33}}{\varphi _{21}}{\varphi _{33}} \\
&+ {k_4}{\phi _{13}}{\phi _{22}}{\varphi _{22}}{\varphi _{31}} + {k_4}{\phi _{13}}{\phi _{32}}{\varphi _{23}}{\varphi _{31}} + {k_4}{\phi _{22}}{\phi _{33}}{\varphi _{22}}{\varphi _{33}} + {k_4}{\phi _{23}}{\phi _{32}}{\varphi _{23}}{\varphi _{32}} \\
&+ {k_5}{\phi _{11}}{\phi _{23}}{\varphi _{11}}{\varphi _{32}} + {k_5}{\phi _{11}}{\phi _{33}}{\varphi _{11}}{\varphi _{33}} + {k_5}{\phi _{13}}{\phi _{21}}{\varphi _{12}}{\varphi _{31}} + {k_5}{\phi _{13}}{\phi _{31}}{\varphi _{13}}{\varphi _{31}} \\
&+ {k_5}{\phi _{21}}{\phi _{33}}{\varphi _{12}}{\varphi _{33}} + {k_5}{\phi _{23}}{\phi _{31}}{\varphi _{13}}{\varphi _{32}} + {k_6}{\phi _{11}}{\phi _{22}}{\varphi _{11}}{\varphi _{22}} + {k_6}{\phi _{11}}{\phi _{32}}{\varphi _{11}}{\varphi _{23}} \\
&+ {k_6}{\phi _{12}}{\phi _{21}}{\varphi _{12}}{\varphi _{21}} + {k_6}{\phi _{12}}{\phi _{31}}{\varphi _{13}}{\varphi _{21}} + {k_6}{\phi _{21}}{\phi _{32}}{\varphi _{12}}{\varphi _{23}} + {k_6}{\phi _{22}}{\phi _{31}}{\varphi _{13}}{\varphi _{22}},
\end{align*}

\begin{align*}
{c_{54}} =& {k_1}{\phi _{21}}{\phi _{32}}{\varphi _{13}}{\varphi _{22}} + {k_1}{\phi _{21}}{\phi _{33}}{\varphi _{13}}{\varphi _{32}} + {k_1}{\phi _{22}}{\phi _{31}}{\varphi _{12}}{\varphi _{23}} + {k_1}{\phi _{22}}{\phi _{33}}{\varphi _{23}}{\varphi _{32}} \\
&+ {k_1}{\phi _{23}}{\phi _{31}}{\varphi _{12}}{\varphi _{33}} + {k_1}{\phi _{23}}{\phi _{32}}{\varphi _{22}}{\varphi _{33}} + {k_2}{\phi _{11}}{\phi _{32}}{\varphi _{13}}{\varphi _{21}} + {k_2}{\phi _{11}}{\phi _{33}}{\varphi _{13}}{\varphi _{31}} \\
&+ {k_2}{\phi _{12}}{\phi _{31}}{\varphi _{11}}{\varphi _{23}} + {k_2}{\phi _{12}}{\phi _{33}}{\varphi _{23}}{\varphi _{31}} + {k_2}{\phi _{13}}{\phi _{31}}{\varphi _{11}}{\varphi _{33}} + {k_2}{\phi _{13}}{\phi _{32}}{\varphi _{21}}{\varphi _{33}} \\
&+ {k_3}{\phi _{11}}{\phi _{22}}{\varphi _{12}}{\varphi _{21}} + {k_3}{\phi _{11}}{\phi _{23}}{\varphi _{12}}{\varphi _{31}} + {k_3}{\phi _{12}}{\phi _{21}}{\varphi _{11}}{\varphi _{22}} + {k_3}{\phi _{12}}{\phi _{23}}{\varphi _{22}}{\varphi _{31}} \\
&+ {k_3}{\phi _{13}}{\phi _{21}}{\varphi _{11}}{\varphi _{32}} + {k_3}{\phi _{13}}{\phi _{22}}{\varphi _{21}}{\varphi _{32}} + {k_4}{\phi _{12}}{\phi _{23}}{\varphi _{22}}{\varphi _{31}} + {k_4}{\phi _{12}}{\phi _{33}}{\varphi _{23}}{\varphi _{31}} \\
&+ {k_4}{\phi _{13}}{\phi _{22}}{\varphi _{21}}{\varphi _{32}} + {k_4}{\phi _{13}}{\phi _{32}}{\varphi _{21}}{\varphi _{33}} + {k_4}{\phi _{22}}{\phi _{33}}{\varphi _{23}}{\varphi _{32}} + {k_4}{\phi _{23}}{\phi _{32}}{\varphi _{22}}{\varphi _{33}} \\
&+ {k_5}{\phi _{11}}{\phi _{23}}{\varphi _{12}}{\varphi _{31}} + {k_5}{\phi _{11}}{\phi _{33}}{\varphi _{13}}{\varphi _{31}} + {k_5}{\phi _{13}}{\phi _{21}}{\varphi _{11}}{\varphi _{32}} + {k_5}{\phi _{13}}{\phi _{31}}{\varphi _{11}}{\varphi _{33}} \\
&+ {k_5}{\phi _{21}}{\phi _{33}}{\varphi _{13}}{\varphi _{32}} + {k_5}{\phi _{23}}{\phi _{31}}{\varphi _{12}}{\varphi _{33}} + {k_6}{\phi _{11}}{\phi _{22}}{\varphi _{12}}{\varphi _{21}} + {k_6}{\phi _{11}}{\phi _{32}}{\varphi _{13}}{\varphi _{21}} \\
&+ {k_6}{\phi _{12}}{\phi _{21}}{\varphi _{11}}{\varphi _{22}} + {k_6}{\phi _{12}}{\phi _{31}}{\varphi _{11}}{\varphi _{23}} + {k_6}{\phi _{21}}{\phi _{32}}{\varphi _{13}}{\varphi _{22}} + {k_6}{\phi _{22}}{\phi _{31}}{\varphi _{12}}{\varphi _{23}}, \\
{c_{61}} =& {k_1}{k_2}{k_3}{k_4}{k_5}{k_6} + {\phi _{11}}{\phi _{22}}{\phi _{33}}{\varphi _{11}}{\varphi _{23}}{\varphi _{32}} + {\phi _{11}}{\phi _{22}}{\phi _{33}}{\varphi _{12}}{\varphi _{21}}{\varphi _{33}} \\
&+ {\phi _{11}}{\phi _{22}}{\phi _{33}}{\varphi _{13}}{\varphi _{22}}{\varphi _{31}} + {\phi _{11}}{\phi _{23}}{\phi _{32}}{\varphi _{11}}{\varphi _{22}}{\varphi _{33}} + {\phi _{11}}{\phi _{23}}{\phi _{32}}{\varphi _{12}}{\varphi _{23}}{\varphi _{31}} \\
&+ {\phi _{11}}{\phi _{23}}{\phi _{32}}{\varphi _{13}}{\varphi _{21}}{\varphi _{32}} + {\phi _{12}}{\phi _{21}}{\phi _{33}}{\varphi _{11}}{\varphi _{22}}{\varphi _{33}} + {\phi _{12}}{\phi _{21}}{\phi _{33}}{\varphi _{12}}{\varphi _{23}}{\varphi _{31}} \\
&+ {\phi _{12}}{\phi _{21}}{\phi _{33}}{\varphi _{13}}{\varphi _{21}}{\varphi _{32}} + {\phi _{12}}{\phi _{23}}{\phi _{31}}{\varphi _{11}}{\varphi _{23}}{\varphi _{32}} + {\phi _{12}}{\phi _{23}}{\phi _{31}}{\varphi _{12}}{\varphi _{21}}{\varphi _{33}} \\
&+ {\phi _{12}}{\phi _{23}}{\phi _{31}}{\varphi _{13}}{\varphi _{22}}{\varphi _{31}} + {\phi _{13}}{\phi _{21}}{\phi _{32}}{\varphi _{11}}{\varphi _{23}}{\varphi _{32}} + {\phi _{13}}{\phi _{21}}{\phi _{32}}{\varphi _{12}}{\varphi _{21}}{\varphi _{33}}  \\
&+ {\phi _{13}}{\phi _{21}}{\phi _{32}}{\varphi _{13}}{\varphi _{22}}{\varphi _{31}} + {\phi _{13}}{\phi _{22}}{\phi _{31}}{\varphi _{11}}{\varphi _{22}}{\varphi _{33}} + {\phi _{13}}{\phi _{22}}{\phi _{31}}{\varphi _{12}}{\varphi _{23}}{\varphi _{31}} \\
&+ {\phi _{13}}{\phi _{22}}{\phi _{31}}{\varphi _{13}}{\varphi _{21}}{\varphi _{32}}, \\
{c_{62}} =& {k_1}{k_2}{k_4}{k_5}{\phi _{33}}{\varphi _{33}} + {k_1}{k_2}{k_4}{k_6}{\phi _{32}}{\varphi _{23}} + {k_1}{k_2}{k_5}{k_6}{\phi _{31}}{\varphi _{13}} + {k_1}{k_3}{k_4}{k_5}{\phi _{23}}{\varphi _{32}} \\
&+ {k_1}{k_3}{k_4}{k_6}{\phi _{22}}{\varphi _{22}} + {k_1}{k_3}{k_5}{k_6}{\phi _{21}}{\varphi _{12}} + {k_2}{k_3}{k_4}{k_5}{\phi _{13}}{\varphi _{31}} + {k_2}{k_3}{k_4}{k_6}{\phi _{12}}{\varphi _{21}} \\
&+ {k_2}{k_3}{k_5}{k_6}{\phi _{11}}{\varphi _{11}}, \\
{c_{63}} =& {k_1}{k_4}{\phi _{22}}{\phi _{33}}{\varphi _{22}}{\varphi _{33}} + {k_1}{k_4}{\phi _{23}}{\phi _{32}}{\varphi _{23}}{\varphi _{32}} + {k_1}{k_5}{\phi _{21}}{\phi _{33}}{\varphi _{12}}{\varphi _{33}} \\
&+ {k_1}{k_5}{\phi _{23}}{\phi _{31}}{\varphi _{13}}{\varphi _{32}} + {k_1}{k_6}{\phi _{21}}{\phi _{32}}{\varphi _{12}}{\varphi _{23}} + {k_1}{k_6}{\phi _{22}}{\phi _{31}}{\varphi _{13}}{\varphi _{22}} \\
&+ {k_2}{k_4}{\phi _{12}}{\phi _{33}}{\varphi _{21}}{\varphi _{33}} + {k_2}{k_4}{\phi _{13}}{\phi _{32}}{\varphi _{23}}{\varphi _{31}} + {k_2}{k_5}{\phi _{11}}{\phi _{33}}{\varphi _{11}}{\varphi _{33}} \\
&+ {k_2}{k_5}{\phi _{13}}{\phi _{31}}{\varphi _{13}}{\varphi _{31}} + {k_2}{k_6}{\phi _{11}}{\phi _{32}}{\varphi _{11}}{\varphi _{23}} + {k_2}{k_6}{\phi _{12}}{\phi _{31}}{\varphi _{13}}{\varphi _{21}} \\
&+ {k_3}{k_4}{\phi _{12}}{\phi _{23}}{\varphi _{21}}{\varphi _{32}} + {k_3}{k_4}{\phi _{13}}{\phi _{22}}{\varphi _{22}}{\varphi _{31}} + {k_3}{k_5}{\phi _{11}}{\phi _{23}}{\varphi _{11}}{\varphi _{32}}\\
&+ {k_3}{k_5}{\phi _{13}}{\phi _{21}}{\varphi _{12}}{\varphi _{31}} + {k_3}{k_6}{\phi _{11}}{\phi _{22}}{\varphi _{11}}{\varphi _{22}} + {k_3}{k_6}{\phi _{12}}{\phi _{21}}{\varphi _{12}}{\varphi _{21}}, \\
{c_{64}} =& {k_1}{k_4}{\phi _{22}}{\phi _{33}}{\varphi _{23}}{\varphi _{32}} + {k_1}{k_4}{\phi _{23}}{\phi _{32}}{\varphi _{22}}{\varphi _{33}} + {k_1}{k_5}{\phi _{21}}{\phi _{33}}{\varphi _{13}}{\varphi _{32}} \\
&+ {k_1}{k_5}{\phi _{23}}{\phi _{31}}{\varphi _{12}}{\varphi _{33}} + {k_1}{k_6}{\phi _{21}}{\phi _{32}}{\varphi _{13}}{\varphi _{22}} + {k_1}{k_6}{\phi _{22}}{\phi _{31}}{\varphi _{12}}{\varphi _{23}} \\
&+ {k_2}{k_4}{\phi _{12}}{\phi _{33}}{\varphi _{23}}{\varphi _{31}} + {k_2}{k_4}{\phi _{13}}{\phi _{32}}{\varphi _{21}}{\varphi _{33}} + {k_2}{k_5}{\phi _{11}}{\phi _{33}}{\varphi _{13}}{\varphi _{31}} \\
&+ {k_2}{k_5}{\phi _{13}}{\phi _{31}}{\varphi _{11}}{\varphi _{33}} + {k_2}{k_6}{\phi _{11}}{\phi _{32}}{\varphi _{13}}{\varphi _{21}} + {k_2}{k_6}{\phi _{12}}{\phi _{31}}{\varphi _{11}}{\varphi _{23}} \\
&+ {k_3}{k_4}{\phi _{12}}{\phi _{23}}{\varphi _{22}}{\varphi _{31}} + {k_3}{k_4}{\phi _{13}}{\phi _{22}}{\varphi _{21}}{\varphi _{32}} + {k_3}{k_5}{\phi _{11}}{\phi _{23}}{\varphi _{12}}{\varphi _{31}} \\
&+ {k_3}{k_5}{\phi _{13}}{\phi _{21}}{\varphi _{11}}{\varphi _{32}} + {k_3}{k_6}{\phi _{11}}{\phi _{22}}{\varphi _{12}}{\varphi _{21}} + {k_3}{k_6}{\phi _{12}}{\phi _{21}}{\varphi _{11}}{\varphi _{22}}, \\
{c_{65}} =& {\phi _{11}}{\phi _{22}}{\phi _{33}}{\varphi _{11}}{\varphi _{22}}{\varphi _{33}} + {\phi _{11}}{\phi _{22}}{\phi _{33}}{\varphi _{12}}{\varphi _{23}}{\varphi _{31}} + {\phi _{11}}{\phi _{22}}{\phi _{33}}{\varphi _{13}}{\varphi _{21}}{\varphi _{32}} \\
&+ {\phi _{11}}{\phi _{23}}{\phi _{32}}{\varphi _{11}}{\varphi _{23}}{\varphi _{32}} + {\phi _{11}}{\phi _{23}}{\phi _{32}}{\varphi _{12}}{\varphi _{21}}{\varphi _{33}} + {\phi _{11}}{\phi _{23}}{\phi _{32}}{\varphi _{13}}{\varphi _{22}}{\varphi _{31}} \\
&+ {\phi _{12}}{\phi _{21}}{\phi _{33}}{\varphi _{11}}{\varphi _{23}}{\varphi _{32}} + {\phi _{12}}{\phi _{21}}{\phi _{33}}{\varphi _{12}}{\varphi _{21}}{\varphi _{33}} + {\phi _{12}}{\phi _{21}}{\phi _{33}}{\varphi _{13}}{\varphi _{22}}{\varphi _{31}} \\
&+ {\phi _{12}}{\phi _{23}}{\phi _{31}}{\varphi _{11}}{\varphi _{22}}{\varphi _{33}} + {\phi _{12}}{\phi _{23}}{\phi _{31}}{\varphi _{12}}{\varphi _{23}}{\varphi _{31}} + {\phi _{12}}{\phi _{23}}{\phi _{31}}{\varphi _{13}}{\varphi _{21}}{\varphi _{32}} \\
&+ {\phi _{13}}{\phi _{21}}{\phi _{32}}{\varphi _{11}}{\varphi _{22}}{\varphi _{33}} + {\phi _{13}}{\phi _{21}}{\phi _{32}}{\varphi _{12}}{\varphi _{23}}{\varphi _{31}} + {\phi _{13}}{\phi _{21}}{\phi _{32}}{\varphi _{13}}{\varphi _{21}}{\varphi _{32}} \\
&+ {\phi _{13}}{\phi _{22}}{\phi _{31}}{\varphi _{11}}{\varphi _{23}}{\varphi _{32}} + {\phi _{13}}{\phi _{22}}{\phi _{31}}{\varphi _{12}}{\varphi _{21}}{\varphi _{33}} + {\phi _{13}}{\phi _{22}}{\phi _{31}}{\varphi _{13}}{\varphi _{22}}{\varphi _{31}}.
\end{align*}

\section*{Appendix B}

\begin{align*}
{\Phi _1} =& {c_{11}}{\omega _0}^{5\theta  + 1}\sin \left( {{\omega _0}{\tau _0} - \frac{{5\theta \pi }}{2}} \right) + 2\left( {{c_{21}} - {c_{22}}} \right){\omega _0}^{4\theta  + 1}\sin \left( {2{\omega _0}{\tau _0} - 2\theta \pi } \right) \\
&+ 3{\omega _0}\left( {{c_{31}} - {c_{32}}} \right){\omega _0}^{3\theta  + 1}\sin \left( {3{\omega _0}{\tau _0} - \frac{{3\theta \pi }}{2}} \right) \\
&+ 4\left( {{c_{41}} + {c_{43}} - {c_{42}} - {c_{44}}} \right){\omega _0}^{2\theta  + 1}\sin \left( {4{\omega _0}{\tau _0} - \theta \pi } \right) \\
&+ 5{\omega _0}\left( {{c_{51}} + {c_{53}} - {c_{52}} - {c_{54}}} \right){\omega _0}^{\theta  + 1}\sin \left( {5{\omega _0}{\tau _0} - \frac{{\theta \pi }}{2}} \right) \\
&+ 6{\omega _0}\left( {{c_{61}} + {c_{63}} - {c_{62}} - {c_{64}} - {c_{65}}} \right)\sin( 6{\omega _0}{\tau _0}), \\
{\Phi _2} =& {c_{11}}{\omega _0}^{5\theta  + 1}\cos \left( {{\omega _0}{\tau _0} - \frac{{5\theta \pi }}{2}} \right) + 2\left( {{c_{21}} - {c_{22}}} \right){\omega _0}^{4\theta  + 1}\cos \left( {2{\omega _0}{\tau _0} - 2\theta \pi } \right) \\
&+ 3\left( {{c_{31}} - {c_{32}}} \right){\omega _0}^{3\theta  + 1}\cos \left( {3{\omega _0}{\tau _0} - \frac{{3\theta \pi }}{2}} \right) \\
&+ 4\left( {{c_{41}} + {c_{43}} - {c_{42}} - {c_{44}}} \right){\omega _0}^{2\theta  + 1}\cos \left( {4{\omega _0}{\tau _0} - \theta \pi } \right) \\
&+ 5\left( {{c_{51}} + {c_{53}} - {c_{52}} - {c_{54}}} \right){\omega _0}^{\theta  + 1}\cos \left( {5{\omega _0}{\tau _0} - \frac{{\theta \pi }}{2}} \right) \\
&+ 6\left( {{c_{61}} + {c_{63}} - {c_{62}} - {c_{64}} - {c_{65}}} \right){\omega _0}\cos( 6{\omega _0}{\tau _0}), \\
{\Psi _1} =& 6\theta {\omega _0}^{6\theta  - 1}\cos \frac{{\left( {6\theta  - 1} \right)\pi }}{2} \\
&+ {c_{11}}\cos {\omega _0}{\tau _0}\left( {5\theta {\omega _0}^{5\theta  - 1}\cos \frac{{\left( {5\theta  - 1} \right)\pi }}{2} - {\tau _0}{\omega _0}^{5\theta }\cos \frac{{5\theta \pi }}{2}} \right) \\
&+ {c_{11}}\sin {\omega _0}{\tau _0}\left( {5\theta {\omega _0}^{5\theta  - 1}\sin \frac{{\left( {5\theta  - 1} \right)\pi }}{2} - {\tau _0}{\omega _0}^{5\theta }\sin \frac{{5\theta \pi }}{2}} \right) \\
&+ \left( {{c_{21}} - {c_{22}}} \right)\cos (2{\omega _0}{\tau _0})\left( {4\theta {\omega _0}^{4\theta  - 1}\cos \frac{{\left( {4\theta  - 1} \right)\pi }}{2} - 2{\tau _0}{\omega _0}^{4\theta }\cos (2\theta \pi) } \right) \\
&+ \left( {{c_{21}} - {c_{22}}} \right)\sin (2{\omega _0}{\tau _0})\left( {4\theta {\omega _0}^{4\theta  - 1}\sin \frac{{\left( {4\theta  - 1} \right)\pi }}{2} - 2{\tau _0}{\omega _0}^{4\theta }\sin (2\theta \pi) } \right) \\
&+ \left( {{c_{31}} - {c_{32}}} \right)\cos (3{\omega _0}{\tau _0})\left( {3\theta {\omega _0}^{3\theta  - 1}\cos \frac{{\left( {3\theta  - 1} \right)\pi }}{2} - 3{\tau _0}{\omega _0}^{3\theta }\cos \frac{{3\theta \pi }}{2}} \right) \\
&+ \left( {{c_{31}} - {c_{32}}} \right)\sin (3{\omega _0}{\tau _0})\left( {3\theta {\omega _0}^{3\theta  - 1}\sin \frac{{\left( {3\theta  - 1} \right)\pi }}{2} - 3{\tau _0}{\omega _0}^{3\theta }\sin \frac{{3\theta \pi }}{2}} \right) \\
&+ \left( {{c_{41}} + {c_{43}} - {c_{42}} - {c_{44}}} \right)\cos (4{\omega _0}{\tau _0})\left( {2\theta {\omega _0}^{2\theta  - 1}\cos \frac{{\left( {2\theta  - 1} \right)\pi }}{2} - 4{\tau _0}{\omega _0}^{2\theta }\cos \theta \pi } \right) \\
&+ \left( {{c_{41}} + {c_{43}} - {c_{42}} - {c_{44}}} \right)\sin (4{\omega _0}{\tau _0})\left( {2\theta {\omega _0}^{2\theta  - 1}\sin \frac{{\left( {2\theta  - 1} \right)\pi }}{2} - 4{\tau _0}{\omega _0}^{2\theta }\sin \theta \pi } \right) \\
&+ \left( {{c_{51}} + {c_{53}} - {c_{52}} - {c_{54}}} \right)\cos (5{\omega _0}{\tau _0})\left( {\theta {\omega _0}^{\theta  - 1}\cos \frac{{\left( {\theta  - 1} \right)\pi }}{2} - 5{\tau _0}{\omega _0}^\theta \cos \frac{{\theta \pi }}{2}} \right) \\
&+ \left( {{c_{51}} + {c_{53}} - {c_{52}} - {c_{54}}} \right)\sin (5{\omega _0}{\tau _0})\left( {\theta {\omega _0}^{\theta  - 1}\sin \frac{{\left( {\theta  - 1} \right)\pi }}{2} - 5{\tau _0}{\omega _0}^\theta \sin \frac{{\theta \pi }}{2}} \right) \\
&- 6\left( {{c_{61}} + {c_{63}} - {c_{62}} - {c_{64}} - {c_{65}}} \right){\tau _0}\cos (6{\omega _0}{\tau _0}),
\end{align*}

\begin{align*}
{\Psi _2} =& 6\theta {\omega _0}^{6\theta  - 1}\sin \frac{{\left( {6\theta  - 1} \right)\pi }}{2} \\
&+ {c_{11}}\cos {\omega _0}{\tau _0}\left( {5\theta {\omega _0}^{5\theta  - 1}\sin \frac{{\left( {5\theta  - 1} \right)\pi }}{2} - {\tau _0}{\omega _0}^{5\theta }\sin \frac{{5\theta \pi }}{2}} \right) \\
&- {c_{11}}\sin {\omega _0}{\tau _0}\left( {5\theta {\omega _0}^{5\theta  - 1}\cos \frac{{\left( {5\theta  - 1} \right)\pi }}{2} - {\tau _0}{\omega _0}^{5\theta }\cos \frac{{5\theta \pi }}{2}} \right) \\
&+ \left( {{c_{21}} - {c_{22}}} \right)\cos (2{\omega _0}{\tau _0})\left( {4\theta {\omega _0}^{4\theta  - 1}\sin \frac{{\left( {4\theta  - 1} \right)\pi }}{2} - 2{\tau _0}{\omega _0}^{4\theta }\sin (2\theta \pi) } \right) \\
&- \left( {{c_{21}} - {c_{22}}} \right)\sin (2{\omega _0}{\tau _0})\left( {4\theta {\omega _0}^{4\theta  - 1}\cos \frac{{\left( {4\theta  - 1} \right)\pi }}{2} - 2{\tau _0}{\omega _0}^{4\theta }\cos (2\theta \pi) } \right) \\
&+ \left( {{c_{31}} - {c_{32}}} \right)\cos (3{\omega _0}{\tau _0})\left( {3\theta {\omega _0}^{3\theta  - 1}\sin \frac{{\left( {3\theta  - 1} \right)\pi }}{2} - 3{\tau _0}{\omega _0}^{3\theta }\sin \frac{{3\theta \pi }}{2}} \right) \\
&- \left( {{c_{31}} - {c_{32}}} \right)\sin (3{\omega _0}{\tau _0})\left( {3\theta {\omega _0}^{3\theta  - 1}\cos \frac{{\left( {3\theta  - 1} \right)\pi }}{2} - 3{\tau _0}{\omega _0}^{3\theta }\cos \frac{{3\theta \pi }}{2}} \right) \\
&+ \left( {{c_{41}} + {c_{43}} - {c_{42}} - {c_{44}}} \right)\cos (4{\omega _0}{\tau _0})\left( {2\theta {\omega _0}^{2\theta  - 1}\sin \frac{{\left( {2\theta  - 1} \right)\pi }}{2} - 4{\tau _0}{\omega _0}^{2\theta }\sin \theta \pi } \right) \\
&- \left( {{c_{41}} + {c_{43}} - {c_{42}} - {c_{44}}} \right)\sin (4{\omega _0}{\tau _0})\left( {2\theta {\omega _0}^{2\theta  - 1}\cos \frac{{\left( {2\theta  - 1} \right)\pi }}{2} - 4{\tau _0}{\omega _0}^{2\theta }\cos \theta \pi } \right) \\
&+ \left( {{c_{51}} + {c_{53}} - {c_{52}} - {c_{54}}} \right)\cos (5{\omega _0}{\tau _0})\left( {\theta {\omega _0}^{\theta  - 1}\sin \frac{{\left( {\theta  - 1} \right)\pi }}{2} - 5{\tau _0}{\omega _0}^\theta \sin \frac{{\theta \pi }}{2}} \right) \\
&- \left( {{c_{51}} + {c_{53}} - {c_{52}} - {c_{54}}} \right)\sin (5{\omega _0}{\tau _0})\left( {\theta {\omega _0}^{\theta  - 1}\cos \frac{{\left( {\theta  - 1} \right)\pi }}{2} - 5{\tau _0}{\omega _0}^\theta \cos \frac{{\theta \pi }}{2}} \right) \\
&+ 6\left( {{c_{61}} + {c_{63}} - {c_{62}} - {c_{64}} - {c_{65}}} \right){\tau _0}\sin (6{\omega _0}{\tau _0}).
\end{align*}

\section*{Appendix C}

\begin{align*}
{\Theta _1} =& 4\omega _0^*{\mathop{\rm Re}\nolimits} \left[ {{p_1}(\omega _0^*i)} \right]\sin (4\omega _0^*{\tau _4}) - 4{\omega _0}{\mathop{\rm Im}\nolimits} \left[ {{p_1}(\omega _0^*i)} \right]\cos (4\omega _0^*{\tau _4}) \\
&+ 4{\omega _0}{\mathop{\rm Re}\nolimits} \left[ {{p_3}(\omega _0^*i)} \right]\sin (4\omega _0^*{\tau _4}) + 4\omega _0^*{\mathop{\rm Im}\nolimits} \left[ {{p_3}(\omega _0^*i)} \right]\cos (4\omega _0^*{\tau _4}) + 8\omega _0^*{p_4}\sin (8\omega _0^*{\tau _4}), \\
{\Theta _2} =& 4\omega _0^*{\mathop{\rm Re}\nolimits} \left[ {{p_1}(\omega _0^*i)} \right]\cos (4\omega _0^*{\tau _4}) + 4\omega _0^*{\mathop{\rm Im}\nolimits} \left[ {{p_1}(\omega _0^*i)} \right]\sin (4\omega _0^*{\tau _4}) \\
&- 4\omega _0^*{\mathop{\rm Re}\nolimits} \left[ {{p_3}(\omega _0^*i)} \right]\cos (4\omega _0^*{\tau _4}) + 4\omega _0^*{\mathop{\rm Im}\nolimits} \left[ {{p_3}(\omega _0^*i)} \right]\sin (4\omega _0^*{\tau _4}) - 8\omega _0^*{p_4}\cos (8\omega _0^*{\tau _4}). \\
{\Upsilon _1} =& 6\theta {\omega_0^*}^{6\theta  - 1}\cos \left( {6{\omega_0^*}{\tau _1} - 4{\omega_0^*}{\tau _4} + \frac{{\left( {6\theta  - 1} \right)\pi }}{2}} \right) \\
&+ 5{c_{11}}\theta {\omega_0^*}^{5\theta  - 1}\cos \left( {5{\omega_0^*}{\tau _1} - 4{\omega_0^*}{\tau _4} + \frac{{\left( {5\theta  - 1} \right)\pi }}{2}} \right) \\
&+ 4{c_{21}}\theta {\omega_0^*}^{4\theta  - 1}\cos \left( {4{\omega_0^*}{\tau _1} - 4{\omega_0^*}{\tau _4} + \frac{{\left( {4\theta  - 1} \right)\pi }}{2}} \right) \\
&+ 3{c_{31}}\theta {\omega_0^*}^{3\theta  - 1}\cos \left( {3{\omega_0^*}{\tau _1} - 4{\omega_0^*}{\tau _4} + \frac{{\left( {3\theta  - 1} \right)\pi }}{2}} \right)
\end{align*}

\begin{align*}
&+ 2{c_{41}}\theta {\omega_0^*}^{2\theta  - 1}\cos \left( {2{\omega_0^*}{\tau _1} - 4{\omega_0^*}{\tau _4} + \frac{{\left( {2\theta  - 1} \right)\pi }}{2}} \right) \\
&+ {c_{51}}\theta {\omega_0^*}^{\theta  - 1}\cos \left( {{\omega_0^*}{\tau _1} - 4{\omega_0^*}{\tau _4} + \frac{{\left( {\theta  - 1} \right)\pi }}{2}} \right) \\
&+ 4{\tau _4}{\mathop{\rm Re}\nolimits} \left[ {{p_3}({\omega_0^*}i)} \right]\cos (4{\omega_0^*}{\tau _4}) - 4{\tau _4}{\mathop{\rm Im}\nolimits} \left[ {{p_3}({\omega_0^*}i)} \right]\sin (4{\omega_0^*}{\tau _4}) \\
&+ 2\theta \left( {{c_{43}} - {c_{44}}} \right){\omega_0^*}^{2\theta  - 1}\cos \left( {2{\omega_0^*}{\tau _1} + 4{\omega_0^*}{\tau _4} + \frac{{\left( {2\theta  - 1} \right)\pi }}{2}} \right) \\
&+ \theta \left( {{c_{53}} - {c_{54}}} \right){\omega_0^*}^{\theta  - 1}\cos \left( {{\omega_0^*}{\tau _1} + 4{\omega_0^*}{\tau _4} + \frac{{\left( {\theta  - 1} \right)\pi }}{2}} \right) \\
&+ 8{p_4}{\tau _4}\cos (8{\omega_0^*}{\tau _4}) - 4{\tau _4}{\mathop{\rm Re}\nolimits} \left[ {{p_1}({\omega_0^*}i)} \right]\cos (4{\omega_0^*}{\tau _4}) + 4{\tau _4}{\mathop{\rm Im}\nolimits} \left[ {{p_1}({\omega_0^*}i)} \right]\sin (4{\omega_0^*}{\tau _4}) \\
&- 4{c_{22}}\theta {\omega_0^*}^{4\theta  - 1}\cos \left( {4{\omega_0^*}{\tau _1} + \frac{{\left( {4\theta  - 1} \right)\pi }}{2}} \right) \\
&- 3{c_{32}}\theta {\omega_0^*}^{3\theta  - 1}\cos \left( {3{\omega_0^*}{\tau _1} + \frac{{\left( {3\theta  - 1} \right)\pi }}{2}} \right) \\
&- 2{c_{42}}\theta {\omega_0^*}^{2\theta  - 1}\cos \left( {2{\omega_0^*}{\tau _1} + \frac{{\left( {2\theta  - 1} \right)\pi }}{2}} \right) - {c_{52}}\theta {\omega_0^*}^{\theta  - 1}\cos \left( {{\omega_0^*}{\tau _1} + \frac{{\left( {\theta  - 1} \right)\pi }}{2}} \right), \\
{\Upsilon _2} =& 6\theta {\omega_0^*}^{6\theta  - 1}\sin \left( {6{\omega_0^*}{\tau _1} - 4{\omega_0^*}{\tau _4} + \frac{{\left( {6\theta  - 1} \right)\pi }}{2}} \right) \\
&+ 5{c_{11}}\theta {\omega_0^*}^{5\theta  - 1}\sin \left( {5{\omega_0^*}{\tau _1} - 4{\omega_0^*}{\tau _4} + \frac{{\left( {5\theta  - 1} \right)\pi }}{2}} \right) \\
&+ 4{c_{21}}\theta {\omega_0^*}^{4\theta  - 1}\sin \left( {4{\omega_0^*}{\tau _1} - 4{\omega_0^*}{\tau _4} + \frac{{\left( {4\theta  - 1} \right)\pi }}{2}} \right) \\
&+ 3{c_{31}}\theta {\omega_0^*}^{3\theta  - 1}\sin \left( {3{\omega_0^*}{\tau _1} - 4{\omega_0^*}{\tau _4} + \frac{{\left( {3\theta  - 1} \right)\pi }}{2}} \right) \\
&+ 2{c_{41}}\theta {\omega_0^*}^{2\theta  - 1}\sin \left( {2{\omega_0^*}{\tau _1} - 4{\omega_0^*}{\tau _4} + \frac{{\left( {2\theta  - 1} \right)\pi }}{2}} \right) \\
&+ {c_{51}}\theta {\omega_0^*}^{\theta  - 1}\sin \left( {{\omega_0^*}{\tau _1} - 4{\omega_0^*}{\tau _4} + \frac{{\left( {\theta  - 1} \right)\pi }}{2}} \right) \\
&+ 4{\tau _4}{\mathop{\rm Re}\nolimits} \left[ {{p_3}({\omega_0^*}i)} \right]\sin (4{\omega_0^*}{\tau _4}) + 4{\tau _4}{\mathop{\rm Im}\nolimits} \left[ {{p_3}({\omega_0^*}i)} \right]\cos (4{\omega_0^*}{\tau _4}) \\
&+ 2\theta \left( {{c_{43}} - {c_{44}}} \right){\omega_0^*}^{2\theta  - 1}\sin \left( {2{\omega_0^*}{\tau _1} + 4{\omega_0^*}{\tau _4} + \frac{{\left( {2\theta  - 1} \right)\pi }}{2}} \right) \\
&+ \theta \left( {{c_{53}} - {c_{54}}} \right){\omega_0^*}^{\theta  - 1}\sin \left( {{\omega_0^*}{\tau _1} + 4{\omega_0^*}{\tau _4} + \frac{{\left( {\theta  - 1} \right)\pi }}{2}} \right) \\
&+ 8{p_4}{\tau _4}\sin (8{\omega_0^*}{\tau _4}) - 4{\tau _4}{\mathop{\rm Re}\nolimits} \left[ {{p_1}({\omega_0^*}i)} \right]\sin (4{\omega_0^*}{\tau _4}) - 4{\tau _4}{\mathop{\rm Im}\nolimits} \left[ {{p_1}({\omega_0^*}i)} \right]\cos (4{\omega_0^*}{\tau _4}) \\
&- 4{c_{22}}\theta {\omega_0^*}^{4\theta  - 1}\sin \left( {4{\omega_0^*}{\tau _1} + \frac{{\left( {4\theta  - 1} \right)\pi }}{2}} \right) - 3{c_{32}}\theta {\omega_0^*}^{3\theta  - 1}\sin \left( {3{\omega_0^*}{\tau _1} + \frac{{\left( {3\theta  - 1} \right)\pi }}{2}} \right) \\
&- 2{c_{42}}\theta {\omega_0^*}^{2\theta  - 1}\sin \left( {2{\omega_0^*}{\tau _1} + \frac{{\left( {2\theta  - 1} \right)\pi }}{2}} \right) - {c_{52}}\theta {\omega_0^*}^{\theta  - 1}\sin \left( {{\omega_0^*}{\tau _1} + \frac{{\left( {\theta  - 1} \right)\pi }}{2}} \right).
\end{align*}

\end{document}